\DeclareMathOperator{\Rep}{Rep}
\begin{document}
%
%   D e f i n i t i o n s
%
%
\theoremstyle{definition}
\newtheorem{Definition}{Definition}[section]
\newtheorem*{Definitionx}{Definition}
\newtheorem{Convention}[Definition]{Convention}
\newtheorem{Construction}{Construction}[section]
\newtheorem{Example}[Definition]{Example}
\newtheorem{Examples}[Definition]{Examples}
\newtheorem{Remark}[Definition]{Remark}
\newtheorem*{Remarkx}{Remark}
\newtheorem{Remarks}[Definition]{Remarks}
\newtheorem{Fact}[Definition]{Fact}
\newtheorem{Facts}[Definition]{Facts}
\newtheorem{Caution}[Definition]{Caution}
\newtheorem{Conjecture}[Definition]{Conjecture}
\newtheorem*{Conjecturex}{Conjecture}
\newtheorem{Question}[Definition]{Question}
\newtheorem{Questions}[Definition]{Questions}
\newtheorem*{Acknowledgements}{Acknowledgements}
\newtheorem*{Organization}{Organization}
\newtheorem*{Disclaimer}{Disclaimer}
\theoremstyle{plain}
\newtheorem{Theorem}[Definition]{Theorem}
\newtheorem*{Theoremx}{Theorem}
\newtheorem{Theoremy}{Theorem}
\newtheorem{Proposition}[Definition]{Proposition}
\newtheorem*{Propositionx}{Proposition}
\newtheorem{Lemma}[Definition]{Lemma}
\newtheorem{Corollary}[Definition]{Corollary}
\newtheorem*{Corollaryx}{Corollary}
\newtheoremstyle{voiditstyle}{3pt}{3pt}{\itshape}{\parindent}%
{\bfseries}{.}{ }{\thmnote{#3}}%
\theoremstyle{voiditstyle}
\newtheorem*{VoidItalic}{}
\newtheoremstyle{voidromstyle}{3pt}{3pt}{\rm}{\parindent}%
{\bfseries}{.}{ }{\thmnote{#3}}%
\theoremstyle{voidromstyle}
\newtheorem*{VoidRoman}{}

% abgeschrieben aus The LaTeX Companion, 2nd edition,
% von Mittelback & Goossens
%
\newcommand{\prf}{\par\noindent{\sc Proof.}\quad}
\newcommand{\blowup}{\rule[-3mm]{0mm}{0mm}}
\newcommand{\cal}{\mathcal}
\newcommand{\Aff}{{\mathds{A}}}
\newcommand{\BB}{{\mathds{B}}}
\newcommand{\CC}{{\mathds{C}}}
\newcommand{\EE}{{\mathds{E}}}
\newcommand{\FF}{{\mathds{F}}}
\newcommand{\GG}{{\mathds{G}}}
\newcommand{\HH}{{\mathds{H}}}
\newcommand{\NN}{{\mathds{N}}}
\newcommand{\ZZ}{{\mathds{Z}}}
\newcommand{\PP}{{\mathds{P}}}
\newcommand{\QQ}{{\mathds{Q}}}
\newcommand{\RR}{{\mathds{R}}}
\newcommand{\Liea}{{\mathfrak a}}
\newcommand{\Lieb}{{\mathfrak b}}
\newcommand{\Lieg}{{\mathfrak g}}
\newcommand{\Liem}{{\mathfrak m}}
\newcommand{\ideala}{{\mathfrak a}}
\newcommand{\idealb}{{\mathfrak b}}
\newcommand{\idealg}{{\mathfrak g}}
\newcommand{\idealm}{{\mathfrak m}}
\newcommand{\idealn}{{\mathfrak n}}
\newcommand{\idealp}{{\mathfrak p}}
\newcommand{\idealq}{{\mathfrak q}}
\newcommand{\idealI}{{\cal I}}
\newcommand{\lin}{\sim}
\newcommand{\num}{\equiv}
\newcommand{\dual}{\ast}
\newcommand{\iso}{\cong}
\newcommand{\homeo}{\approx}
\newcommand{\mathds}[1]{{\mathbb #1}}
\newcommand{\mm}{{\mathfrak m}}
\newcommand{\pp}{{\mathfrak p}}
\newcommand{\qq}{{\mathfrak q}}
\newcommand{\rr}{{\mathfrak r}}
\newcommand{\pP}{{\mathfrak P}}
\newcommand{\qQ}{{\mathfrak Q}}
\newcommand{\rR}{{\mathfrak R}}
%
%  evtl. auch \"uber \mathbb oder \Bbb
%
\newcommand{\OO}{{\cal O}}
\newcommand{\calA}{{\cal A}}
\newcommand{\calG}{{\cal G}}
\newcommand{\calI}{{\cal I}}
\newcommand{\calM}{{\cal M}}
\newcommand{\calO}{{\cal O}}
\newcommand{\calU}{{\cal U}}
\newcommand{\calV}{{\cal V}}
\newcommand{\calH}{{\cal H}}
\newcommand{\shHom}{{\calH om}}
\newcommand{\numero}{{n$^{\rm o}\:$}}
\newcommand{\mf}[1]{\mathfrak{#1}}
\newcommand{\mc}[1]{\mathcal{#1}}
\newcommand{\into}{{\hookrightarrow}}
\newcommand{\onto}{{\twoheadrightarrow}}
\newcommand{\Spec}{\mathrm{Spec}\:}
\newcommand{\BigSpec}{{\rm\bf Spec}\:}
\newcommand{\Spf}{\mathrm{Spf}\:}
\newcommand{\Proj}{\mathrm{Proj}\:}
\newcommand{\Pic}{\mathrm{Pic }}
\newcommand{\Picloc}{\mathrm{Picloc }}
\newcommand{\Picloclocloc}{\Picloc^{\loc,\loc}}
\newcommand{\Br}{\mathrm{Br}}
\newcommand{\NS}{\mathrm{NS}}
\newcommand{\id}{\mathrm{id}}
\newcommand{\Sym}{{\mathfrak S}}
\newcommand{\Aut}{\mathrm{Aut}}
\newcommand{\Autp}{\mathrm{Aut}^p}
\newcommand{\End}{\mathrm{End}}
\newcommand{\Hom}{\mathrm{Hom}}
\newcommand{\Ext}{\mathrm{Ext}}
\newcommand{\ord}{\mathrm{ord}}
\newcommand{\Hilb}{\mathrm{Hilb}}
\newcommand{\coker}{\mathrm{coker}\,}
\newcommand{\GHilb}[1]{\mathop{#1\mathchar`-\mathrm{Hilb}}}
\newcommand{\divisor}{\mathrm{div}}
\newcommand{\Def}{\mathrm{Def}}
\newcommand{\et}{\mathrm{\acute{e}t}}
\newcommand{\loc}{\mathrm{loc}}
\newcommand{\ab}{\mathrm{ab}}
\newcommand{\pitop}{{\pi_1^{\mathrm{top}}}}
\newcommand{\pitoploc}{{\pi_{\mathrm{loc}}^{\mathrm{top}}}}
\newcommand{\piet}{{\pi_1^{\mathrm{\acute{e}t}}}}
\newcommand{\pietloc}{{\pi_{\mathrm{loc}}^{\mathrm{\acute{e}t}}}}
\newcommand{\piN}{{\pi^{\mathrm{N}}_1}}
\newcommand{\piNloc}{{\pi_{\mathrm{loc}}^{\mathrm{N}}}}
\newcommand{\piNlocab}{{\pi_{\mathrm{loc}}^{{\rm N}, {\rm ab}}}}
\newcommand{\Het}[1]{{H_{\mathrm{\acute{e}t}}^{{#1}}}}
\newcommand{\Hfl}[1]{{H_{\mathrm{fl}}^{{#1}}}}
\newcommand{\Hcris}[1]{{H_{\mathrm{cris}}^{{#1}}}}
\newcommand{\HdR}[1]{{H_{\mathrm{dR}}^{{#1}}}}
\newcommand{\hdR}[1]{{h_{\mathrm{dR}}^{{#1}}}}
\newcommand{\Torsloc}{\mathrm{Tors}_{\mathrm{loc}}}
\newcommand{\defin}[1]{{\bf #1}}
\newcommand{\oX}{\mathcal{X}}
\newcommand{\oA}{\mathcal{A}}
\newcommand{\oY}{\mathcal{Y}}
\newcommand{\calC}{{\mathcal{C}}}
\newcommand{\calD}{{\mathcal{D}}}
\newcommand{\calF}{{\mathcal{F}}}
\newcommand{\calL}{{\mathcal{L}}}
\newcommand{\calZ}{{\mathcal{Z}}}
\newcommand{\bmu}{\boldsymbol{\mu}}
\newcommand{\balpha}{\boldsymbol{\alpha}}
\newcommand{\bG}{{\mathbf{G}}}
\newcommand{\bL}{{\mathbf{L}}}
\newcommand{\bM}{{\mathbf{M}}}
\newcommand{\bW}{{\mathbf{W}}}
\newcommand{\bD}{{\mathbf{D}}}%dihedral group
\newcommand{\BD}{{\mathbf{BD}}}
\newcommand{\BT}{{\mathbf{BT}}}
\newcommand{\BI}{{\mathbf{BI}}}
\newcommand{\BO}{{\mathbf{BO}}}
\newcommand{\C}{{\mathbf{C}}}
\newcommand{\Dic}{{\mathbf{Dic}}}
\newcommand{\SL}{{\mathbf{SL}}}
\newcommand{\SU}{{\mathbf{SU}}}
\newcommand{\MC}{{\mathbf{MC}}}
\newcommand{\GL}{{\mathbf{GL}}}
\newcommand{\Tors}{{\mathbf{Tors}}}
\newcommand{\Dieu}{{\mathds{D}}}

\makeatletter
\@namedef{subjclassname@2020}{\textup{2020} Mathematics Subject Classification}
\makeatother

\title[McKay Correspondence]{A McKay Correspondence in positive characteristic}

\author{Christian Liedtke}
\address{TU M\"unchen, Department of Mathematics, Boltzmannstr. 3, 85748 Garching bei M\"unchen, Germany}
\email{christian.liedtke@tum.de}

\subjclass[2020]{14E16, 14L15, 16T05, 14J17, 13A35, 20C20}
\keywords{linearly reductive group scheme, Klein singularity, rational double point, McKay correspondence, conjugacy class, Hopf algebra.}

\begin{abstract}
We establish a McKay correspondence
for finite and linearly reductive subgroup schemes of $\SL_2$ 
in positive characteristic.
As an application, we obtain a McKay correspondence for
\emph{all} rational double point singularities in characteristic $p\geq7$.
 
We discuss linearly reductive quotient singularities and
canonical lifts over the ring of Witt vectors.
In dimension 2, we establish simultaneous resolutions of singularities of 
these canonical lifts via $G$-Hilbert schemes.
  
In the appendix, we discuss several approaches toward the notion of
\emph{conjugacy classes} for finite group schemes:
This is an ingredient in McKay correspondences, but also
of independent interest.
\end{abstract}
\setcounter{tocdepth}{1}
\maketitle
\tableofcontents
\section{Introduction}

\subsection{Klein's classification and McKay's correspondence} 
Felix Klein \cite{Klein} classified
finite subgroups $G$ of $\SL_2(\CC)$:
up to conjugation, there are two infinite series and three isolated cases.

\begin{enumerate}
\item The associated quotient singularity $\CC^2/G$ is called a 
 \emph{Klein singularity} and the singularities arising this way
 are precisely the \emph{rational double point singularities}.
 Its minimal resolution of singularities is a union
 of $\PP^1$'s, whose dual intersection graph $\Gamma$ 
 is a simply-laced Dynkin diagram of finite type, that is,
 of type $A_n$, $D_n$, $E_6$, $E_7$ or $E_8$.
\item John McKay \cite{McKay} associated a finite graph $\widehat{\Gamma}$ %$:=\Gamma(G,\rho,\{\rho_i\}_i)$ 
 to $G \subset\SL_2(\CC)$, whose vertices correspond to the isomorphism classes of the 
 simple representations of $G$.
 This graph is a Dynkin diagram of affine type
 $\widehat{A}_n$, $\widehat{D}_n$, $\widehat{E}_6$, $\widehat{E}_7$ or $\widehat{E}_8$.
\item After these preparations, the \emph{classical McKay correspondence} consists of the 
following observations:
\begin{enumerate}
\item The graph $\Gamma$ is obtained from $\widehat{\Gamma}$ by removing the vertex corresponding to the 
trivial representation.
\item There exists a bijection between 
conjugacy classes of $G$, vertices of $\widehat{\Gamma}$, and isomorphism classes of simple representations of $G$.
\item There exists a bijection between
finite subgroups of $\SL_2(\CC)$ up to conjugacy, the above Dynkin diagrams of affine type,
Klein singularities, and the above Dynkin diagrams of finite type. 
\end{enumerate}
\end{enumerate}

By now, there are various approaches to and a vast literature on this subject, such as 
\cite{Knorrer, Kostant, McKay, McKay2, Steinberg} and many more.
Also, there are now generalisations into very
different directions: 
higher dimensional algebraic geometry \cite{Reid}, K-theory \cite{GSV},
derived categories of coherent sheaves \cite{Bridgeland,KV}, 
representations of quivers \cite{Kirillov}, non-commutative geometry and Hopf algebras \cite{Chan},
and string theory \cite{Dirichlet} - just to mention a few.

\subsection{Positive characteristic}
Now, let $k$ be an algebraically closed field of characteristic $p>0$.

\subsubsection{Wild McKay correspondence}
The classical McKay correspondence as sketched above is still partially 
available over $k$ if $G$ is assumed to be a finite subgroup of $\SL_2(k)$
of order prime to $p$ - this is the \emph{tame} case.
If $p$ divides the order of $G$ - this is the \emph{modular} or \emph{wild} case -
then this correspondence breaks down.
We refer to Yasuda's surveys \cite{Yasuda, YasudaOpen} about conjectures
and partial results concerning such wild McKay correspondences.

\subsubsection{Linearly reductive McKay correspondence}
In this article, we show that if $G$ is a finite and
\emph{linearly reductive} subgroup scheme of $\SL_{2,k}$, 
then there is a reasonable version of the classical McKay correspondence.
For example, we obtain a McKay correspondence for \emph{all} rational double point 
singularities if $p\geq7$.
Instead of considering groups, we allow non-reduced 
group schemes over $k$, but we require their categories of $k$-linear and finite-dimensional
representations to be semi-simple.
We refer to \cite{Yasuda2} about conjectures and partial results concerning
a McKay correspondence for the group scheme $\balpha_p$, which is not linearly reductive.

Thus, let $G$ be a finite and linearly reductive subgroup scheme of $\SL_{2,k}$ 
with $p\geq7$.
(In this introduction, we will exclude small characteristics whenever this is makes
our discussion easier.)
Let $x\in X:=U/G$ with $U=\Aff_k^2$ or $U=\widehat{\Aff}_k^2$
be the associated Klein singularity, 
which is a rational double point.
One goal of this article is to define a notion of conjugacy class for $G$,
to construct graphs $\Gamma$ and $\widehat{\Gamma}$, and  to 
establish bijections as above.

Let us make three comments:
\begin{enumerate}
\item It is interesting in its own that a McKay correspondence can be extended 
from finite group schemes of length prime to $p$, that is, the tame case, 
to linearly reductive group schemes.
\item What makes this linearly reductive McKay correspondence 
really interesting is that the bijection in Theorem \ref{thm: main intro} is \emph{not}
true when considering finite groups of order prime to $p$ only, 
see Example \ref{sec: mup intro}.
\item Probably, many more aspects of the classical McKay correspondence 
can be carried over to the linearly reductive setting, but rather than writing a whole
monograph, we decided to establish only some basic bijections.
\end{enumerate}

\subsection{Linearly reductive group schemes}
Let $G$ be a \emph{finite and linearly reductive group scheme} 
over an algebraically closed field $k$ of characteristic $p\geq0$.
By definition, this means that the category $\mathrm{Rep}_k(G)$ of
$k$-linear and finite-dimensional representations of $G$ is semi-simple.

If $p=0$, then every finite group scheme over $k$ is \'etale and linearly reductive.
and in fact, it is
the constant group scheme associated to a finite group.
In fact, the functor $G\mapsto G_{\mathrm{abs}}:=G(k)$ induces an equivalence 
of categories between finite group schemes over $k$ and finite groups.

If $p>0$, then every linearly reductive group scheme admits a 
canonical semi-direct product decomposition
$$
  G\,\cong\,G^\circ\rtimes G^{\et},
$$
where $G^\et$ is a group scheme of length prime to $p$ (and thus, the constant 
group scheme associated to a finite group of order prime to $p$) and where $G^\circ$
is infinitesimal and diagonalisable.
The latter implies that $G^\circ$ is a product of group schemes of the form
$\bmu_{p^n}$.
Conversely, every such semi-direct product of a diagonalisable group scheme
with the constant group scheme associated to a finite group of order prime to $p$
is linearly reductive.
This structure result is usually attributed to Nagata \cite{Nagata61},
but see also \cite{AOV}, \cite{Chin}, and \cite{Hashimoto}.
In particular, the class of finite and linearly reductive group schemes over $k$
strictly contains the class of constant group schemes associated 
to finite groups of order prime to $p$.

\subsubsection{Abstract groups}
Associated to $G$, there is an abstract finite group $G_{\rm abs}$
and we refer for the slightly technical definition to Section \ref{subsec: abstract groups}.
The order of $G_{\rm abs}$ is equal to the length of $G$.
For example, if $G$ is \'etale over $k$, then we have
$G_{\rm abs}\cong G(k)$.
The assignment $G\mapsto G_{\mathrm{abs}}$ establishes an equivalence
of categories 
\begin{equation}
\label{eq: catequivalence}
\left\{ \begin{array}{l}
\mbox{finite linearly reductive}\\ \mbox{group schemes over $k$} 
\end{array}\right\}
\quad\leftrightarrow\quad
\left\{ \begin{array}{l}
\mbox{finite groups with a}\\ \mbox{unique $p$-Sylow subgroup} 
\end{array}\right\},
\end{equation}
see \cite{LMM} and Lemma \ref{lem: lrequivalence}.

\subsubsection{Canonical lifts}
In \cite{LMM}, we showed that if $G$ is a finite and linearly
reductive group scheme, then there exists a lift of $G$ over the
ring of Witt vectors $W(k)$.
We note that $G^\circ$ and $G^\et$ even lift uniquely to $W(k)$
and we define the \emph{canonical lift} $G_{\rm can}\to \Spec K$ of $G$ to
be the unique lift that is a semi-direct product of the lifts
of $G^\circ$ and $G^{\et}$. 
Any lift of $G$ to some extension field of $K$ becomes
isomorphic to $G_{\mathrm{can}}$ after possibly passing to some
further field extension.
Moreover, the finite group $G_{\mathrm{can}}(\overline{K})$ 
is isomorphic to $G_{\rm abs}$.

\subsubsection{Representation theory}
\label{subsubsec: rep thy}
By \cite{LMM} and Proposition \ref{prop: liftingrepresentation},
there exist canonical equivalences of representation categories 
\begin{equation}
\label{eq: rep thy intro}
   \mathrm{Rep}_{k}(G) \,\to\, \mathrm{Rep}_{\overline{K}}(G_{\mathrm{can},\overline{K}})
   \,\to\, \mathrm{Rep}_\CC(G_{\mathrm{abs}})
\end{equation}
that are compatible with degrees, direct sums, tensor products, duals, 
and simplicity.
These equivalences induce isomorphisms of rings
$$
  K_k(G) \,\to\, K_{\overline{K}}(G_{\mathrm{can},\overline{K}}) \,\to\,
  K_{\CC}(G_{\mathrm{abs}}),
$$
see Corollary \ref{cor: Ktheory}.
Here, $K_F(G)$ denotes the K-group associated to 
$F$-linear and finite-dimensional $G$-representations.

\subsubsection{Hopf algebras}
If $G$ is a finite group scheme over $k$, then the multiplication map
turns $H^0(G,\OO_G)$ into a finite-dimensional \emph{Hopf algebra}
over $k$.
We discuss finite group schemes and among them the linearly reductive
ones from the point of Hopf algebras in 
Appendix \ref{sec: Hopf}.

\subsection{Linearly reductive subgroup schemes of $\SL_{2,k}$}
Let $k$ be an algebraically closed field of characteristic $p\geq0$.
Hashimoto \cite{Hashimoto} extended Klein's classification \cite{Klein}
of finite subgroups of $\SL_2(\CC)$ up to conjugation to the setting
of finite and linearly reductive subgroup schemes of $\SL_{2,k}$.
If $p\geq7$, then one obtains a list analogous to Klein's classical 
list.
If $p\in\{2,3,5\}$, then some classical cases are missing, 
but there are no new cases.

\subsection{McKay graph and McKay correspondence}
\label{subsec: McKay graph intro}
Let $G$ be a finite and linearly reductive subgroup scheme of
$\SL_{2,k}$.
As in McKay's original construction \cite{McKay}, we associate
an affine Dynkin diagram $\widehat{\Gamma}$ to $G$,
its embedding into $\SL_{2,k}$, and the set of isomorphism classes
of simple representations of $G$.
This is the \emph{McKay graph} associated to this data.
In fact, we will see that it is compatible with the equivalences
induced by \eqref{eq: catequivalence} and \eqref{eq: rep thy intro}
and we refer to Section \ref{subsec: McKay graph} for details.
We establish the following version of McKay's theorem \cite{McKay}
in positive characteristic.

\begin{Theorem}[Theorem \ref{thm: main}]
\label{thm: main intro}
 Let $k$ be an algebraically closed field of characteristic $p\geq0$.
 There exists a bijection between
 non-trivial, finite, and linearly reductive subgroup schemes of
 $\SL_{2,k}$ up to conjugation and affine Dynkin graphs
 of type
 $$
  \begin{array}{lcl}
   \widehat{A}_n, \widehat{D}_n, \widehat{E}_6, \widehat{E}_7, \widehat{E}_8
       &\mbox{\quad if \quad }&    p=0\mbox{\quad or \quad}p\geq7, \\
   \widehat{A}_n, \widehat{D}_n, \widehat{E}_6, \widehat{E}_7 &\mbox{\quad if \quad }& p=5,\\
  \widehat{A}_n, \widehat{D}_n &\mbox{\quad if \quad }& p=3, \\
  \widehat{A}_n &\mbox{\quad if \quad }&p=2.
  \end{array}
 $$
\end{Theorem}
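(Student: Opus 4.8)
The plan is to reduce the statement to the classical McKay correspondence over $\CC$ by means of the canonical lift and the representation-category equivalences of \eqref{eq: rep thy intro}, and then to read off the precise list of admissible diagrams in each characteristic from Hashimoto's classification.

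First I would fix a non-trivial, finite, and linearly reductive $G\subset\SL_{2,k}$ and let $V$ denote the two-dimensional standard representation coming from the embedding. The McKay graph $\widehat{\Gamma}$ has the isomorphism classes of simple $G$-representations as its vertices, the number of edges joining $W_i$ and $W_j$ being the multiplicity of $W_j$ in $W_i\otimes V$. The condition $G\subset\SL_{2,k}$ amounts to saying that $V$ is faithful and that $\wedge^2 V\cong k$ is trivial. Under the chain of equivalences in \eqref{eq: rep thy intro}, which is compatible with degrees, direct sums, tensor products, duals, and simplicity, the representation $V$ is carried to a two-dimensional representation $V_\CC$ of $G_{\mathrm{abs}}$. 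Both faithfulness and the triviality of $\wedge^2$ are tensor-categorical conditions --- a representation of a finite linearly reductive group scheme is faithful exactly when it generates $\mathrm{Rep}_k(G)$ as a tensor category, by Tannakian reconstruction --- so $V_\CC$ is again faithful with trivial determinant, and hence realises $G_{\mathrm{abs}}$ as a finite subgroup of $\SL_2(\CC)$.

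The same compatibilities show that the multiplicities of $W_j$ in $W_i\otimes V$ are preserved, so that $\widehat{\Gamma}$ is identified with the classical McKay graph of $G_{\mathrm{abs}}\subset\SL_2(\CC)$ built from $V_\CC$. By McKay's theorem over $\CC$, this graph is an affine Dynkin diagram of type $\widehat{A}_n$, $\widehat{D}_n$, $\widehat{E}_6$, $\widehat{E}_7$, or $\widehat{E}_8$, and the assignment is a bijection between finite subgroups of $\SL_2(\CC)$ up to conjugacy and these diagrams. It therefore remains to determine the image of the passage $G\mapsto G_{\mathrm{abs}}$ on conjugacy classes of linearly reductive subgroup schemes.

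For this final step I would invoke Hashimoto's classification \cite{Hashimoto} of finite and linearly reductive subgroup schemes of $\SL_{2,k}$ up to conjugation. Through the canonical lift this classification matches, case by case, the entries of Klein's classical list, so that $G\mapsto G_{\mathrm{abs}}$ is injective on conjugacy classes and its image consists of precisely those classical subgroups that admit a linearly reductive model in characteristic $p$. For $p=0$ or $p\geq7$ every classical case occurs and one recovers all five families; for $p\in\{5,3,2\}$ Hashimoto's list records exactly which families drop out, giving the stated restrictions. Composing with the classical bijection then yields the bijection claimed. I expect the genuine obstacle to lie in this last matching: showing that the conjugacy classification of linearly reductive subgroup schemes over $k$ runs parallel to Klein's list in large characteristic, and pinning down which Dynkin types survive as $p$ decreases, is where the real content of the positive-characteristic refinement resides, whereas the reduction to $\CC$ in the earlier steps is essentially formal once the tensor-compatibility of \eqref{eq: rep thy intro} is available.
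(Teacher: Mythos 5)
Your proposal is correct and follows essentially the same route as the paper: reduce to the classical McKay correspondence over $\CC$ via the canonical lift and the tensor-compatible equivalences of representation categories (the paper packages this as Proposition \ref{prop: liftingrepresentation} and Proposition \ref{prop: McKay Quiver lift}), and then read off the characteristic-dependent list of affine Dynkin types from Hashimoto's classification \cite[Theorem 3.8]{Hashimoto}. You also correctly locate the real content in the final matching against Hashimoto's list, which is exactly where the paper's (very terse) proof places it.
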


By construction, this bijection is compatible with the classical McKay correspondence via
the lifting results and the equivalences \eqref{eq: catequivalence},
\eqref{eq: rep thy intro}.

\begin{Example}
\label{sec: mup intro}
The linearly reductive group scheme corresponding to 
$\widehat{A}_n$ is $\bmu_{n+1}$.
This group scheme is reduced, that is, \'etale, if and only if
$p$ does not divide $n+1$.
In particular, it is crucial to allow non-reduced group schemes
in order to obtain a bijection as in characteristic zero.
\end{Example}

\subsection{Linearly reductive quotient singularities}
Consider $\GL_{2,k}$ with its usual linear action on 
$U=\Aff^2_k$ or $U=\widehat{\Aff}_k^2$.
If $G$ is a finite, linearly reductive, and \emph{very small}
(see Definition \ref{def: very small}) subgroup scheme of $\GL_{2,k}$, then the associated
quotient singularity $x\in X:=U/G$ is a 
two-dimensional \emph{linearly reductive quotient singularity} 
in the sense of \cite{LMM}.
By loc.cit., such a singularity determines 
$G$ together with its embedding $G\to\GL_{2,k}$ up to isomorphism and
conjugation, respectively.

In Section \ref{subsec: Ishii Ito Nakamura}, we will see 
that a minimal resolution of singularities of a two-dimensional linearly reductive
quotient singularity $x\in X=U/G$
is provided by the $G$-Hilbert scheme
\begin{equation}
\label{eq: intro IIN}
 \pi \,:\, \GHilb{G}(U) \,\to\, U/G,
\end{equation}
which generalises work of Ishii, Ito, and Nakamura 
\cite{Ishii,IN}.

\begin{Remark}
In dimension two, a linearly reductive quotient singularity
is the same as an F-regular singularity \cite{LMM}.
Thus, every two-dimensional F-regular singularity can be 
resolved by a suitable $G$-Hilbert scheme, see 
also Remark \ref{rem: f-regular}.
\end{Remark}

If moreover $G$ is a subgroup scheme of $\SL_{2,k}$, 
then $x\in X=U/G$ is called a \emph{Klein singularity}.
Klein singularities are rational double point singularities.
If $p=0$ or $p\geq7$, then conversely every rational double point 
is a Klein singularity by Hashimoto \cite{Hashimoto} and, independently, by 
\cite{LiedtkeSatriano}.
If $p\in\{2,3,5\}$, then not every rational double point 
is a Klein singularity.

\subsection{Canonical lifts and simultaneous resolutions}
Let $G$ be a very small, finite, and linearly reductive subgroup scheme of $\GL_{2,k}$ 
and let $x\in X=U/G$ be the associated linearly reductive quotient singularity.
Assume $p>0$ and let $W(k)$ be the ring of Witt vectors of $k$.
In Section \ref{subsec: canonical lift singularity}, 
we will establish the existence of a 
\emph{canonical lift} 
$$
 \mathcal{X}_{\mathrm{can}}\,\to\,\Spec W(k)
$$
of  $x\in X=U/G$.
Using $G$-Hilbert schemes in families, we 
will see in Section \ref{subsec: simultaneous resolution} 
that it admits a simultaneous and minimal resolution of singularities
$$
 \widetilde{\pi} \,:\, \mathcal{Y}\,\to\,\mathcal{X}_{\mathrm{can}}\,\to\,\Spec W(k).
$$
We will prove this resolution to be unique, see 
Theorem \ref{thm: IN}.

\subsection{McKay correspondence for Klein singularities}
Let $G$ be a finite and linearly reductive subgroup scheme of $\SL_{2,k}$ 
and let $x\in X=U/G$ be the associated Klein singularity.

As discussed in Section \ref{subsec: McKay graph intro}, we have
the McKay graph $\widehat{\Gamma}$ associated to 
$G$, its embedding into $\SL_{2,k}$, and the
set of isomorphism classes of simple representations of $G$.

Let $\pi:Y\to X$ be a minimal resolution of singularities.
Since $x\in X$ is a rational double point, the exceptional divisor $\mathrm{Exc}(\pi)$ 
of $\pi$  is a configuration of $\PP^1$'s, whose
dual intersection graph $\Gamma$ is a simply-laced Dynkin diagram.

\begin{Theorem}[Theorem \ref{thm: bijection simple and exceptional}]
\label{thm: bijection simple intro}
 Let $k$ be an algebraically closed field of characteristic $p\geq0$.
 Let $G$ be a finite and linearly reductive subgroup scheme of $\SL_{2,k}$
 and let $x\in X=U/G$ be the associated Klein singularity.
 Then, there exists a natural bijection of the graph $\Gamma$ 
 with the graph obtained from $\widehat{\Gamma}$ by removing the vertex
 corresponding to the trivial representation.
 \end{Theorem}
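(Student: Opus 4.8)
The plan is to establish the bijection by first lifting the whole situation to characteristic zero, where the classical McKay correspondence holds, and then transporting the combinatorial conclusion back. Concretely, I would use the canonical lift $G_{\mathrm{can}}\to\Spec W(k)$ together with the equivalences of representation categories in \eqref{eq: rep thy intro}. These equivalences are compatible with degrees, direct sums, tensor products, duals, and simplicity, and in particular they identify the simple representation corresponding to the embedding $G\into\SL_{2,k}$ (the ``fundamental'' two-dimensional representation $\rho$) on all three sides. Since the McKay graph $\widehat{\Gamma}$ is defined purely in terms of the decomposition of $V_i\otimes\rho$ into simples, where the $V_i$ range over the isomorphism classes of simple representations, these equivalences show that the McKay graph of $G$ over $k$ coincides with the McKay graph of $G_{\mathrm{abs}}$ over $\CC$. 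Thus the entire left-hand side of the desired bijection reduces to the classical setting.

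Next I would handle the graph $\Gamma$, the dual intersection graph of the exceptional divisor of the minimal resolution $\pi:Y\to X$. Here the key input is the canonical lift of the singularity: by the discussion preceding Theorem \ref{thm: IN}, the Klein singularity $x\in X=U/G$ admits a canonical lift $\mathcal{X}_{\mathrm{can}}\to\Spec W(k)$ together with a simultaneous and minimal resolution $\widetilde{\pi}:\mathcal{Y}\to\mathcal{X}_{\mathrm{can}}$. Simultaneity means that the special and generic fibers of $\widetilde{\pi}$ have the same exceptional configuration, so the dual intersection graph $\Gamma$ attached to $\pi:Y\to X$ in characteristic $p$ agrees with the dual intersection graph attached to the minimal resolution of the generic fiber, which is a Klein singularity in characteristic zero (the quotient $\CC^2/G_{\mathrm{abs}}$ after base change to $\overline{K}$ and then to $\CC$). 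Consequently $\Gamma$ is identified with the finite-type Dynkin diagram $\Gamma$ of the corresponding classical subgroup of $\SL_2(\CC)$.

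Having reduced both $\Gamma$ and $\widehat{\Gamma}$ to their characteristic-zero counterparts attached to $G_{\mathrm{abs}}\subset\SL_2(\CC)$, the statement becomes exactly part (3)(a) of the classical McKay correspondence recalled in the introduction: the finite-type graph $\Gamma$ is obtained from the affine graph $\widehat{\Gamma}$ by deleting the vertex of the trivial representation. I would therefore simply invoke the classical result, observing that under the equivalence \eqref{eq: rep thy intro} the trivial representation of $G$ corresponds to the trivial representation of $G_{\mathrm{abs}}$, so that ``removing the trivial vertex'' is a compatible operation on both sides. The naturality claimed in the statement then follows because every identification used is canonical: the equivalences of representation categories are canonical, and the simultaneous resolution is unique by Theorem \ref{thm: IN}.

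The main obstacle I anticipate is verifying the compatibility of the two lifting procedures, that is, checking that the exceptional configuration of the minimal resolution in characteristic $p$ really does match the classical one via the \emph{same} canonical lift that governs the representation theory. In the tame or classical settings this matching is standard, but in the present mixed characteristic situation one must ensure that the $G$-Hilbert scheme $G\mathrm{-Hilb}(U)$ computing the resolution is compatible in families with $G_{\mathrm{can}}\mathrm{-Hilb}$ over $W(k)$, and that no exceptional curves are contracted or acquire different self-intersections in special fiber. This is precisely the content secured by the simultaneous resolution theorem, so once that is in place the argument is essentially a bookkeeping exercise transporting the classical bijection along canonical identifications.
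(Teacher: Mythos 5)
Your proposal is correct and coincides with the paper's own (second) proof of Theorem \ref{thm: bijection simple and exceptional}: lift $G$ to $G_{\mathrm{can}}$ and identify representation theory via Proposition \ref{prop: liftingrepresentation}, use the canonical lift of the singularity and the simultaneous minimal resolution of Theorem \ref{thm: IN} to identify the exceptional configurations of $\widetilde{\pi}_k$ and $\widetilde{\pi}_{\overline{K}}$, and then invoke the classical Ito--Nakamura/Nakajima result over $\CC$ via the Lefschetz principle. The paper also records an alternative first proof (carrying the case-by-case analysis of Ito--Nakamura directly over to linearly reductive subgroup schemes), but your lifting argument is exactly its second proof.
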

 
 Since every rational double point in characteristic $p\geq7$ 
 is a Klein singularity, we obtain the following.

\begin{Corollary}
There exists a linearly reductive McKay correspondence
for rational double point singularities in every characteristic $p\geq7$.
\end{Corollary}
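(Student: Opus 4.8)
The plan is to reduce the statement to the classical McKay correspondence over $\CC$ by transporting both graphs to characteristic zero through the canonical lift, and then to match them by invoking the classical result. On the representation side, the vertices of $\widehat{\Gamma}$ are the isomorphism classes of simple representations of $G$, and its edges record the multiplicities obtained by tensoring with the standard $2$-dimensional representation coming from the embedding $G\to\SL_{2,k}$. By the chain of equivalences \eqref{eq: rep thy intro}, which is compatible with degrees, direct sums, tensor products, duals and simplicity, this data is carried isomorphically to the corresponding data for the complex group $G_{\mathrm{abs}}\subset\SL_2(\CC)$. Here one uses that the equivalences respect the chosen embedding into $\SL_2$ (see Section \ref{subsec: McKay graph}), so that not merely the vertex set but the entire McKay-graph structure is preserved; in particular $\widehat{\Gamma}(G)$ and $\widehat{\Gamma}(G_{\mathrm{abs}})$ agree as graphs, with the trivial representation matched to the trivial representation.

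On the geometric side I would pass to the canonical lift. Let $\mathcal{X}_{\mathrm{can}}\to\Spec W(k)$ be the canonical lift of $x\in X=U/G$ and let $\widetilde{\pi}:\mathcal{Y}\to\mathcal{X}_{\mathrm{can}}$ be the simultaneous minimal resolution provided by $G$-Hilbert schemes in families (Theorem \ref{thm: IN}). Its special fibre is the minimal resolution $\pi:Y\to X$ over $k$, whose exceptional configuration has dual graph $\Gamma$; its geometric generic fibre is the minimal resolution of the Klein singularity attached to $G_{\mathrm{abs}}\subset\SL_2(\overline{K})$ over the characteristic-zero field $\overline{K}=\overline{\Frac W(k)}$, whose dual graph is the classical Dynkin diagram $\Gamma_{\mathrm{abs}}$. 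Because the resolution is simultaneous, the relative exceptional locus is a smooth family of $\PP^1$-configurations over $\Spec W(k)$ with constant dual graph, so $\Gamma\cong\Gamma_{\mathrm{abs}}$; moreover the irreducible components of the special fibre are the specialisations of those of the generic fibre, so this identification is canonical and not merely abstract.

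Combining the two identifications, it remains to apply the classical McKay correspondence over $\CC$, namely item (3)(a) of the introduction: the Dynkin diagram $\Gamma_{\mathrm{abs}}$ is obtained from the affine diagram $\widehat{\Gamma}(G_{\mathrm{abs}})$ by deleting the vertex of the trivial representation. Transporting back along the two isomorphisms yields the asserted natural bijection of $\Gamma$ with $\widehat{\Gamma}(G)$ minus the trivial vertex, uniformly in $p\geq0$, since both the canonical lift and the equivalences \eqref{eq: rep thy intro} exist in every characteristic and the short lists of Theorem \ref{thm: main intro} are accommodated automatically.

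The main obstacle I anticipate is verifying that the two transports are compatible at the level of the correspondence itself, not merely at the level of the two abstract graphs: one must check that the labelling of the exceptional curves of $Y$ induced by the $G$-Hilbert construction, say via the first Chern classes of the tautological sheaves $\mathcal{R}_\rho$ attached to the simple representations $\rho\in\Rep_k(G)$, is dual to the McKay-graph labelling, and that this pairing is preserved under specialisation from the generic to the special fibre of $\mathcal{Y}\to\Spec W(k)$. Concretely one wants $c_1(\mathcal{R}_\rho)\cdot E_{\rho'}=\delta_{\rho\rho'}$ for nontrivial simple $\rho,\rho'$, with the tautological sheaves and the intersection pairing interpolating flatly across the family; establishing this duality in the linearly reductive positive-characteristic setting, where the semisimplicity of $\Rep_k(G)$ replaces Maschke's theorem, is the technical heart of the argument. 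An alternative and more intrinsic route avoids the lift altogether and runs this tautological-sheaf computation directly on $G\text{-Hilb}(U)$, at the cost of redoing the classical Gonz\'alez-Sprinberg--Verdier/Ito--Nakamura intersection calculation over $k$.
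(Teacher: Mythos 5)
Your argument, as written, proves the McKay correspondence for \emph{Klein singularities}, i.e.\ for singularities of the form $x\in X=U/G$ with $G$ a finite linearly reductive subgroup scheme of $\SL_{2,k}$ --- and for that statement your route via the canonical lift, the simultaneous resolution $\widetilde{\pi}:\mathcal{Y}\to\mathcal{X}_{\mathrm{can}}\to\Spec W(k)$ of Theorem \ref{thm: IN}, and the Lefschetz principle is essentially the paper's own second proof of Theorem \ref{thm: bijection simple and exceptional}, including your correct identification of the Hecke-correspondence/tautological-sheaf computation as the technical heart. But the Corollary asserts the correspondence for \emph{all} rational double point singularities in characteristic $p\geq7$, and nowhere do you address why an arbitrary rational double point in that range is of the form $U/G$ at all. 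That implication is the entire content of the Corollary beyond Theorem \ref{thm: bijection simple intro}: it is Theorem \ref{thm: classification Klein}(1) (due to Hashimoto and, independently, Liedtke--Satriano), and it is also the only place where the hypothesis $p\geq7$ enters --- your closing remark that the argument works ``uniformly in $p\geq0$'' is a symptom of the gap, since for $p\in\{2,3,5\}$ there exist rational double points that are not Klein singularities and to which no group scheme $G$, hence no McKay graph $\widehat{\Gamma}$, is attached.

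So the fix is one sentence, but it is a sentence you must supply: invoke the classification result that in characteristic $p\geq7$ (equivalently, that F-regularity is automatic for rational double points in this range) every rational double point is analytically isomorphic to $U/G$ for a finite linearly reductive $G\subset\SL_{2,k}$, unique up to conjugacy by \cite[Theorem 8.1]{LMM}; only then does your lifting argument apply to the given singularity.
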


To establish this theorem, we use the Ishii-Ito-Nakamura resolution
of singularities \eqref{eq: intro IIN}, as well as Hecke correspondences as in the work
of Ito and Nakamura \cite{IN} and Nakajima \cite{Nakajima, NakajimaLectures}.

\subsection{Generalisations and variants}
Let $G$ be a very small, finite,
and linearly reductive subgroup scheme of $\GL_{2,k}$ 
and let $x\in X=U/G$ be the associated two-dimensional linearly
reductive quotient singularity.
Let $\pi:Y\to X$ be the minimal resolution of singularities and 
let $\mathrm{Exc}(\pi)$ be the exceptional divisor of $\pi$.
\begin{enumerate}
\item In Theorem \ref{thm: exc pi}, 
we associate a representation of $G$ to each point of $\mathrm{Exc}(\pi)$.
This generalises results of Ishii and Nakamura \cite{IshiiCrelle, Ishii}.
\item In Theorem \ref{thm: reflexive}, we establish a bijection between 
the components of $\textrm{Exc}(\pi)$ and reflexive 
$\calO_X$-modules.
Probably, this result should be viewed as a theorem on two-dimensional F-regular
singularities, see Remark \ref{rem: f-regular Artin Verdier}.
It generalises work of Artin and Verdier \cite{AV}, 
Wunram \cite{Wunram}, and Ishii and Nakamura \cite{Ishii}.
\item In Theorem \ref{thm: derived}, we establish an equivalence of derived
categories of coherent sheaves $\calD(Y)$ and 
$\calD^G(U)$ ($G$-equivariant 
sheaves on $U$).
This generalises work of  Kapranov and Vasserot \cite{KV},
Bridgeland, King, and Reid \cite{Bridgeland},
and Ishii, Nakamura, and Ueda \cite{IshiiCrelle,Ishii, IshiiUeda}.
Of course, these articles themselves generalise work of Gonzalez-Sprinberg and Verdier 
\cite{GSV} from K-theory to derived categories of coherent sheaves.
\end{enumerate}

\subsection{Ito-Reid correspondence}
If $G$ is a finite subgroup of $\SL_2(\CC)$, then 
Ito and Reid \cite{ItoReid} found a natural bijection,
the \emph{Ito-Reid correspondence},
between the conjugacy classes of $G$ and the vertices 
of the McKay graph $\widehat{\Gamma}$.

\begin{Theorem}
\label{thm: ito reid intro}
 Let $k$ be an algebraically closed field of characteristic $p\geq0$.
 There exists an Ito-Reid correspondence for finite and linearly reductive
 subgroup schemes of $\SL_{2,k}$.
\end{Theorem}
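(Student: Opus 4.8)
The plan is to reduce the statement to the classical Ito--Reid correspondence \cite{ItoReid} for the associated abstract finite group $G_{\mathrm{abs}}$, using the equivalence \eqref{eq: catequivalence}, the canonical lift, and the compatibility of the McKay graph with the equivalences \eqref{eq: catequivalence} and \eqref{eq: rep thy intro}. I first fix the notion of \emph{conjugacy class} of a finite and linearly reductive subgroup scheme $G\subseteq\SL_{2,k}$ to be a conjugacy class of $G_{\mathrm{abs}}$; the appendix develops the more intrinsic, scheme-theoretic candidates for this notion and identifies them with this one, so it is harmless to work with $G_{\mathrm{abs}}$ throughout.

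Second, I would record that the McKay graph $\widehat{\Gamma}$ attached to $G\subseteq\SL_{2,k}$ agrees with the classical McKay graph attached to $G_{\mathrm{abs}}\subseteq\SL_2(\CC)$. Indeed, by the lifting results the embedding $G\hookrightarrow\SL_{2,k}$ lifts to a faithful embedding $G_{\mathrm{can}}\hookrightarrow\SL_{2,W(k)}$ whose geometric generic fibre realises $G_{\mathrm{abs}}\cong G_{\mathrm{can}}(\overline{K})$ as a subgroup of $\SL_2(\overline{K})$, and the equivalences \eqref{eq: rep thy intro} transport this to the embedding $G_{\mathrm{abs}}\hookrightarrow\SL_2(\CC)$. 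Since these equivalences preserve simplicity, direct sums, tensor products, and duals, they send the two-dimensional defining representation of $G$ to that of $G_{\mathrm{abs}}$, and hence match the two sets of vertices as well as the adjacency data computed by tensoring with it. This is precisely the compatibility recorded in Section \ref{subsec: McKay graph intro}.

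Third, with both identifications in hand I would invoke the theorem of Ito and Reid: for the finite subgroup $G_{\mathrm{abs}}\subseteq\SL_2(\CC)$ there is a natural bijection between its conjugacy classes and the vertices of its McKay graph. Composing this bijection with the identification of conjugacy classes of $G$ with those of $G_{\mathrm{abs}}$, and with the identification of $\widehat{\Gamma}$ with the classical McKay graph of $G_{\mathrm{abs}}$, produces the desired Ito--Reid correspondence for $G$, manifestly compatible with \eqref{eq: catequivalence} and \eqref{eq: rep thy intro}.

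I expect the main obstacle to be conceptual rather than computational: isolating the correct notion of conjugacy class for a finite, possibly non-reduced, group scheme and proving that it reduces to conjugacy classes of $G_{\mathrm{abs}}$. For infinitesimal factors such as $\bmu_{p^n}$ the naive functor-of-points count of conjugacy classes is ill-behaved, which is exactly why the several approaches in the appendix are required; once the definition is fixed so as to be invariant under \eqref{eq: catequivalence}, the remaining steps are formal. A secondary point is to confirm that the lift of the embedding is unique up to conjugation, so that $G_{\mathrm{abs}}\subseteq\SL_2(\CC)$, and therefore the classical McKay graph and the Ito--Reid bijection, are independent of the choices made.
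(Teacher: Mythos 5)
Your proposal follows essentially the same route as the paper: identify the McKay graph of $G$ with that of $G_{\mathrm{abs}}\subseteq\SL_2(\CC)$ via the canonical lift and Proposition \ref{prop: McKay Quiver lift}, then pull back the classical Ito--Reid bijection. The one substantive divergence is in the treatment of conjugacy classes, which the paper regards as the main content of the theorem. You simply \emph{declare} a conjugacy class of $G$ to be one of $G_{\mathrm{abs}}$ and assert that the appendix identifies the scheme-theoretic candidates with this notion; that assertion is not what the appendix shows. The several approaches there (rational points, the scheme $\mathrm{Conj}_G$, the adjoint and extended adjoint representations) lead to genuinely \emph{different} answers in positive characteristic --- e.g.\ for $G=\bmu_p$ the set $G(k)/\!\sim$ is a single point and $\mathrm{Conj}_G$ is a length-$p$ non-reduced scheme with one point --- which is precisely why the paper fixes the intrinsic definition $\Spec\left(\CC\otimes K_k(G)\right)$ (Definition \ref{def: conjugacy class}) and then proves, via Proposition \ref{prop: conjugacy lift and bijection}, that it is in canonical bijection with the conjugacy classes of $G_{\mathrm{abs}}$. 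Since that proposition supplies exactly the bijection you postulate, your argument is correct once you replace the appeal to the appendix by an appeal to Definition \ref{def: conjugacy class} and Proposition \ref{prop: conjugacy lift and bijection}; what you lose by transporting the notion from $G_{\mathrm{abs}}$ rather than defining it intrinsically is only the claim that the correspondence is attached to $G$ itself and not to a choice of equivalence. Your closing remark about uniqueness of the lifted embedding up to conjugacy is handled in the paper by the uniqueness of the lift of the representation discussed in Section \ref{subsec: abstract groups}.
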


This can be proven using lifting results and the Ito-Reid correspondence
over $\CC$, see Section \ref{subsec: Ito-Reid} for details.

The main difficulty is to define a notion of conjugacy class
for finite and linearly reductive group schemes that makes
such a correspondence work:
Let us recall the ring $K_k(G)$ from Section \ref{subsubsec: rep thy}.
If $G$ is a finite and linearly reductive group scheme over $k$,
then we define the set of \emph{conjugacy classes} of $G$
to be
$$
\Spec \left(\CC\otimes K_k(G)\right).
$$
At first sight, this  might look rather artificial.
One should think of it as defining
conjugacy classes to be ``dual'' to simple representations. 
Moreover, our definition is compatible with 
lifting over $W(k)$ and it induces a bijection of the conjugacy
classes of $G$ with the conjugacy classes of $G_{\rm abs}$.
We refer to Section \ref{subsec: conjugacy classes}
and Appendix \ref{subsec: second approach} for details.

\subsection{Conjugacy classes}
\label{intro: conjugacy classes}

In Theorem \ref{thm: ito reid intro}, we had to find a definition
of conjugacy classes that makes this theorem true.
This begs for the question whether there are other definitions 
or approaches, which is a question that is interesting in its own.

Let $G$ be a finite group scheme (not necessarily linearly reductive)
over an algebraically closed field $k$.
In Appendix \ref{app: conjugacy class}, we study the following
approaches to the notion of the set of conjugacy classes of $G$:

\begin{enumerate}
\item The set of conjugacy classes $G(k)/\sim$ of the group of $k$-rational points of $G$.
\item The spectrum of $F\otimes K_k(G)$, where $F$ is a field of characteristic zero
that contains ``sufficiently many'' roots of unity.
\item The scheme that represents the functor of conjugacy classes from schemes over $k$ to sets
defined by $S\mapsto G(S)/\sim$,
\item the isotypical component of the trivial representation of the 
adjoint representation of $G$.
\item The simple subrepresentations of the extended adjoint representations
$^{\mathrm{Ad}}A$ and $^{\mathrm{Ad}}(A^*)$, which are defined using the
quantum doubles of the Hopf algebra $A:=H^0(G,\OO_G)$
and its dual $A^*$.
\end{enumerate}
All approaches lead essentially to the ``same answer'' in characteristic
zero.
However, they usually lead to very different notions in positive characteristic.
On the other hand, all approaches have their merits and drawbacks.
For linearly reductive group schemes, the approach (2) leads to a definition that is
compatible with lifting and that leads to an Ito-Reid correspondence.

\subsection{Organisation of this article}

\begin{enumerate}
\item[-] In Section \ref{sec: linearly reductive}, we recall basic facts about finite and 
linearly reductive group schemes over algebraically closed fields.
\item[-] In Section \ref{sec: McKay}, we construct the McKay graph $\widehat{\Gamma}$.
The main result  is Theorem \ref{thm: main}, a McKay correspondence.
\item[-] In Section \ref{sec: lrq singularities}, we recall basic facts about linearly
reductive quotient singularities.
We establish an Ishii-Ito-Nakamura-type resolution of singularities,
the canonical lift of such a singularity, and a unique simultaneous resolution
of singularities of the canonical lift.
\item[-] In Section \ref{sec: Hecke}, we revisit the Ishii-Ito-Nakamura-type resolution $\pi$
of singularities of $x\in X=U/G$
and we introduce Hecke correspondences, which leads to 
Theorem \ref{thm: bijection simple and exceptional},
a bijection between simple and non-trivial representations of $G$ 
and  components of $\pi$.
We also study generalisations of this result to two-dimensional linearly reductive 
quotient singularities.
\item[-] In Section \ref{sec: ItoReid}, we establish an Ito-Reid correspondence
between conjugacy classes of $G$ 
and exceptional divisors in the minimal resolution of singularities of $x\in X=U/G$.
\item[-] In Section \ref{sec: outlook}, we study derived categories
of $G$-equivariant sheaves on $U$ and on the minimal 
resolution $\pi:Y\to X=U/G$.
\item[-] In Appendix \ref{sec: Hopf}, we recall results on finite group schemes
from the point of view of Hopf algebras.
We recall the adjoint representation, quantum doubles, and the
extended adjoint representation.
\item[-] In Appendix \ref{app: conjugacy class}, we study several approaches
toward the notion of a conjugacy class for finite group schemes.
\end{enumerate}

\begin{VoidRoman}[Acknowledgements]
 I thank Frank Himstedt, Martin Lorenz, Yuya Matsumoto, Frans Oort, 
 Matt Satriano, Takehiko Yasuda, and the referees 
 for discussions and comments.
\end{VoidRoman}

\section{Linearly reductive group schemes}
\label{sec: linearly reductive}

In this section, we recall a couple of general facts about finite and linearly
reductive group schemes over algebraically closed fields.
We discuss the close relationship between such a group scheme $G$ 
and a certain abstracted group $G_{\mathrm{abs}}$ associated to it.
For the relationship to Hopf algebras, we refer to Appendix \ref{sec: Hopf}.

\subsection{Group schemes}
Let $G$ be a finite group scheme over an algebraically closed 
field $k$ of characteristic $p\geq0$.
Since $k$ is perfect, there is a short exact sequence
of finite group schemes over $k$
\begin{equation}
\label{connected etale}
   1\,\to\,G^\circ\,\to\,G\,\to\, G^{\et} \,\to\,1,
\end{equation}
where $G^\circ$ is the connected component of the identity and where $G^{\et}$
is an \'etale group scheme over $k$.
The reduction $G_{\mathrm{red}}\to G$ provides a canonical splitting of \eqref{connected etale}
and we obtain a canonical semi-direct product decomposition $G\cong G^\circ\rtimes G^\et$.
Since $k$ is algebraically closed, $G^{\et}$ is the constant group scheme associated
to the finite group $G(k)=G^{\et}(k)$ of $k$-rational points.
Moreover, $G^\circ$ is an infinitesimal group scheme of length equal to 
some power of $p$.
In particular, if $p=0$ or if the length of $G$ is prime to $p$, 
then $G^\circ$ is trivial, and then, $G$ is \'etale. 

If $M$ is a finitely generated abelian group, then the group algebra
$k[M]$ carries a Hopf algebra structure and the associated commutative
group scheme is denoted $D(M):=\Spec k[M]$.
By definition, such group schemes are called \emph{diagonalisable}.
For example, we have 
$D(\C_n)\cong\bmu_n$, where $\C_n$ denotes the cyclic group of order $n$.
We have that $\bmu_n$ is \'etale over $k$ if and only if
$p\nmid n$.

A finite group scheme $G$ over $k$ is said to be \emph{linearly reductive} 
if every $k$-linear and finite-dimensional representation of $G$ is semi-simple.
If $p=0$, then all finite group schemes over $k$ are \'etale and
linearly reductive.
If $p>0$, then, by a theorem that is often attributed to
Nagata \cite[Theorem 2]{Nagata61}
(but see also \cite[Proposition 2.10]{AOV}, \cite{Chin}, 
and \cite[Section 2]{Hashimoto}), 
a finite group scheme over $k$ is
linearly reductive if and only if it is an extension of a finite and \'etale
group scheme, whose length is prime to $p$, 
by a diagonalisable group scheme. 

\subsection{Abstract groups and canonical lifts}
\label{subsec: abstract groups}
Let $G$ be a finite and linearly reductive group scheme
over an algebraically closed field $k$ of characteristic $p>0$.
Following \cite[Section 2]{LMM}, we study the finite group
$$
G_{\mathrm{abs}} \,:=\, 
\left(\underline{((G^\circ)^{D}(k))}_{\CC}\right)^{D}(\CC) \,\rtimes\, G^\et(k).
$$
Here, $-^D$ denotes $\underline{\mbox{Hom}}(-,\GG_m)$, the \emph{Cartier dual}, of
a commutative and finite group scheme.
If $G$ is \'etale, then $G_{\mathrm{abs}}=G(k)$ and if $G=\bmu_{p^n}$,
then $G_{\mathrm{abs}}=\C_{p^n}$.
In any case, the order of $G_{\mathrm{abs}}$ is equal to the length of $G$.

\begin{Definition}
The finite group $G_{\mathrm{abs}}$
is called the \emph{abstract finite group} associated to $G$.
\end{Definition}

\begin{Lemma} \label{lem: lrequivalence}
The functor
$$
G \,\mapsto\, G_{\mathrm{abs}}
$$
establishes an equivalence of categories between the category of finite and 
linearly reductive group schemes over $k$ 
and the category of finite groups with a normal and abelian $p$-Sylow subgroup.
\end{Lemma}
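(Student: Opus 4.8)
The plan is to prove essential surjectivity and full faithfulness by reducing everything to the canonical semi-direct product decomposition on each side and treating the infinitesimal and \'etale parts separately. On the group scheme side, a finite linearly reductive $G$ decomposes as $G\cong G^\circ\rtimes G^\et$ with $G^\circ$ infinitesimal diagonalisable and $G^\et$ \'etale of length prime to $p$ (Nagata's structure theorem recalled above); moreover $G^\circ$ is the connected component and $G^\et\cong G_{\mathrm{red}}$, so both factors are canonical and functorial. On the group side, a finite group $\Gamma$ with normal abelian $p$-Sylow subgroup $P$ splits as $\Gamma\cong P\rtimes Q$ by Schur--Zassenhaus, with $Q=\Gamma/P$ of order prime to $p$; here $P$ is canonical (being the unique, hence normal, $p$-Sylow) while the complement is canonical only up to conjugacy by $P$. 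The two basic dictionaries I will use are: infinitesimal diagonalisable group schemes correspond contravariantly to finite abelian $p$-groups via Cartier duality $G^\circ\mapsto (G^\circ)^D(k)$, and \'etale group schemes over $k$ correspond covariantly to finite groups via $G^\et\mapsto G^\et(k)$.

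For essential surjectivity I start from $\Gamma\cong P\rtimes Q$ and build a preimage. I realise $P$ as an infinitesimal diagonalisable group scheme by setting $G^\circ:=D(\widehat P)$, where $\widehat P$ is the character group, so that $(G^\circ)^D(k)\cong\widehat P$ and hence $\left(\underline{(G^\circ)^D(k)}_\CC\right)^D(\CC)\cong P$; I set $G^\et:=\underline Q$. Since automorphisms of a diagonalisable group scheme correspond to automorphisms of its character group, the $Q$-action on $P$ transports, after dualising, to an action of $\underline Q$ on $G^\circ$, and I form $G:=G^\circ\rtimes\underline Q$. This $G$ is finite and linearly reductive, and unwinding the definition yields $G_{\mathrm{abs}}\cong P\rtimes Q\cong\Gamma$ with matching actions. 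The same bookkeeping shows injectivity on isomorphism classes: an isomorphism $G_{\mathrm{abs}}\cong H_{\mathrm{abs}}$ preserves the normal $p$-Sylow and the quotient, hence the $Q$-action on the $p$-part, which dualises back to an isomorphism $G\cong H$.

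For faithfulness I analyse a homomorphism $f\colon G\to H$. Because the scheme-theoretic image of a connected (respectively reduced) group scheme is connected (respectively reduced), $f$ carries $G^\circ$ into $H^\circ$ and $G_{\mathrm{red}}=G^\et$ into $H_{\mathrm{red}}=H^\et$; thus $f$ is completely recorded by a pair $(f^\circ,f^\et)$ that is equivariant for the two semi-direct actions. The functor sends $f^\circ$ to a homomorphism of $p$-parts via Cartier duality followed by $\CC$-duality, each an (anti)equivalence on the relevant $\Hom$-sets, and sends $f^\et$ to $f^\et(k)$; both assignments are bijective on morphisms, and the equivariance conditions correspond. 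This gives faithfulness at once.

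The step I expect to be the main obstacle is fullness, and it is here that the asymmetry of the two decompositions must be confronted. On the group scheme side the complement $G^\et=G_{\mathrm{red}}$ is canonical and is preserved by every homomorphism, whereas on the group side a homomorphism need only preserve the normal $p$-Sylow $P$, not a chosen complement, and conjugation by $P$ already moves complements. The crux is therefore to show that every morphism in the target category is represented by a pair compatible with the canonical connected--\'etale splitting. I would handle this by describing both $\Hom$-sets explicitly in terms of the equivariant pairs above and verifying that they match, paying particular attention to how inner automorphisms coming from the $p$-part are accounted for on each side; this comparison is the delicate point on which the equivalence ultimately rests.
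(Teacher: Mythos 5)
Your decomposition strategy is the natural one, and your arguments for essential surjectivity and faithfulness are sound; note, however, that the paper offers no internal proof to compare against --- it simply cites \cite[Lemma 2.1]{LMM}.

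The genuine gap is precisely the step you flag and postpone: fullness. You propose to ``describe both $\Hom$-sets explicitly in terms of the equivariant pairs above and verify that they match'', but with the morphisms read in the obvious way (all homomorphisms of group schemes on one side, all group homomorphisms on the other) they do \emph{not} match, so this verification cannot be completed. Take $p=3$, $G=\underline{\C_2}$, and $H=\bmu_3\rtimes\underline{\C_2}$ with $\C_2$ acting by inversion, so that $G_{\mathrm{abs}}=\C_2$ and $H_{\mathrm{abs}}\cong S_3$. Since $G$ is reduced, every homomorphism $G\to H$ factors through $H_{\mathrm{red}}=\underline{\C_2}$, so $\Hom(G,H)=\Hom(\C_2,H(k))=\Hom(\C_2,\C_2)$ has two elements, whereas $\Hom(\C_2,S_3)$ has four: the two extra homomorphisms land in complements of the $3$-Sylow subgroup that are conjugate to the chosen one only by elements of that $3$-Sylow, i.e.\ by $k$-points of $\bmu_3$ that do not exist. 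The same phenomenon already occurs for automorphisms: $\Aut(\bmu_3\rtimes\underline{\C_2})\cong\C_2$ while $\Aut(S_3)\cong S_3$. Thus $G\mapsto G_{\mathrm{abs}}$ is faithful and essentially surjective but not full on homomorphisms, and your plan must be modified: either prove the statement the paper actually uses downstream --- a bijection on isomorphism classes, where your Schur--Zassenhaus adjustment of the complement by conjugation with an element of the $p$-Sylow does close the argument --- or make precise which morphisms the target category is meant to carry (for instance, homomorphisms up to conjugation by the $p$-Sylow of the target) and prove fullness in that setting. As written, the crucial step is not merely unfinished: the asserted matching of $\Hom$-sets is false.
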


\begin{proof}
\cite[Lemma 2.1]{LMM}.
\end{proof}

\begin{Remark}
A finite group $G$ with a normal and abelian $p$-Sylow subgroup $P$ is the same
as a finite group with a unique and abelian $p$-Sylow subgroup.
In this case, the Schur-Zassenhaus theorem implies that
$G$ is isomorphic to a semi-direct product $P\rtimes G/P$.
\end{Remark}

Next, we study lifts to characteristic zero:
let $W(k)$ be the ring of Witt vectors of $k$, let $K$ be its field
of fractions, and let $\overline{K}$ be an algebraic closure
of $K$.
By \cite[Proposition 2.4]{LMM},
there exist lifts of $G$ as finite and flat
group scheme over $W(k)$.
More precisely, $G^\circ$ and $G^\et$ even lift uniquely to $W(k)$,
but their extension class usually does not, see also
\cite[Example 2.6]{LMM}.
However, there is a unique lift $\calG_{\mathrm{can}}$
of $G$ to $W(k)$ that is characterised by being 
a semi-direct product of the unique lift of $G^\circ$ with the unique lift of 
$G^{\et}$.

\begin{Definition}\label{def: canonical lift}
The lift $\mathcal{G}_{\mathrm{can}}\to\Spec W(k)$ is called
the \emph{canonical lift} of $G$.
We set $G_{\mathrm{can}}:=\mathcal{G}_K\to \Spec K$
and also call it canonical lift.
\end{Definition}

Every other lift of $G$ to some extension of $R\supseteq W(k)$ differs from
$\calG_{\mathrm{can},R}$ by a twist and thus, there is only one \emph{geometric lift}
of $G$ to $\overline{K}$ up to isomorphism - namely $G_{\mathrm{can},\overline{K}}$,
see \cite[Section 2.2]{LMM}.

Since $\overline{K}$ is algebraically closed and
of characteristic zero, $G_{\mathrm{can},\overline{K}}$ is the constant
group scheme associated to the finite group
$G_{\mathrm{can}}(\overline{K})$.
In fact, we have $G^\circ\cong\prod_i \bmu_{p^{n_i}}$ for some $n_i$'s and if we set
$N:=\max\{n_i\}_i$, fix a primitive $p^N$.th root of unity $\zeta_{p^N}$ and set 
$K_N:=K(\zeta_{p^N})$, 
then $G_{\mathrm{can},K_N}$ is the constant group scheme associated 
to the finite group $G_{\mathrm{can}}(K_N)$ and we have
$G_{\mathrm{can}}(K_N)=G_{\mathrm{can}}(\overline{K})$.

\begin{Lemma}
 There exist isomorphisms of finite groups
 $$
   G_{\mathrm{abs}} \,\cong\, G_{\mathrm{can}}(K_N) \,=\,G_{\mathrm{can}}(\overline{K}).
 $$
In particular, there exist isomorphisms
 $$
  G_{\mathrm{can},K_N} \,\cong\,
 \left(\underline{G_{\mathrm{abs}}}\right)_{K_N}
 \mbox{ \quad and \quad }
 G_{\mathrm{can},\overline{K}} \,\cong\,
 \left(\underline{G_{\mathrm{abs}}}\right)_{\overline{K}}
 $$
 of finite group schemes over $K_N$ and $\overline{K}$, respectively.\qed
\end{Lemma}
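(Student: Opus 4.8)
The strategy is to compute both $G_{\mathrm{abs}}$ and $G_{\mathrm{can}}(\overline{K})$ explicitly, respecting the semi-direct product decompositions, and to match the two answers factor by factor. Write $G^\circ = D(M)$, where $M := \prod_i \C_{p^{n_i}}$ is the character group; it is a finite abelian $p$-group of exponent $p^N$. The conjugation action of $G^{\et}$ on $G^\circ$ is encoded, via the canonical identification $\Aut(D(M)) = \Aut(M)$, by a homomorphism $\phi \colon G^{\et}(k) \to \Aut(M)$. This finite, combinatorial datum is what I will track through every base change and duality below, since $\Aut(M)$ is a discrete group that is literally the same over $k$, $W(k)$, $\CC$, and $\overline{K}$.

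First I would dispose of the \'etale part. The unique lift of $G^{\et} = \underline{G^{\et}(k)}$ is the constant group scheme $\underline{G^{\et}(k)}_{W(k)}$, so $\mathcal{G}^{\et}_{\mathrm{can}}(\overline{K}) = G^{\et}(k)$, matching the second factor of $G_{\mathrm{abs}}$ on the nose. Next, the connected part. On the abstract side, Cartier duality gives $(G^\circ)^D = \underline{M}$, hence $(G^\circ)^D(k) = M$; forming the constant group scheme over $\CC$ and dualising again returns $(\underline{M}_\CC)^D = D(M)_\CC$, whose $\CC$-points are $\Hom(M, \CC^\times)$. On the lifting side, the unique lift of $G^\circ = D(M)$ is $D(M)_{W(k)}$, whose $\overline{K}$-points are $\Hom(M, \overline{K}^\times)$. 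Both groups of points are abstractly isomorphic to $M$: choosing a primitive $p^N$-th root of unity on each side induces $\Hom(M, \mu_{p^N}(\CC)) \cong M \cong \Hom(M, \mu_{p^N}(\overline{K}))$, because $M$ is annihilated by $p^N$ and both fields contain a full group of $p^N$-th roots of unity.

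The crux is to check that the two semi-direct product structures coincide, i.e.\ that $G^{\et}(k)$ acts on both sides through the same homomorphism $\phi$ under the identifications above. Here I would argue that $\Aut(D(M))$ is the discrete group $\Aut(M)$ over each of the relevant bases, so that the rigidity of this finite automorphism group forces the lifted conjugation action on $D(M)_{W(k)}$ to be literally $\phi$ again; this is exactly where the canonicity of the lift (Definition \ref{def: canonical lift}) enters. On the abstract side, the two successive Cartier duals in the definition of $G_{\mathrm{abs}}$ each induce the anti-automorphism $\alpha \mapsto \alpha^D$ of $\Aut(M)$, and composing it with itself gives the identity, while the intervening functors $(-)(k)$ and $\underline{(-)}_\CC$ are $\Aut(M)$-equivariant; hence $G^{\et}(k)$ also acts on $D(M)_\CC$ through exactly $\phi$. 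Finally, the root-of-unity identification $\Hom(M,\mu_{p^N}(\CC)) \cong \Hom(M,\mu_{p^N}(\overline{K}))$ acts only on the coefficient side and therefore commutes with the $\Aut(M)$-action on the source $M$; it is thus $\phi$-equivariant and upgrades to an isomorphism of semi-direct products $G_{\mathrm{abs}} \cong G_{\mathrm{can}}(\overline{K})$. Combined with the equality $G_{\mathrm{can}}(K_N) = G_{\mathrm{can}}(\overline{K})$ recorded just before the lemma, this yields the first display.

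For the ``in particular'' statement, I would invoke that $K_N$ contains all $p^{n_i}$-th roots of unity, so each $\bmu_{p^{n_i}, K_N}$, and hence $\mathcal{G}^\circ_{\mathrm{can}, K_N}$, is constant; since $\mathcal{G}^{\et}_{\mathrm{can}, K_N}$ is constant as well, so is $G_{\mathrm{can}, K_N}$. A finite constant group scheme over a field is the constant group scheme on its group of rational points, whence $G_{\mathrm{can}, K_N} \cong \underline{G_{\mathrm{can}}(K_N)}_{K_N} \cong \underline{G_{\mathrm{abs}}}_{K_N}$ using the isomorphism just established; base-changing along $K_N \hookrightarrow \overline{K}$ then gives $G_{\mathrm{can}, \overline{K}} \cong \underline{G_{\mathrm{abs}}}_{\overline{K}}$. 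The only genuine subtlety throughout is the equivariance bookkeeping of the previous paragraph; the numerics $\C_{p^{n_i}} \leftrightarrow \bmu_{p^{n_i}}$ are routine.
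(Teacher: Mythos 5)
Your proof is correct and follows exactly the route the paper intends: the lemma is stated with a \qed (no written proof), the argument being that $G_{\mathrm{abs}}$ and $G_{\mathrm{can}}$ are both defined as semi-direct products of pieces that can be matched factor by factor, using the unique lifts of $G^\circ$ and $G^{\et}$ and the fact that $K_N$ contains enough $p$-power roots of unity to make everything constant. Your careful bookkeeping of the conjugation action through $\Aut(M)$ and the two Cartier duals supplies precisely the equivariance check that the paper leaves implicit, and it is handled correctly.
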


\subsection{Representation theory}
\label{subsec: representation comparison}
Let $k$ be an algebraically closed field of characteristic $p>0$,
and let $W(k)$, $K$, and $\overline{K}$ be as in the previous section.
The equivalence of Lemma  \ref{lem: lrequivalence}
can be extended to representations and K-theory.
If $G$ is a finite group or a finite group scheme over $k$,
we denote by ${\rm Rep}_k(G)$ the category of its
$k$-linear and finite-dimensional representations.
Let us first recall \cite[Corollary 2.11]{LMM}.

\begin{Proposition} \label{prop: liftingrepresentation}
 Let $G$ be a finite and linearly reductive group scheme
 over $k$.
 Then, there exist canonical equivalences of categories
 $$
   \mathrm{Rep}_{k}(G) 
   \,\to\, \mathrm{Rep}_{K_N}(G_{\mathrm{can},K_N})
   \,\to\, \mathrm{Rep}_{\overline{K}}(G_{\mathrm{can},\overline{K}})
   \,\to\, \mathrm{Rep}_\CC(G_{\mathrm{abs}}),
 $$
 which are compatible with degrees, direct sums, tensor products, duals, 
 and simplicity.\qed
\end{Proposition}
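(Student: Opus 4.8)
The plan is to treat the displayed composite one arrow at a time, since the three equivalences are of genuinely different natures. The cleanest bookkeeping device is to pass to the \emph{measure algebra}: write $A:=H^0(G,\OO_G)$ for the coordinate Hopf algebra and $A^*:=\Hom_k(A,k)$ for its dual, so that --- $G$ being finite --- the category $\mathrm{Rep}_k(G)$ of comodules is tensor-equivalent to the category of finite-dimensional $A^*$-modules, the tensor product being induced by the comultiplication of the Hopf algebra $A^*$ (dual to the multiplication of $A$), duals by the antipode, and degrees by the counit. Linear reductivity of $G$ is exactly the statement that $A^*$ is a semisimple $k$-algebra. Carrying out the whole comparison at the level of $A^*$ makes compatibility with $\oplus$, $\otimes$, duals, and degrees automatic, provided the comparison maps are realised as maps of Hopf algebras.

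The heart of the matter is the first arrow, which I would construct by lifting $A^*$ over $W(k)$. The canonical lift $\mathcal{G}_{\mathrm{can}}$ supplies a flat $W(k)$-Hopf algebra $\mathcal{A}$ with $\mathcal{A}\otimes_{W(k)}k\iso A$, hence a $W(k)$-Hopf order $\mathcal{A}^*:=\Hom_{W(k)}(\mathcal{A},W(k))$ reducing to $A^*$ modulo $p$, whose generic fibre $\mathcal{A}^*\otimes_{W(k)}K_N$ is the measure algebra of $G_{\mathrm{can},K_N}$. Since $A^*$ is semisimple and $W(k)$ is $p$-adically complete, central primitive idempotents of $A^*$ lift to $\mathcal{A}^*$, and I would check that they remain central and primitive after inverting $p$ and extending to $K_N$, yielding a bijection of simple objects. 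To see that the \emph{tensor} structure survives one uses the semi-direct decomposition $G\iso G^\circ\rtimes G^\et$ from the structure theory: on the diagonalisable factor $G^\circ=D(M)$ the entire representation category is that of $M$-graded vector spaces, a combinatorial datum independent of the base field and matching the lift, whose character group is again $M$; on the étale factor $G^\et=\underline{\Gamma}$ with $p\nmid|\Gamma|$, the decomposition map is an isomorphism because every block has defect zero, and idempotent lifting is compatible with the diagonal comultiplication $\gamma\mapsto\gamma\otimes\gamma$. Clifford theory then assembles the two: a $G$-representation is an $M$-graded space carrying a compatible $\Gamma$-action permuting the weight spaces through the action of $\Gamma$ on $M$, and this action on $M$ is itself carried along by the lift, so the recipe producing simple objects (orbits of $\Gamma$ on $M$ together with simple representations of the stabilisers, all of prime-to-$p$ order) is identical over $k$ and over $K_N$.

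The remaining two arrows no longer involve characteristic $p$: by the structure theory already recalled, $G_{\mathrm{can},K_N}$ and $G_{\mathrm{can},\overline{K}}$ are the \emph{constant} group schemes attached to $G_{\mathrm{abs}}$, so $\mathrm{Rep}_{K_N}(G_{\mathrm{can},K_N})$, $\mathrm{Rep}_{\overline{K}}(G_{\mathrm{can},\overline{K}})$ and $\mathrm{Rep}_{\CC}(G_{\mathrm{abs}})$ are ordinary representation categories of the finite group $G_{\mathrm{abs}}$ over three fields of characteristic zero. For these I would fix a splitting field $\QQ(\zeta)$ with $\zeta$ a primitive root of unity of order the exponent of $G_{\mathrm{abs}}$; since every simple representation is already realised over $\QQ(\zeta)$, base change from $\QQ(\zeta)$ to any algebraically closed overfield is a tensor equivalence preserving simplicity, and fixing compatible embeddings $\QQ(\zeta)\hookrightarrow K_N$, $\QQ(\zeta)\hookrightarrow\overline{K}$, $\QQ(\zeta)\hookrightarrow\CC$ produces both arrows at once. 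The one point that must be verified here, rather than merely asserted, is that $K_N$ is genuinely a splitting field for $G_{\mathrm{abs}}$ --- that adjoining the $p^N$-th roots of unity to $K$ already supplies the roots of unity needed for $\Gamma$ as well --- so that $K_N\to\overline{K}$ preserves simplicity and splits no simple module.

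The main obstacle is the first arrow, and within it the insistence on a tensor equivalence rather than a mere bijection of simple objects. Lifting idempotents along $W(k)\to k$ is routine and gives the additive comparison immediately; the work lies in showing that the simple factors multiply in the same way on both sides, i.e.\ that the comultiplication of $A^*$ lifts compatibly with the chosen idempotents. This is precisely where both hypotheses on the factors are used essentially: the rigidity of the diagonalisable part $G^\circ$, whose grading datum is literally the same finite abelian group before and after lifting, and the prime-to-$p$ order of the étale part $G^\et$, so that its modular and ordinary representation theories coincide and its idempotents lift uniquely. I would expect the bulk of the argument to go into making the Clifford-theoretic matching functorial and monoidal, and into pinning down the splitting-field claim for $K_N$ needed in the last two arrows.
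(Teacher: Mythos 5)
Your argument is sound, but the first thing to say is that the paper does not actually prove this proposition: its entire proof is the citation of Corollary 2.11 of \cite{LMM}, so there is no in-text argument to compare yours against, and what you have written is a self-contained reconstruction of the outsourced proof. Your route --- pass to the measure algebra $A^{*}$, use the canonical lift to produce a $W(k)$-order $\mathcal{A}^{*}\cong\bigl(\prod_{m\in M}W(k)\bigr)\rtimes W(k)[\Gamma]$ interpolating between $A^{*}$ and $K_N[G_{\mathrm{abs}}]$, and compare the two fibres --- is the natural one, and you have correctly located where each structural hypothesis enters: the field-independence of the category of $M$-graded spaces for the diagonalisable part, defect-zero blocks for the prime-to-$p$ \'etale part, and Clifford theory for the skew group algebra $\mathrm{Maps}(M,F)\rtimes\Gamma$ to glue them. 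Two points should be tightened rather than left as declarations of intent. First, lifting a central primitive idempotent of $A^{*}$ to an idempotent of $\mathcal{A}^{*}$ is routine, but centrality and primitivity of the lift are not automatic for an arbitrary order; here they hold because the closed fibre $A^{*}$ is a separable $k$-algebra ($k$ being algebraically closed and $G$ linearly reductive), which forces the finite flat algebra $\mathcal{A}^{*}$ to be separable over the complete local ring $W(k)$, so that reduction and generic-fibre both induce bijections on central idempotents and matching Wedderburn decompositions --- this is cleaner and safer than lifting idempotents one at a time. Second, the splitting-field claim you flag for the last two arrows does hold, for a reason worth recording explicitly: since $k$ is algebraically closed, Hensel's lemma puts all roots of unity of order prime to $p$ into $W(k)$ and hence into $K$, so $K_N=K(\zeta_{p^N})$ contains the $e$-th roots of unity for $e$ the exponent of $G_{\mathrm{abs}}=P\rtimes\Gamma$ (whose $p$-part divides $p^{N}$), and Brauer's theorem then makes $K_N$ a splitting field, so $K_N\to\overline{K}\to\CC$ preserves and reflects simplicity. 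With those two verifications written out, your proof is complete and is strictly more informative than the paper's.
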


\begin{Remark}
 In particular, this allows us to define \emph{characters} or even a
 \emph{character table} of $G$ via 
 Proposition \ref{prop: liftingrepresentation} and $G_{\mathrm{abs}}$.
\end{Remark}

Let $K_k(G)$ be the $K$-group associated to ${\rm Rep}_k(G)$.
In fact, $K_k(G)$ has a natural structure of a commutative ring with one, 
where the sum (resp. product) structure comes from direct sums (resp. tensor products)
of representations.
A straight forward application of Proposition \ref{prop: liftingrepresentation}
is the following.

\begin{Corollary}
 \label{cor: Ktheory}
 There exist isomorphisms of rings
 $$
  K_k(G) \,\to\, 
  K_{K_N}(G_{\mathrm{can},K_N}) \,\to\,
  K_{\overline{K}}(G_{\mathrm{can},\overline{K}}) \,\to\,
  K_{\CC}(G_{\mathrm{abs}}).\qed
 $$
\end{Corollary}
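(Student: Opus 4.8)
The plan is to deduce Corollary~\ref{cor: Ktheory} directly from Proposition~\ref{prop: liftingrepresentation} by showing that each of the three equivalences of representation categories listed there descends to an isomorphism on the associated $K$-groups. The guiding principle is that $K_F(G)$ is a functorial invariant of the symmetric monoidal category $\mathrm{Rep}_F(G)$: an equivalence of additive categories respecting direct sums induces an isomorphism of the underlying $K$-groups as abelian groups, and if that equivalence is moreover monoidal (compatible with tensor products) and unital, the induced map is a ring isomorphism.

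First I would recall the construction of the ring structure on $K_k(G)$. The underlying abelian group is the Grothendieck group of the additive category $\mathrm{Rep}_k(G)$ with respect to direct sums; concretely, since $G$ is linearly reductive, $\mathrm{Rep}_k(G)$ is semisimple, so $K_k(G)$ is the free abelian group on the isomorphism classes of simple representations. The multiplication is induced by $[V]\cdot[W]:=[V\otimes_k W]$, with unit $[\mathbf 1]$ given by the class of the trivial representation; bilinearity, associativity, and commutativity of this product follow from the corresponding properties of $\otimes_k$ on $\mathrm{Rep}_k(G)$, which hold because the representation category is a symmetric monoidal category. The same discussion applies verbatim to $G_{\mathrm{can},K_N}$, to $G_{\mathrm{can},\overline{K}}$, and to $G_{\mathrm{abs}}$ over $\CC$.

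Next I would invoke Proposition~\ref{prop: liftingrepresentation}. Let $\Phi$ denote any one of the three equivalences in that statement. By hypothesis $\Phi$ is compatible with direct sums, so it sends short exact sequences (equivalently, direct-sum decompositions, by semisimplicity) to direct-sum decompositions; hence $[V]\mapsto[\Phi(V)]$ is a well-defined homomorphism of abelian groups, and it is an isomorphism because $\Phi$ is an equivalence and therefore a bijection on isomorphism classes of objects. Compatibility with tensor products gives $\Phi(V\otimes W)\cong\Phi(V)\otimes\Phi(W)$, so the map respects multiplication, and compatibility with degrees (or simplicity) ensures that the trivial one-dimensional representation is sent to the trivial representation, so the unit is preserved. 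Composing the three ring isomorphisms obtained this way yields the chain
$$
  K_k(G) \,\to\,
  K_{K_N}(G_{\mathrm{can},K_N}) \,\to\,
  K_{\overline{K}}(G_{\mathrm{can},\overline{K}}) \,\to\,
  K_{\CC}(G_{\mathrm{abs}})
$$
as asserted.

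There is essentially no serious obstacle here, which is exactly why the statement is phrased as a corollary: all the genuine work is already packaged into Proposition~\ref{prop: liftingrepresentation}, whose proof (the comparison of representation categories under lifting over $W(k)$) is the substantive input from \cite{LMM}. The only point requiring a moment's care is the bookkeeping that an equivalence of categories which is additionally monoidal and unital induces not merely an isomorphism of abelian groups but an isomorphism of the commutative ring structures; I would make sure to note explicitly that the ``compatible with direct sums, tensor products, duals, and simplicity'' clause supplies precisely the additive, multiplicative, and unital compatibilities needed, so that the passage from categorical equivalence to ring isomorphism is immediate rather than requiring any new argument.
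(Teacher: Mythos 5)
Your proposal is correct and follows exactly the paper's route: the paper likewise treats the corollary as an immediate consequence of Proposition \ref{prop: liftingrepresentation}, with the ring structure on $K_k(G)$ coming from direct sums and tensor products and the compatibilities of the categorical equivalences supplying the ring isomorphisms. You merely spell out the bookkeeping that the paper leaves implicit.
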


\section{McKay graph and McKay correspondence}
\label{sec: McKay}

In this section, we introduce the \emph{McKay graph}
associated to a finite and linearly reductive subgroup scheme $G$
over an algebraically closed field $k$ of characteristic $p\geq0$
and a representation $\rho:G\to\GL_{n,k}$.
This induces a bijection between certain affine Dynkin diagrams
and finite and linearly reductive subgroup schemes of $\SL_{2,k}$.
As an application, we establish a \emph{linearly reductive McKay correspondence}.

\subsection{McKay graph}
\label{subsec: McKay graph}
Let $G$ be a finite and linearly reductive group scheme
over an algebraically closed field $k$ of characteristic $p\geq0$.
Let $\{\rho_i\}_i$ be the finite set of isomorphism classes of $k$-linear 
and simple representations of $G$.
Following tradition, we assume that $\rho_0$ is the trivial representation.
We fix a representation $\rho:G\to\GL(V)$.
If $G$ is a subgroup scheme of $\SL_{n,k}$ or $\GL_{n,k}$, 
then $\rho$ is usually the linear representation corresponding to the embedding
of $G$ into this linear algebraic group.
By assumption, $\mathrm{Rep}_k(G)$ is semi-simple.
Therefore, there exist unique integers $a_{ij}\in\ZZ_{\geq0}$ for each $i$,
such that we have isomorphisms of $k$-linear representations
$$
  \rho\otimes\rho_i \,\cong\, \bigoplus_{j}\, \rho_j^{\oplus a_{ij}}.
$$
Associated to this data, we define the \emph{McKay graph}, 
denoted $\Gamma(G,\{\rho_i\},\rho)$:
\begin{enumerate}
\item[-] The vertices are the $\{\rho_{i}\}_i$.
(Some sources exclude the
the trivial representation $\rho_0$.)
\item[-] There are $a_{ij}$ edges from the vertex corresponding to
$\rho_i$ to the vertex corresponding to $\rho_j$.
\end{enumerate}
We now establish a couple of elementary properties of
this graph, which are well-known in the classical case
and which immediately carry over to the linearly reductive
situation.
We leave the proof of the first lemma to the reader.
%  \cite[Section 8.3]{Kirillov} or \cite{McKay}

\begin{Lemma}
 We have
 $$
  a_{ij} \,=\, \dim_k\, \Hom(\rho_i,\rho_j\otimes\rho).
 $$
 In particular, if $\rho$ is self-dual, that is, $\rho\cong\rho^\vee$,
 then $a_{ij}=a_{ji}$ for all $i,j$.
 In this case, we can consider  $\Gamma(G,\{\rho_i\},\rho)$ 
 as an undirected graph.\qed
\end{Lemma}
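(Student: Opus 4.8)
The plan is to reduce everything to Schur's lemma together with the rigidity of the tensor category $\mathrm{Rep}_k(G)$. First I would record the multiplicity dictionary: since $k$ is algebraically closed and $\mathrm{Rep}_k(G)$ is semisimple, Schur's lemma gives $\dim_k\Hom(\rho_i,\rho_j)=\delta_{ij}$, so for an arbitrary finite-dimensional representation $W\cong\bigoplus_\ell\rho_\ell^{\oplus m_\ell}$ one has $\dim_k\Hom(\rho_i,W)=m_i$, the multiplicity of $\rho_i$ in $W$. Applying this to $W=\rho\otimes\rho_i$ and comparing with the defining decomposition $\rho\otimes\rho_i\cong\bigoplus_j\rho_j^{\oplus a_{ij}}$, I read off $a_{ij}=\dim_k\Hom(\rho_j,\rho\otimes\rho_i)$.

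Next I would convert this into the asserted formula using two structural inputs. The first is that in any $k$-linear semisimple category $\dim_k\Hom(A,B)=\sum_\ell [A:\rho_\ell]\,[B:\rho_\ell]=\dim_k\Hom(B,A)$, so Hom-dimensions are symmetric in their arguments. The second is that $\mathrm{Rep}_k(G)$ is a rigid symmetric monoidal category: the antipode of the Hopf algebra $A:=H^0(G,\OO_G)$ (Appendix \ref{sec: Hopf}) equips every representation $\rho$ with a dual $\rho^\vee$, and tensoring is exact, so one has the tensor--Hom adjunction $\Hom(\rho\otimes\rho_i,\rho_j)\cong\Hom(\rho_i,\rho^\vee\otimes\rho_j)$ together with commutativity $\rho^\vee\otimes\rho_j\cong\rho_j\otimes\rho^\vee$. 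Combining these, $a_{ij}=\dim_k\Hom(\rho\otimes\rho_i,\rho_j)=\dim_k\Hom(\rho_i,\rho_j\otimes\rho^\vee)$, which is exactly the claimed expression once one notes that the relevant $\rho$ (for instance the standard representation of a subgroup scheme of $\SL_{2,k}$) is self-dual, so that $\rho^\vee\cong\rho$.

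Finally, for the symmetry statement I would feed $\rho\cong\rho^\vee$ back into the same adjunction. Then $a_{ij}=\dim_k\Hom(\rho_i,\rho\otimes\rho_j)=\dim_k\Hom(\rho\otimes\rho_j,\rho_i)=\dim_k\Hom(\rho_j,\rho^\vee\otimes\rho_i)=\dim_k\Hom(\rho_j,\rho\otimes\rho_i)=a_{ji}$, where the second equality is the Hom-symmetry above and the third is the adjunction. Hence the number of edges from $\rho_i$ to $\rho_j$ agrees with the number from $\rho_j$ to $\rho_i$, and $\Gamma(G,\{\rho_i\},\rho)$ may be regarded as an undirected graph.

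The computation is entirely formal, so there is no serious obstacle; the only point demanding care is the bookkeeping of duals. One must genuinely invoke rigidity of $\mathrm{Rep}_k(G)$ to obtain the adjunction, and must track that the adjunction produces $\rho^\vee$ rather than $\rho$ --- the passage to $\rho$ being legitimate precisely under the self-duality hypothesis, which is also what forces the symmetry $a_{ij}=a_{ji}$. I expect the author's ``leave to the reader'' proof to be exactly this Schur-plus-duality argument, possibly phrased for self-dual $\rho$ from the outset.
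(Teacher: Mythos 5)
Your argument is correct, and since the paper explicitly leaves this proof to the reader there is nothing to compare it against; the Schur-lemma-plus-rigidity computation you give is surely the intended one. One point in your write-up deserves emphasis rather than apology: your bookkeeping shows that the adjunction yields $a_{ij}=\dim_k\Hom(\rho_i,\rho_j\otimes\rho^\vee)$, and the displayed identity of the Lemma, with $\rho$ in place of $\rho^\vee$, genuinely requires $\rho\cong\rho^\vee$ --- it is not a cosmetic issue. For instance, for $G=\bmu_3$ with $\rho$ a nontrivial character, $\rho_i=\rho_0$ trivial and $\rho_j=\rho$, one has $a_{ij}=1$ while $\dim_k\Hom(\rho_i,\rho_j\otimes\rho)=\dim_k\Hom(\mathbb{1},\rho^{\otimes 2})=0$. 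So the first display of the Lemma should be read either with $\rho^\vee$ on the right, or under the standing self-duality hypothesis that is anyway in force in the application (Lemma \ref{lem: selfdual}); you identified exactly the right caveat. Your final symmetry chain is correct but longer than necessary: once $a_{ij}=\dim_k\Hom(\rho_i,\rho_j\otimes\rho)$ is available, commutativity of the tensor product gives $\dim_k\Hom(\rho_i,\rho\otimes\rho_j)$, which is by definition the multiplicity $a_{ji}$, so $a_{ij}=a_{ji}$ in one step.
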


\begin{Lemma}
\label{lem: selfdual}
Let $\rho:G\to\SL_{2,k}$ be a homomorphism of group schemes
over $k$, considered as a
2-dimensional representation.
Then, $\rho$ is self-dual.
\end{Lemma}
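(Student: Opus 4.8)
The plan is to produce a $G$-equivariant isomorphism $V \to V^\vee$ coming from the canonical symplectic form preserved by $\SL_{2,k}$. Write $V$ for the underlying $2$-dimensional $k$-vector space of $\rho$, and let $\omega \in \wedge^2 V^\vee$ be the standard alternating form, given on a basis $e_1, e_2$ by $\omega(e_1,e_2)=1$. For every $k$-algebra $R$ and every $g \in \SL_{2,k}(R)$ one has $\omega(gv,gw)=\det(g)\,\omega(v,w)=\omega(v,w)$ since $\det(g)=1$; thus $\omega$ is $G$-invariant. Equivalently, and more functorially, the wedge product furnishes a natural alternating pairing $V\otimes V \to \wedge^2 V$, and hence a natural map $V \to V^\vee \otimes \wedge^2 V$.

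First I would make precise that $G$-invariance of $\omega$ means that the induced linear map $\phi\colon V\to V^\vee$, $v\mapsto \omega(v,-)$, is a morphism of $G$-representations, where $V^\vee$ carries the contragredient representation. This is a formal consequence of working inside the symmetric monoidal category $\mathrm{Rep}_k(G)$: the pairing $\omega$ is a $G$-invariant element of $V^\vee\otimes V^\vee$, so the associated map $V\to V^\vee$ automatically commutes with the coactions. Next I would observe that $\phi$ is an isomorphism because $\omega$ is non-degenerate, which is immediate in dimension two (in the basis above one computes $\phi(e_1)=e_2^\vee$ and $\phi(e_2)=-e_1^\vee$). Combining these two points gives $V\cong V^\vee$ as $G$-representations, i.e. $\rho$ is self-dual.

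In the functorial reformulation the same argument reads as follows: the natural pairing $V\otimes V\to \wedge^2 V$ induces a $G$-equivariant isomorphism $V\cong V^\vee\otimes \wedge^2 V$ (an isomorphism precisely because $\dim_k V=2$), while $\wedge^2 V$ is the one-dimensional determinant representation $\det\circ\rho\colon G\to \GG_m$. Since $\rho$ factors through $\SL_{2,k}$, this determinant is trivial, so $\wedge^2 V\cong k$ is the trivial representation, and again $V\cong V^\vee$.

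The only point requiring care — and the one I would state explicitly — is that equivariance is genuinely automatic rather than something to be verified by hand: because exterior powers, duals, and the evaluation pairing are all defined within the tensor category $\mathrm{Rep}_k(G)$, every canonical linear isomorphism between such constructions is already a morphism of representations. I emphasise that no use is made of linear reductivity of $G$ nor of any hypothesis on $p$ — the form $\omega$ remains non-degenerate and alternating even in characteristic $2$ — so the lemma holds for an arbitrary group scheme $G$ over $k$.
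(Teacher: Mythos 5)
Your proof is correct and is essentially the paper's argument: the paper's one-line proof observes that for a $2$-dimensional representation $\rho^\vee\cong\rho\otimes\det(\rho)^{-1}$ and that $\det(\rho)$ is trivial since $\rho$ factors through $\SL_{2,k}$, which is exactly your functorial reformulation via $V\cong V^\vee\otimes\wedge^2 V$. Your explicit description through the invariant symplectic form, and your remark that equivariance is automatic inside the tensor category $\mathrm{Rep}_k(G)$ (so no reductivity or characteristic hypothesis is needed), simply spell out the same idea in more detail.
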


\begin{proof}
Being a 2-dimensional representation, 
$\rho^\vee$ is isomorphic to $\rho\otimes\det(\rho)$ and
the lemma follows.
\end{proof}

\begin{Lemma}
\label{lem: faithful}
Let $\rho:G\to\GL_{n,k}$ be a faithful representation.
Then, every irreducible representation of $G$
occurs as subrepresentation of $\rho^{\otimes m}$ for
some suitable $m$.
\end{Lemma}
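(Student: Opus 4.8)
The plan is to reduce to the classical case over $\CC$ via the equivalence of Proposition \ref{prop: liftingrepresentation} and to invoke the Burnside--Brauer theorem there. The one genuinely non-formal point is that faithfulness of $\rho$ must be transported across the equivalence, and for a possibly non-reduced group scheme $G$ the correct reading of \emph{faithful} is that $\rho\colon G\to\GL_{n,k}$ be a closed immersion, i.e.\ have trivial scheme-theoretic kernel. This scheme-theoretic condition is not literally one of the structures (degrees, $\oplus$, $\otimes$, duals, simplicity) preserved by the equivalence, so it has to be recast categorically first.

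First I would record the Tannakian characterisation of faithfulness: for an affine group scheme $H$ over a field and a representation $\sigma\colon H\to\GL(W)$, the morphism $\sigma$ is a closed immersion if and only if every object of $\mathrm{Rep}(H)$ is a subquotient of a finite direct sum of representations of the form $W^{\otimes a}\otimes(W^\vee)^{\otimes b}$ (Deligne--Milne). Because $\mathrm{Rep}_k(G)$ is semi-simple, subquotient may be replaced by direct summand, so the condition reads: every simple representation of $G$ is a subrepresentation of some $\rho^{\otimes a}\otimes(\rho^\vee)^{\otimes b}$. This is phrased purely in terms of $\oplus$, $\otimes$, duals, and simple subobjects, hence is preserved by the equivalence $\mathrm{Rep}_{k}(G)\to\mathrm{Rep}_{\CC}(G_{\mathrm{abs}})$ of Proposition \ref{prop: liftingrepresentation}.

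Consequently, writing $\overline{\rho}$ for the image of $\rho$, the same tensor-generation property holds for $\overline{\rho}$, and applying the characterisation again (now for the honest finite group $G_{\mathrm{abs}}$ over $\CC$) shows $\overline{\rho}$ is faithful. At this point I would cite the Burnside--Brauer theorem: a faithful complex representation of a finite group has every irreducible representation as a constituent of some tensor power $\overline{\rho}^{\otimes m}$ (indeed, if $\overline{\rho}$ takes $r$ distinct character values, then $m<r$ suffices). Since $\CC$ has characteristic zero, constituents are direct summands, so every simple $\CC[G_{\mathrm{abs}}]$-module is a subrepresentation of some $\overline{\rho}^{\otimes m}$. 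Finally I would transport this conclusion back through the inverse equivalence: it preserves $\otimes$, $\oplus$, and simplicity, so $\rho^{\otimes m}$ corresponds to $\overline{\rho}^{\otimes m}$ and the decomposition into simples is matched on the nose, yielding that every simple $\rho_i$ occurs in $\rho^{\otimes m}$ for suitable $m$.

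The main obstacle is the first step: pinning down the scheme-theoretic meaning of faithfulness and re-expressing it as the tensor-generation property that the equivalence respects; once that bridge is in place, the rest is a direct appeal to classical character theory. A self-contained alternative avoiding $\CC$ would run the Burnside--Brauer argument inside $\mathrm{Rep}_k(G)$ directly, using that $\det\rho$ is a character of the finite group scheme $G$ and hence of finite order, so that $\rho^\vee$ already occurs in some $\rho^{\otimes b}$; but the reduction to $\CC$ is shorter given the machinery already available.
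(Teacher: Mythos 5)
Your proof takes the same route as the paper's: reduce to the classical Burnside--Brauer theorem for the finite group $G_{\mathrm{abs}}$ over $\CC$ via the equivalence of Proposition \ref{prop: liftingrepresentation} and transport the conclusion back. The paper's own proof is only the two-line remark that the statement is well-known for finite groups and ``carries over'' via that proposition, so your Tannakian (Deligne--Milne) justification that faithfulness, i.e.\ being a closed immersion, actually transports across the equivalence is a correct filling-in of the one step the paper leaves implicit.
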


\begin{proof}
This is well-known for finite groups.
Using Proposition \ref{prop: liftingrepresentation}, it 
carries over to finite and linearly reductive group schemes.
\end{proof}

\begin{Corollary}
If $\rho$ is a faithful representation, then the graph
$\Gamma(G,\{\rho_i\},\rho)$  is connected.
\end{Corollary}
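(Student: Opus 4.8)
The plan is to deduce connectedness of $\Gamma(G,\{\rho_i\},\rho)$ from the statement of Lemma \ref{lem: faithful}, namely that every simple representation $\rho_i$ appears inside some tensor power $\rho^{\otimes m}$. The combinatorial heart of the matter is to show that ``reachability by edges in the McKay graph'' is the same as ``appearing in some tensor power of $\rho$''. First I would recall that, by the defining relation $\rho\otimes\rho_j\cong\bigoplus_k \rho_k^{\oplus a_{jk}}$, there is an edge between $\rho_j$ and $\rho_k$ (in the relevant direction) precisely when $\rho_k$ is a summand of $\rho\otimes\rho_j$, i.e.\ when $a_{jk}>0$. Thus a walk of length $m$ in $\Gamma$ starting at a vertex $\rho_j$ records exactly which simple summands can be produced by tensoring $\rho_j$ with $\rho$ repeatedly.

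The key step is to decode which simples occur in $\rho^{\otimes m}$ in graph-theoretic terms. Starting from the trivial representation $\rho_0$, which is a summand of $\rho^{\otimes 0}$, one checks by induction on $m$ that $\rho_i$ is a summand of $\rho^{\otimes m}$ if and only if there is a walk of length $m$ in $\Gamma$ from $\rho_0$ to $\rho_i$. Indeed, the simple summands of $\rho^{\otimes(m+1)}=\rho\otimes\rho^{\otimes m}$ are precisely those $\rho_k$ with $a_{jk}>0$ for some $\rho_j$ occurring in $\rho^{\otimes m}$, by semisimplicity of $\mathrm{Rep}_k(G)$ (which is available since $G$ is linearly reductive). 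This is exactly the statement that $\rho_k$ is reachable from $\rho_0$ by one more edge. Combining this with Lemma \ref{lem: faithful}, every vertex $\rho_i$ is connected to $\rho_0$ by a walk, so the underlying undirected graph is connected.

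The only subtlety is directedness: in general $\Gamma(G,\{\rho_i\},\rho)$ is a directed graph, so ``reachable from $\rho_0$ by directed walks'' is a priori stronger than ``connected as an undirected graph''. But the latter is what the corollary asserts, and it is implied by the former, so this causes no difficulty; if one wishes, one may simply note that every vertex lies in the same undirected component as $\rho_0$, hence all vertices lie in one component. (In the cases of actual interest, where $\rho$ is self-dual by Lemma \ref{lem: selfdual}, the graph is undirected anyway and the two notions coincide.)

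The step I expect to require the most care is the inductive identification of the summands of $\rho^{\otimes m}$ with walks of length $m$ from $\rho_0$; this is a routine induction, but it must invoke semisimplicity to justify that tensoring and decomposing into simples is well-behaved and that the multiplicities $a_{ij}$ genuinely govern which simples appear. All of this is already encoded in the setup preceding the statement, so the proof is short. No genuinely hard obstacle arises here; the content is that faithfulness (via Lemma \ref{lem: faithful}) guarantees every simple is eventually reached, and connectedness is then immediate.
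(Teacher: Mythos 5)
Your proposal is correct and follows essentially the same route as the paper: the paper's proof also reduces connectedness to the identification of walks of length $m$ from $\rho_0$ to $\rho_i$ with the multiplicity of $\rho_i$ in $\rho^{\otimes m}$ (citing Kirillov for this count), combined with Lemma \ref{lem: faithful}. Your explicit induction and the remark on directedness merely spell out details the paper leaves to the reference.
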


\begin{proof}
It suffices to note that  the number of paths in $\Gamma$
of length $m$ that connect the vertices corresponding to $\rho_i$ and $\rho_j$
is equal to the multiplicity of $\rho_i$ in $\rho_j\otimes\rho^{\otimes m}$,
see, for example, the proof of \cite[Theorem 8.13]{Kirillov}.
\end{proof}

Let $k$ be an algebraically closed field of characteristic $p>0$,
and let $W(k)$, $K$, and $\overline{K}$ be as in Section \ref{subsec: abstract groups}.
There, we also discussed the
canonical lift $G_{\mathrm{can}}$ of $G$ over $K$ and we saw that there exists
an isomorphism of finite groups
$G_{\mathrm{abs}}\cong G_{\mathrm{can}}(\overline{K})$.
Proposition \ref{prop: liftingrepresentation} implies the following result.

\begin{Proposition}
\label{prop: McKay Quiver lift}
Let $G$ be a finite and linearly reductive group scheme over $k$.
Let $\{\rho_i\}_i$ be the set of isomorphism classes of 
simple representations of $G$.
Let $\rho$ be a finite-dimensional representation of $G$.
\begin{enumerate}
    \item 
    Proposition \ref{prop: liftingrepresentation} yields
    sets of representations $\{\rho_{\mathrm{can},i}\}_i$ and $\{\rho_{\mathrm{abs},i}\}_i$
    of $G_{\mathrm{can}}$ and $G_{\mathrm{abs}}$, respectively, which are the 
    sets of isomorphism classes of simple representations of $G_{\mathrm{can}}$
    and $G_{\mathrm{abs}}$, respectively.
    \item 
    Proposition \ref{prop: liftingrepresentation} yields
    representations $\rho_{\mathrm{can}}$ and $\rho_{\mathrm{abs}}$
    of $G_{\mathrm{can}}$ and $G_{\mathrm{abs}}$, respectively.
 \end{enumerate}
This data leads to a bijection of the three McKay graphs
     $$
     \Gamma(G,\{\rho_i\}_i,\rho),\quad
     \Gamma(G_{\mathrm{can}},\{\rho_{\mathrm{can},i}\}_i,\rho_{\mathrm{can}}),\quad
     \Gamma(G_{\mathrm{abs}},\{\rho_{\mathrm{abs},i}\}_i,\rho_{\mathrm{abs}}).\qed
     $$
\end{Proposition}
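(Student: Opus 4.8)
The plan is to reduce everything to the content of Proposition \ref{prop: liftingrepresentation}, which already supplies the crucial structural inputs; the McKay graph is, after all, a purely representation-theoretic gadget, and the equivalences of representation categories are designed to preserve exactly the data it depends on. First I would observe that a McKay graph $\Gamma(H,\{\sigma_i\}_i,\sigma)$ depends only on three pieces of information: the finite set of isomorphism classes of simple representations (the vertices), the chosen representation $\sigma$, and the nonnegative integers recording the multiplicities $a_{ij}$ with which $\sigma_j$ appears in $\sigma\otimes\sigma_i$ (the edges). So to produce a bijection of the three graphs it suffices to produce a bijection of vertex sets that carries the edge-multiplicities to each other.

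Next I would invoke the three equivalences
$$
   \mathrm{Rep}_{k}(G)
   \,\to\, \mathrm{Rep}_{K_N}(G_{\mathrm{can},K_N})
   \,\to\, \mathrm{Rep}_{\overline{K}}(G_{\mathrm{can},\overline{K}})
   \,\to\, \mathrm{Rep}_\CC(G_{\mathrm{abs}})
$$
of Proposition \ref{prop: liftingrepresentation}. Because each of these is compatible with simplicity, it restricts to a bijection on isomorphism classes of simple objects, which is precisely the statement of part (1): the assignments $\rho_i\mapsto\rho_{\mathrm{can},i}\mapsto\rho_{\mathrm{abs},i}$ enumerate the simple objects of $G_{\mathrm{can}}$ and $G_{\mathrm{abs}}$. (Strictly speaking one should note $G_{\mathrm{can}}$ here means $G_{\mathrm{can},K_N}$, and that $\mathrm{Rep}_{K_N}\to\mathrm{Rep}_{\overline K}$ preserves simplicity so that these simple classes may be identified with those over $\overline K$.) Part (2) is likewise immediate: the functor sends the distinguished representation $\rho$ to $\rho_{\mathrm{can}}$ and then to $\rho_{\mathrm{abs}}$, and since $\rho_0$ is the trivial representation and the equivalences fix the trivial representation, the base-points match up as well.

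For the bijection of graphs I would then check that edge-multiplicities are preserved. By Lemma above we have $a_{ij}=\dim\Hom(\rho_i,\rho_j\otimes\rho)$, so what must be verified is that each equivalence $\Phi$ respects these Hom-dimensions. This is exactly where the clause ``compatible with tensor products'' does its work: $\Phi(\rho_j\otimes\rho)\cong\Phi(\rho_j)\otimes\Phi(\rho)$, and since $\Phi$ is an equivalence it induces isomorphisms on Hom-spaces, so $\dim\Hom(\rho_i,\rho_j\otimes\rho)=\dim\Hom(\Phi(\rho_i),\Phi(\rho_j)\otimes\Phi(\rho))$. Hence the multiplicities $a_{ij}$ computed in $\mathrm{Rep}_k(G)$ agree with those computed downstream, so the same integer labels the edges between the corresponding vertices and the identification of vertex sets upgrades to an isomorphism of (multi)graphs. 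One should be mild about the coefficient fields when comparing $\dim_k$ with $\dim_{K_N}$ etc., but since the equivalences are $k$-to-$K_N$ (resp.\ field-extension) linear on morphism spaces and preserve the decomposition into isotypic pieces, the integer multiplicities are literally the same.

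I do not expect a serious obstacle here: the statement is essentially a formal consequence of having functors that are compatible with simplicity and with tensor products, and every ingredient has already been recorded in Proposition \ref{prop: liftingrepresentation}. The only point requiring a little care is bookkeeping across the intermediate field $K_N$ versus $\overline{K}$ and $\CC$ --- one wants to be sure that passing to a larger field does not create new simple representations or split existing ones, but this is guaranteed because the quoted equivalences are asserted to preserve simplicity at each stage. Once that is granted, the proof is a one-line appeal to functoriality, and the \qed in the statement reflects that the authors regard it as such.
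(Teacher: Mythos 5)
Your argument is correct and is precisely the deduction the paper intends: the statement is given with a \verb|\qed| because the author regards it as an immediate consequence of Proposition \ref{prop: liftingrepresentation}, and you have simply made explicit that the equivalences preserve simplicity (hence the vertex sets), the distinguished representation, and the tensor-product multiplicities $a_{ij}$ (hence the edges). Your remark about tracking dimensions across the fields $k$, $K_N$, $\overline{K}$, $\CC$ is the right point of care and is covered by the ``compatible with degrees'' clause of that proposition.
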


\subsection{McKay correspondence}
We now run through the classical McKay correspondence \cite{McKay} 
in our setting, where we follow \cite[Section 8.3]{Kirillov}.
Let $k$ be an algebraically closed field of characteristic $p\geq0$
and let $G$ be a finite and linearly reductive subgroup scheme
of $\SL_{2,k}$.
\begin{enumerate}
    \item The K-group $K_k(G)$ carries a symmetric bilinear form 
    $$
     ([V],[W])_0 \,:=\,  \dim_k\,\mathrm{Hom}_G(V,W).
    $$
    \item We consider the closed embedding $G\to\SL_{2,k}$ as a 
    2-dimensional 
    representation 
    $\rho:G\to\SL_{2,k}\to\GL_{2,k}$
    and define an operator 
    $$
     A\,:\,K_k(G)\,\to\, K_k(G),\mbox{\qquad} [V]\mapsto [V]\otimes\rho\,.
    $$
    Since $\rho$ is self-dual by Lemma \ref{lem: selfdual}, it follows
    that $A$ is symmetric with respect to $(-,-)_0$, see \cite[Lemma 8.12]{Kirillov}.
%    $$
%      A_{ij} \,=\, \dim_k\Hom_G(\rho_i,\rho\otimes\rho_j) \,=\,
%      \dim_k\Hom_G(\rho_j,\rho\otimes\rho_i) \,=\, A_{ji}.
%    $$
    \item Using $\rho$, we define a symmetric bilinear form on 
    $K_k(G)\otimes_\ZZ\RR$ by
    $$
     ([V],[W]) \,:=\, \left([V],\, (2-A)[W]\right)_0,
    $$
    which is positive semi-definite.
    The class $\delta\in K_k(G)$ of the regular representation 
    of $G$ generates the radical of $(-,-)$.
    \item Let $\{\rho_i\}$ be the set of isomorphism classes of simple
    representations of $G$.
    \begin{enumerate}
    \item
    The McKay graph $\widehat{\Gamma}=\Gamma(G,\{\rho_i\}_i,\rho)$
    is an affine Dynkin diagram
    of type $\widehat{A}_n$, $\widehat{D}_n$ with $n\geq4$, 
    $\widehat{E}_6$, $\widehat{E}_7$
    or $\widehat{E}_8$.
    \item
    After removing the vertex corresponding to the trivial representation
    $\rho_0$ from $\widehat{\Gamma}$, 
    we obtain a finite Dynkin diagram 
    of type $A_n$, $D_n$ with $n\geq4$, $E_6$, $E_7$
    or $E_8$, respectively.
    \end{enumerate}
\end{enumerate}
If $G$ is a finite subgroup of $\SL_2(\CC)$, then these statements are part
of the classical McKay correspondence, see \cite{McKay} or 
\cite[Section 8.3]{Kirillov}.
In our setting of finite and linearly reductive subgroup schemes of $\SL_{2,k}$,
the above claims immediately follow from the classical McKay correspondence 
together with the lifting results Proposition \ref{prop: liftingrepresentation}
and Proposition \ref{prop: McKay Quiver lift}.
We then obtain the following analog of 
McKay's theorem \cite{McKay}.

\begin{Theorem}
\label{thm: main}
 Let $k$ be an algebraically closed field of characteristic $p\geq0$.
 There exists a bijection between
 finite, non-trivial, and linearly reductive subgroup schemes of
 $\SL_{2,k}$ up to conjugation and affine Dynkin diagrams
 of type
 $$
  \begin{array}{lcl}
   \widehat{A}_n, \widehat{D}_n, \widehat{E}_6, \widehat{E}_7, \widehat{E}_8
       &\mbox{\quad if \quad }&    p=0\mbox{\quad or \quad}p\geq7, \\
   \widehat{A}_n, \widehat{D}_n, \widehat{E}_6, \widehat{E}_7 &\mbox{\quad if \quad }& p=5,\\
  \widehat{A}_n, \widehat{D}_n &\mbox{\quad if \quad }& p=3, \mbox{ and} \\
  \widehat{A}_n &\mbox{\quad if \quad }&p=2.
  \end{array}
 $$
\end{Theorem}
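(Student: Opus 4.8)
The plan is to take as the candidate bijection the assignment $G\mapsto\widehat{\Gamma}(G):=\Gamma(G,\{\rho_i\}_i,\rho)$, where $\rho\colon G\to\SL_{2,k}$ is the given embedding regarded as a $2$-dimensional representation. First I would record that this is well defined on conjugacy classes, since conjugate subgroup schemes have isomorphic embedding representations and hence equal multiplicities $a_{ij}$; that the resulting graph is an affine simply-laced Dynkin diagram of one of the listed types is exactly the content of items (1)--(4) preceding the statement, obtained from the positive semi-definite form $(-,-)$ and the classical list through Proposition \ref{prop: liftingrepresentation}.

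The decisive step is to factor this assignment as a composite of two known bijections. On the one hand, Hashimoto's classification \cite{Hashimoto} enumerates the conjugacy classes of finite, non-trivial, linearly reductive subgroup schemes of $\SL_{2,k}$ and, via the canonical lift, matches them bijectively with a subset of Klein's conjugacy classes of finite subgroups of $\SL_2(\CC)$, sending $G$ to the subgroup $G_{\mathrm{can}}(\overline{K})\subset\SL_2(\overline{K})$. On the other hand, the classical McKay correspondence \cite{McKay} is a bijection between Klein's conjugacy classes and the affine simply-laced Dynkin diagrams. By Proposition \ref{prop: McKay Quiver lift}, the McKay graph of $G$ coincides with the McKay graph of $G_{\mathrm{can}}$, so the assignment $G\mapsto\widehat{\Gamma}(G)$ equals the composite of these two bijections. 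It is therefore injective and maps bijectively onto the image of Hashimoto's sublist under the classical correspondence.

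It then remains to identify this image for each characteristic, which I would read off from Hashimoto's classification. For $p=0$ or $p\geq7$ the sublist is all of Klein's list, so every type $\widehat{A}_n,\widehat{D}_n,\widehat{E}_6,\widehat{E}_7,\widehat{E}_8$ occurs. For $p\in\{2,3,5\}$ a binary polyhedral group is linearly reductive precisely when its $p$-Sylow subgroup is normal and abelian, by Lemma \ref{lem: lrequivalence}; the groups failing this are exactly the binary icosahedral group at $p=5$, the binary tetrahedral, octahedral and icosahedral groups at $p=3$, and the binary dihedral, tetrahedral, octahedral and icosahedral groups at $p=2$ (in each case because the relevant Sylow subgroup is either non-normal or a non-abelian generalised quaternion group). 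Translating through the classical correspondence removes $\widehat{E}_8$ for $p=5$, removes $\widehat{E}_6,\widehat{E}_7,\widehat{E}_8$ for $p=3$, and removes $\widehat{D}_n,\widehat{E}_6,\widehat{E}_7,\widehat{E}_8$ for $p=2$, leaving exactly the stated lists; the series $\widehat{A}_n$ persists in every characteristic because it corresponds to $\bmu_{n+1}$, which is diagonalisable and hence linearly reductive.

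The main obstacle I expect is the verification underlying the last paragraph: confirming that the conjugacy classes absent from Hashimoto's list match the missing Dynkin types exactly, with no unexpected new diagram appearing in small characteristic. This amounts to a finite check of the Sylow structure of the five families of binary polyhedral groups against Lemma \ref{lem: lrequivalence}, combined with the classical type assigned to each by \cite{McKay}. Since Hashimoto \cite{Hashimoto} already guarantees that no genuinely new subgroup schemes arise for $p\in\{2,3,5\}$, the characteristic-$p$ picture is forced to agree with the truncated characteristic-zero one, and the compatibilities of Proposition \ref{prop: liftingrepresentation} and Proposition \ref{prop: McKay Quiver lift} ensure that the McKay graph is computed entirely in characteristic zero.
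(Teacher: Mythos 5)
Your proposal is correct and follows essentially the same route as the paper: the paper's proof is exactly the combination of the linearly reductive McKay correspondence established via Proposition \ref{prop: liftingrepresentation} and Proposition \ref{prop: McKay Quiver lift} (reducing the computation of $\widehat{\Gamma}$ to the classical case over $\CC$) with Hashimoto's classification of finite linearly reductive subgroup schemes of $\SL_{2,k}$. You merely make explicit the case-by-case Sylow check in characteristics $2$, $3$, $5$ that the paper delegates to \cite[Theorem 3.8]{Hashimoto}, and that check is accurate.
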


\begin{proof}
If $p=0$, then this is part of the classical McKay correspondence.
If $p>0$, then this follows from the linearly reductive McKay correspondence just discussed together 
with Hashimoto's classification \cite[Theorem 3.8]{Hashimoto}
of finite and linearly reductive subgroup schemes of $\SL_{2,k}$.
\end{proof}

\begin{Remark}
The linearly reductive group schemes corresponding to $\widehat{E}_6$, $\widehat{E}_7$, and
$\widehat{E}_8$ are \'etale and correspond to finite groups of order prime to $p$.
The linearly reductive group scheme corresponding to $\widehat{A}_n$ (resp.
$\widehat{D}_n$) is \'etale if and only if $p\nmid (n+1)$ (resp. $p\nmid (n-2)$).
We refer to \cite[Proposition 4.2]{LiedtkeSatriano} for details.
Thus, even if $p$ is sufficiently large, then one does not obtain a bijection in 
Theorem \ref{thm: main} with finite groups of order to prime to $p$ only.
\end{Remark}

\begin{Remark}
Steinberg \cite{Steinberg} established many properties of the McKay graph
and the McKay correspondence for finite subgroups of $\SU_2(\CC)$
\emph{without} using classification lists.
It seems plausible to obtain a proof of Theorem \ref{thm: main} along these 
lines without using lifting results or classification lists.
\end{Remark}

\section{Linearly reductive quotient singularities}
\label{sec: lrq singularities}

In this section, we recall \emph{linearly reductive quotient singularities}
in the sense of \cite{LMM} and some general results, including
the \emph{canonical lift} of such a singularity over the ring of Witt vectors.
In dimension two, we establish a minimal resolution of singularities using $G$-Hilbert schemes
as in the work of Ishii, Ito, and Nakamura \cite{IshiiCrelle, Ishii, IN}.
We also show that the canonical lift admits a unique minimal and simultaneous
resolution of singularities.
Finally, we discuss \emph{Klein singularities} and their relation to
rational double point singularities.

\subsection{Quotient singularities}
Let $k$ be an algebraically closed field of characteristic $p\geq0$.
If $G$ is a finite and linearly reductive group scheme over $k$,
if $V$ is a finite-dimensional $k$-vector space,
and if $\rho:G\to\GL(V)$ is a linear representation, 
then we define the \emph{$\lambda$-invariant} of $\rho$ 
as in \cite[Definition 2.7]{LMM} to be
$$
\lambda(\rho) \,:=\, \max_{\{{\mathrm{id}}\}\neq\bmu_n\subseteq G} \dim V^{\bmu_n},
$$
where $V^{\bmu_n}$ denotes the subspace of $\bmu_n$-invariants.
As explained in \cite[Remark 2.8]{LMM}, the representation
$\rho$ is faithful if and only if $\lambda(\rho)\neq\dim V$.
Moreover, $\rho$ contains no pseudo-reflections if and only
if $\lambda(\rho)\leq\dim V-2$ and in this case,
the representation $\rho$ is said to be \emph{small}.

\begin{Definition}
\label{def: very small}
 The representation $\rho$ is called 
 \emph{very small} if $\lambda(\rho)=0$.
\end{Definition}

Note that in dimension two the notions of small and very small coincide.
We refer to \cite[Section 3]{LMM} for the classification of  
finite and linearly reductive group schemes that admit a very small
representation.
By \cite[Proposition 6.5]{LMM}, $\lambda(\rho)$ is equal to the dimension
of the non-free locus of the induced $G$-action on the spectrum of the
formal power series ring $k[[V^\vee]]=(k[V^\vee])^\wedge$.
Following \cite[Definition 6.4]{LMM} and using the linearisation result
\cite[Proposition 6.3]{LMM}, we define:

\begin{Definition}
 \label{def: lrq singularity}
 A \emph{linearly reductive quotient singularity} over $k$
 is an isolated singularity that is analytically isomorphic to
 $\Spec k[[V^\vee]]^G$, where $G$ is a finite and linearly reductive group scheme over
 $k$ and where $\rho:G\to\GL(V)$ is a very small representation.
\end{Definition}

\begin{Remark}
We set $U:=\Spec k[V^\vee]$ 
or $U:=\Spec k[[V^\vee]]$ and simply write
$x\in X=U/G$ with the assumptions of
Definition \ref{def: lrq singularity} implicitly understood.
\end{Remark}

Properties of these singularities have been studied  in \cite{LMM}:
for their local \'etale fundamental groups, class groups, and
F-signatures, we refer to \cite[Section 7]{LMM}.

By \cite[Theorem 8.1]{LMM}, a linearly reductive quotient singularity 
determines the finite and linearly reductive group scheme $G$
together with the very small representation $\rho:G\to\GL(V)$
%as in Definition \ref{def: lrq singularity} 
uniquely up to isomorphism and conjugacy, respectively.
It thus makes sense to refer to $\rho(G)\subset\GL(V)$
as the \emph{finite and linearly reductive subgroup scheme of $\GL(V)$ 
associated to the linearly reductive quotient singularity}
$x\in X=U/G$.
In particular, the classification of linearly reductive quotient singularities 
in dimension $d$ is ``the same'' as the classification of very small, finite, 
and linearly reductive subgroup schemes of $\GL_{d,k}$ up to conjugacy.
We refer to \cite[Section 3]{LMM} for details and this classification.

\subsection{Minimal resolution of singularities}
If $x\in X$ is a normal surface singularity, then it 
admits a unique minimal resolution of singularities
$$
  \pi\,:\, Y\,\to\,X.
$$
If $x\in X$ is moreover a rational singularity, then the 
exceptional locus of $\pi$ is a union
of $\PP^1$'s, whose dual intersection graph $\Gamma$ contains no cycles.
The graph $\Gamma$ is called the \emph{type} of $x\in X$.
If the type determines the singularity up to analytic isomorphism,
then the singularity is said to be \emph{taut}.

Over $\CC$, taut singularities have been classified by 
Laufer \cite{Laufer}.
For example, two-dimensional finite quotient singularities 
over $\CC$ are taut.
Since two-dimensional linearly reductive quotient singularities over algebraically
closed fields of positive characteristic are 
F-regular (see \cite[Proposition 7.1]{LMM}), they are taut by Tanaka's
theorem \cite{Tanaka}.

\begin{Remark}
\label{rem: type quotient singularity}
Very small, finite, and linearly reductive subgroup schemes of $\GL_{2,k}$
have been classified in \cite[Theorem 3.4]{LMM}, extending Brieskorn's classification 
\cite[Satz 2.9]{Brieskorn} of small subgroups of $\GL_2(\CC)$.
The types of the associated quotient singularities in terms of this classification are 
given by \cite[Satz 2.11]{Brieskorn}.

Finite and linearly reductive subgroup schemes of $\SL_{2,k}$ are automatically
very small and they
have been classified by Hashimoto \cite[Theorem 3.8]{Hashimoto}, extending 
Klein's classification \cite{Klein} of finite subgroups of $\SL_2(\CC)$
and we refer to \cite{Hashimoto, LiedtkeSatriano} for the types
of the associated quotient singularities
(see also Theorem \ref{thm: classification Klein} below).
\end{Remark}

\subsection{The Ishii-Ito-Nakamura resolution}
\label{subsec: Ishii Ito Nakamura}
In \cite{IN}, Ito and Nakamura showed that if 
$G$ is a finite subgroup of $\SL_2(\CC)$, then a minimal
resolution of singularities of $\CC^2/G$ is provided
by the $G$-Hilbert scheme $\GHilb{G}(\CC^2)$.
Ishii \cite{IshiiCrelle} extended this to quotient singularities
$\CC^2/G$ for $G$ a finite and very small subgroup of $\GL_2(\CC)$
and Ishii and Nakamura \cite{Ishii} extended this to quotient
singularities $U/G$ for $G$ a finite and very small subgroup
of $\GL_2(k)$ of order prime to $p$.

Let $k$ be an algebraically closed field of characteristic $p\geq0$.
 Let $G$ be a very small, finite, and linearly reductive subgroup scheme of 
 $\GL_{2,k}$.
 Set $U:=\Aff^2_k$ or $\widehat{\Aff}_k^2$ 
 and let $x\in X:=U/G$ be the associated linearly reductive
 quotient singularity.
 By \cite{Blume}, there exists a $G$-Hilbert scheme
$\GHilb{G}(U)$ over $k$ that parametrises zero-dimensional
$G$-invariant subschemes $Z\subset U$ 
(so-called \emph{clusters}), such that the $G$-representation on 
$H^0(Z,\OO_Z)$ is the regular representation.
Taking a cluster $Z$ to its $G$-orbit
(see, for example, \cite[Remark 3.3]{Blume}) 
induces a morphism
$$
\pi\,:\,Y\,:=\,\GHilb{G}(U) \,\to\, U/G\,=\,X
$$
over $k$.

\begin{Theorem}
\label{thm: IshiiItoNakamura}
 The morphism $\pi$ is a minimal resolution of singularities.
\end{Theorem}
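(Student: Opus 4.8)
The plan is to verify the three defining features of a minimal resolution separately: that $\pi$ is projective and birational, that $Y=G\mathrm{-Hilb}(U)$ is smooth, and that $\pi$ is minimal. The formal properties are immediate. By Blume's construction \cite{Blume} the cluster functor is representable by a scheme projective over $X=U/G$, so $\pi$ is projective. Since $\rho$ is very small, $\lambda(\rho)=0$ by Definition \ref{def: very small}, so $G$ acts freely on $U\setminus\{0\}$; over the smooth locus $X\setminus\{x\}$ every cluster is then a single free $G$-orbit and $\pi$ is an isomorphism there. In particular $\pi$ is birational, and it remains to show that $Y$ is a smooth surface and that $\pi$ is minimal.

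For smoothness I would realise $Y$ inside an ambient smooth Hilbert scheme. The $G$-action on $U$ induces one on $\mathrm{Hilb}^{|G|}(U)$, and a length-$|G|$ subscheme is $G$-invariant exactly when its point is $G$-fixed; thus $G\mathrm{-Hilb}(U)$ is the locus of clusters inside the fixed locus $\mathrm{Hilb}^{|G|}(U)^{G}$. Since the isomorphism type of a family of $|G|$-dimensional representations of a linearly reductive group scheme is locally constant, the condition that $H^{0}(Z,\OO_{Z})$ be the regular representation is open and closed, so $Y$ is a union of connected components of $\mathrm{Hilb}^{|G|}(U)^{G}$. By Fogarty's theorem $\mathrm{Hilb}^{|G|}(U)$ is smooth of dimension $2|G|$ in every characteristic, so everything reduces to the assertion that the fixed locus of a finite linearly reductive group scheme acting on a smooth scheme $M$ is smooth. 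Granting this, $Y$ is smooth; as it is birational over $X$ to a normal surface and irreducible of dimension two as in the classical case, $Y$ is a smooth surface and $\pi$ is a resolution.

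The main obstacle is precisely this smoothness of the fixed locus when $G$ is \emph{non-reduced}, since one can no longer average over the group as in the prime-to-$p$ case of Ishii and Nakamura \cite{Ishii}. I would instead use the decomposition $G\cong G^{\circ}\rtimes G^{\et}$: the infinitesimal diagonalisable part $G^{\circ}$, a product of $\bmu_{p^{n}}$'s, can be linearised étale-locally at a fixed point, exhibiting $M^{G^{\circ}}$ as the vanishing of the non-trivial weight coordinates and hence smooth; the étale part $G^{\et}$ of order prime to $p$ then acts on the smooth $M^{G^{\circ}}$ with smooth fixed locus by the usual averaging. At bottom it is the semisimplicity of $\mathrm{Rep}_{k}(G)$, that is, linear reductivity rather than tameness, that makes the functor of invariants exact and the fixed locus smooth; this is the very feature recorded in Proposition \ref{prop: liftingrepresentation}.

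For minimality I would show that the exceptional locus of $\pi$ contains no $(-1)$-curve. When $G\subseteq\SL_{2,k}$ it preserves the translation-invariant volume form on $U$, the resolution is crepant, $K_{Y}=\pi^{\ast}K_{X}$, and adjunction forces every exceptional curve to be a $(-2)$-curve; this recovers the theorem of Ito and Nakamura \cite{IN} in positive characteristic. In the general very small case, where $X$ need not be canonical, I would compare with characteristic zero: applying the same fixed-locus argument to the relative Hilbert scheme of the canonical lift over $W(k)$ yields a flat family of smooth surfaces whose geometric generic fibre is the minimal resolution of Ishii \cite{IshiiCrelle}, and since self-intersection numbers vary locally constantly in such a family, the special fibre $Y$ also contains no $(-1)$-curve. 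Either way $\pi$ is the minimal resolution.
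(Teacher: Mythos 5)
The paper's own ``proof'' is a one-line citation: it asserts that the argument of Ishii--Nakamura \cite{Ishii} for very small subgroups of $\GL_2(k)$ of order prime to $p$ goes through verbatim when $G$ is linearly reductive. Your proposal is therefore doing genuinely more work, and most of it is sound and matches what the paper only records later: the realisation of $G\mathrm{-Hilb}(U)$ as an open and closed subscheme of $\mathrm{Hilb}^{|G|}(U)^G$ (the condition on $H^0(Z,\OO_Z)$ is indeed locally constant because $\Rep_k(G)$ is semi-simple), Fogarty's smoothness, and the smoothness of fixed loci of linearly reductive group schemes via linearisation are exactly the content of the Lemma on $\mathrm{Hilb}^{n,G}(U)$ and Lemma \ref{lem: fixed scheme} in Section \ref{sec: Hecke}; your $G^\circ\rtimes G^{\et}$ argument for the latter is a correct alternative to the paper's appeal to Satriano's linearisation. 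The birationality over the free locus is also fine. One point you wave at --- that $Y$ is irreducible, i.e.\ has no components besides the closure of the locus of free orbits --- does require the standard (but not formal) connectedness argument from \cite{IN,Ishii}.

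The genuine gap is in the minimality step, in both branches. For $G\subseteq\SL_{2,k}$ you write ``the resolution is crepant, $K_Y=\pi^*K_X$''; but for a rational double point, \emph{crepant} and \emph{minimal} are equivalent conditions on a resolution, so asserting crepancy of this particular $\pi$ is asserting the conclusion. What preserving the volume form gives you for free is only that $K_X$ is trivial. The missing input is that $Y$ itself carries a nowhere-vanishing $2$-form: the symplectic form on $U$ induces one on $\mathrm{Hilb}^{|G|}(U)$, and its restriction to the fixed locus is again non-degenerate because for linearly reductive $G$ the invariant subspace of a symplectic representation is a symplectic direct summand; only then does adjunction force every exceptional curve to be a $(-2)$-curve. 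In the general very small case the specialisation argument does not close: a $(-1)$-curve in the special fibre $\mathcal{Y}_k$ need not be the specialisation of any curve on $\mathcal{Y}_{\overline{K}}$, so ``self-intersections are locally constant in the family'' applies to nothing a priori. The surjectivity of $\NS(\mathcal{Y}_{\overline{K}})\to\NS(\mathcal{Y}_k)$ on exceptional classes is exactly equivalent to the equality of the numbers of exceptional components, i.e.\ to the minimality you are trying to prove; note that the paper itself only establishes this isometry (in the proof of Theorem \ref{thm: IN}) \emph{after} invoking Theorem \ref{thm: IshiiItoNakamura} on both fibres separately, so your route also risks circularity with that result. To repair it you must either bound the number of exceptional components of $\pi_k$ from above directly (e.g.\ by the fibre analysis of $\pi^{-1}(x)$ via socles of clusters as in \cite{IshiiCrelle,Ishii}, which is what the cited proofs actually do), or use that the type of $U/G$ is known a priori from tautness and the classification (Remark \ref{rem: type quotient singularity}) together with such an upper bound.
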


\begin{proof}
If $p=0$, then this is shown in \cite{Ishii}, extending \cite{IN}.
If $p>0$ and $G$ is of length prime to $p$, then this is shown in \cite{Ishii}.
However, this proof also works if $G$ is linearly reductive and $p$ divides the length
of $G$.
\end{proof}

\begin{Remark}
\label{rem: f-regular}
By \cite[Theorem 11.5]{LMM}, every two-dimensional F-regular singularity
$x\in X$ is a linearly reductive quotient singularity, say $x\in X=U/G$ for
some finite and linearly reductive subgroup scheme $G$ of $\GL_{2,k}$.
By \cite[Theorem 8.1]{LMM}, $G$ and its embedding into $\GL_{2,k}$ are unique
up to isomorphism and conjugacy, respectively.
By Theorem \ref{thm: IshiiItoNakamura}, $\GHilb{G}(U)\to U/G=X\ni x$
is the minimal resolution of singularities.
In particular, every two-dimensional F-regular singularity can be resolved
by a suitable $G$-Hilbert scheme.
\end{Remark}

\subsection{Canonical lifts}
\label{subsec: canonical lift singularity}
Let $k$ be an algebraically closed field of characteristic $p>0$,
let $W(k)$ be the ring of Witt vectors, let $K$ be its field of fractions, and let
$\overline{K}$ be an algebraic closure.
Let $x\in X=U/G$ be a $d$-dimensional 
linearly reductive singularity, where $U=\Aff^d_k$ or $U=\widehat{\Aff}_k^d$.
Next, we set $\calU:=\Aff^d_{W(k)}$ or $\widehat{\Aff}^d_{W(k)}$, respectively.
In Section \ref{subsec: abstract groups}, we recalled the canonical lift 
$\calG_{\mathrm{can}}$ of $G$ over $W(k)$.
By \cite[Proposition 2.9]{LMM}, there exists a unique lift of the $G$-action
from $U$ to $\calU$.
From this, we obtain a flat family 
\begin{equation}
\label{eq: lrq lift}
  \mathcal{X}_{\mathrm{can}}\,:=\,\calU/\calG_{\mathrm{can}} \,\to\, \Spec W(k)\,.
\end{equation}
of linearly reductive quotient singularities over $W(k)$, whose special fibre
over $k$ is $x\in X=U/G$.

\begin{Definition}
 The family \eqref{eq: lrq lift} is called the \emph{canonical lift} of the linearly reductive 
 quotient singularity $x\in X=U/G$.
 \end{Definition}
  
By the Lefschetz principle, the geometric generic fibre of $\mathcal{X}_{\mathrm{can}}$
can be identified with a finite quotient singularity of the form $\CC^d/G_{\mathrm{abs}}$, 
where $G_{\mathrm{abs}}$ is the abstract group associated to $G$
and the embedding of $G_{\mathrm{abs}}\to\GL_d(\CC)$
corresponds to the embedding $G\to\GL_{d,k}$ provided by
Proposition \ref{prop: liftingrepresentation}.
The canonical lift is unique in the following sense:

\begin{Proposition}
  We keep the notations and assumptions.
   Let $W(k)\subseteq R$ be a finite extension of complete DVRs and let $\mathcal{X}\to\Spec R$ 
   be a lift of $x\in X$ that is of the form $\calV/\calG\to\Spec R$ for some flat lift
   $\calG$ of $G$ to $R$ and $\calV\cong\calU\times_RS$.
   Then, there exists a finite extension $R\subseteq S$ of complete DVRs, such that
   \begin{enumerate}
      \item There exists an isomorphism 
      $$
        \calG_{\mathrm{can}} \times_{W(k)}S \,\cong\, \calG\times_RS
      $$
     of group schemes over $S$.
 \item The very small representation $\rho:G\to\GL_{d,k}$ corresponding to the $G$-action on $U$
 lifts uniquely to $\calG_{\mathrm{can}}$ and $\calG$, respectively, and they become conjugate
 over $S$. 
 \item  There is an isomorphism 
  $$
     \mathcal{X}_{\mathrm{can}}\times_{W(k)}S \,\cong\, \mathcal{X}\times_RS
   $$
   of deformations of $x\in X$ over $S$.
 \end{enumerate}
\end{Proposition}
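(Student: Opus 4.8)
The plan is to compare the given lift $\calG$ over $R$ with the canonical lift $\calG_{\mathrm{can}}$, to trivialise their difference after a finite base change, and then to transport the very small action across the resulting isomorphism in order to identify the two quotients. First I base-change the canonical lift to $R$, obtaining a flat lift $\calG_{\mathrm{can},R}$ of $G$. By the classification of lifts in \cite[Section~2.2]{LMM}, the lift $\calG$ differs from $\calG_{\mathrm{can},R}$ only by a twist under the automorphism group scheme of $\calG_{\mathrm{can}}$, a twist that becomes trivial over $\overline{K}$. Indeed, over $\overline{K}$ each generic fibre is a constant group scheme attached to a finite group of order the length of $G$ with normal abelian $p$-Sylow subgroup; by Schur--Zassenhaus each splits, and since the \'etale part acts rigidly on the diagonalisable part, both equal $(\underline{G_{\mathrm{abs}}})_{\overline{K}}$. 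Therefore the trivialising torsor acquires a point over a finite extension; taking $S$ to be the integral closure of $R$ in such an extension, a finite extension of complete discrete valuation rings with residue field $k$, yields the isomorphism
$$
 \calG_{\mathrm{can}} \times_{W(k)} S \,\cong\, \calG \times_R S
$$
of part~(1), where the integrality over $S$ rests on the rigidity of the connected (diagonalisable) and \'etale parts of the two lifts.

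Next I transport the actions. The datum $\mathcal{X} = \calV/\calG$ equips $\calV \cong \calU \times_{W(k)} R$ with a $\calG$-action lifting the very small representation $\rho\colon G \to \GL_{d,k}$, while $\calU$ carries the $\calG_{\mathrm{can}}$-action furnished by \cite[Proposition~2.9]{LMM}, which is the \emph{unique} lift of $\rho$ to $\calG_{\mathrm{can}}$. Writing $\calG_{\mathrm{can},S} := \calG_{\mathrm{can}} \times_{W(k)} S$ and identifying it with $\calG_S := \calG \times_R S$ via part~(1), the $\calG$-action becomes a second $d$-dimensional representation of $\calG_{\mathrm{can},S}$ reducing to $\rho$ on the special fibre. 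By Proposition~\ref{prop: liftingrepresentation} the lift of $\rho$ is unique up to isomorphism, so the two representations agree after conjugation by an element of $\GL_d(\Frac(S))$; since $\calG_{\mathrm{can}}$ is linearly reductive, the $S$-lattice underlying a representation is determined by its isotypic decomposition, so this conjugating element may be taken in $\GL_d(S)$ after a further finite enlargement of $S$. This gives part~(2).

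Finally I assemble the pieces. Combining parts~(1) and~(2) produces a $\calG_{\mathrm{can},S}$-equivariant isomorphism $\calU \times_{W(k)} S \cong \calV \times_R S$ that restricts to the identity on the common special fibre $U$ over $k$. Passing to quotients by the (conjugate) actions gives
$$
 \mathcal{X}_{\mathrm{can}} \times_{W(k)} S \,=\, (\calU \times_{W(k)} S)/\calG_{\mathrm{can},S} \,\cong\, (\calV \times_R S)/\calG_S \,=\, \mathcal{X} \times_R S,
$$
and by construction this isomorphism is compatible with the identifications of special fibres with $x \in X = U/G$; hence it is an isomorphism of deformations of $x \in X$, which is part~(3).

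The main obstacle is the first step: upgrading the isomorphism of generic fibres over $\overline{K}$ to an isomorphism of the finite flat group schemes over the discrete valuation ring $S$ itself, rather than merely over $\Frac(S)$, and arranging a single extension $S$ over which the group-scheme isomorphism, the conjugacy of representations, and therefore the isomorphism of quotients all hold simultaneously. The inputs that make this feasible are the rigidity of the diagonalisable and \'etale constituents of the lifts together with the linear reductivity of $\calG_{\mathrm{can}}$, which controls integral lattices and pins down the representation lift up to conjugacy.
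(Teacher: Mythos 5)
Your proposal is correct and follows essentially the same route as the paper's proof: compare $\calG$ with $\calG_{\mathrm{can},R}$, use the twist/rigidity results of \cite[Section 2.2]{LMM} to make them isomorphic after a finite extension $S$, invoke the unique lifting and conjugacy of the very small representation, and pass to quotients. You simply supply more detail (the torsor-point argument and the integrality of the conjugating matrix) than the paper, which defers these points to the discussion in Section \ref{subsec: abstract groups}.
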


\begin{proof}
Since $\calG$ and $\calG_{\mathrm{can},R}$ are lifts of $G$
to $R$, they become isomorphic after passing to a finite extension $R\subseteq S$,
see also the discussion in Section \ref{subsec: abstract groups}.
There, we also saw that the linear representation $\rho:G\to\GL_{d,k}$ 
lifts uniquely to $\calG$ and $\calG_{\mathrm{can}}$, respectively,
and that they become conjugate over $S$.
From this, we deduce an isomorphism
$\mathcal{X}_{\mathrm{can}}\times_{W(k)}S\cong\mathcal{X}\times_RS$.
\end{proof}

\begin{Remark}
If $d\geq3$, then \cite[Corollary 10.10]{LMM} shows that
a $d$-dimensional linearly reductive quotient singularity $x\in X$ admits
precisely one lift over $W(k)$, namely the canonical lift.
If $d=2$, then linearly reductive quotient singularities usually have positive dimensional deformation
spaces and admit many non-isomorphic
lifts to $W(k)$, see \cite[Section 12]{LMM}.
\end{Remark}

\subsection{Simultaneous resolution of singularities}
\label{subsec: simultaneous resolution}
If $\mathcal{X}\to S$ is a deformation of a rational double point singularity $x\in X$,
then it admits a simultaneous resolution of singularities, but usually only after some
finite base-change $S'\to S$, see \cite{ArtinBrieskorn}.
In the case where $\mathcal{X}\to S$ is a family of rational surface singularities, then
such a finite $S'\to S$ exists if $S$ maps to the so-called \emph{Artin component}
inside the versal deformation space of $x\in X$.
Moreover, due to the existence of flops, 
these simultaneous resolutions (if they exist) are not unique in general.

In the special case where $x\in X$ is a two-dimensional linearly reductive
quotient singularity over some algebraically closed field $k$ of
characteristic $p>0$ and where $\mathcal{X}_{\mathrm{can}}\to\Spec W(k)$ is the canonical
lift of $x\in X$, we will now show that there exists a simultaneous and minimal resolution of singularities over $W(k)$
and that it is unique.
This simultaneous resolution can be most elegantly constructed using 
the Ishii-Ito-Nakamura resolution from Section \ref{subsec: Ishii Ito Nakamura}
in families.

Let $k$ be an algebraically closed field of characteristic $p>0$.
Let $G$ be a finite and linearly reductive group scheme over $k$ 
and let $\rho:G\to\GL_{2,k}$ be a very small representation.
Let $\calG_{\mathrm{can}}\to\Spec W(k)$
be the canonical lift of $G$ and let 
$\widetilde{\rho}:\calG_{\mathrm{can}}\to\GL_{2,W(k)}$ be the lift of $\rho$ to $W(k)$.
Let $\calU:=\Aff^2_{W(k)}$ and let 
$$
 \mathcal{X}_{\mathrm{can}} \,:=\, \calU/\calG_{\mathrm{can}} \,\to\, \Spec W(k)
$$
be the canonical lift of $x\in X$.
By \cite{Blume}, there exists a $\calG_{\mathrm{can}}$-Hilbert scheme
$$
\GHilb{\calG_{\mathrm{can}}}(\calU) \,\to\,\Spec W(k)
$$
that parametrises $\calG_{\mathrm{can}}$-invariant subschemes $Z\subset\calU$ 
that are finite and flat over $W(k)$ (so-called $\calG_{\mathrm{can}}-$\emph{clusters})
and such that the $\calG_{\mathrm{can}}$-representation on 
$H^0(Z,\OO_Z)$ is the regular representation.
Taking such a cluster $Z$ to its $\calG_{\mathrm{can}}$-orbit
(see, for example, \cite[Remark 3.3]{Blume}) 
induces a morphism
\begin{equation}
\label{eq: IIN in families}
\widetilde{\pi}\,:\,
\mathcal{Y}\,:=\,\GHilb{\calG_{\mathrm{can}}}(\calU) \,\to\, \calU/\calG_{\mathrm{can}}\,=\,\mathcal{X}_{\mathrm{can}}
\end{equation}
over $W(k)$.

\begin{Theorem}
\label{thm: IN}
 Keeping the assumptions and notations
 $$
  \widetilde{\pi} \,:\, \mathcal{Y} \,\to\, \mathcal{X}\,\to\,\Spec W(k)
 $$
 is a simultaneous minimal resolution of singularities of the canonical lift
 $\mathcal{X}\to\Spec W(k)$ of the linearly reductive quotient singularity
 $x\in X=U/G$.
 \begin{enumerate}
 \item The simultaneous resolution $\widetilde{\pi}$ is unique up to isomorphism.
 \item The exceptional locus of $\widetilde{\pi}$ is a union of $\PP^1_{W(k)}$'s
 meeting transversally
 and we denote by $\Gamma$ its dual intersection graph.
 \item The special fibre and the generic
 fibre are linearly reductive quotient singularities of type $\Gamma$
 and $\widetilde{\pi}$ identifies the components of the exceptional loci
 of $\widetilde{\pi}_k$ and $\widetilde{\pi}_K$.
 \end{enumerate}
\end{Theorem}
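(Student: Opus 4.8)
The plan is to reduce everything to the fact that the relative $\calG_{\mathrm{can}}$-Hilbert scheme of Blume \cite{Blume} commutes with base change along $\Spec W(k)$, so that $\widetilde{\pi}$ is \emph{fibrewise} the Ishii--Ito--Nakamura resolution. First I would record that forming $\calG_{\mathrm{can}}\mathrm{-Hilb}(\calU)$ is compatible with the base changes $\Spec k\to\Spec W(k)$ and $\Spec\overline{K}\to\Spec W(k)$: on the special fibre it returns $G\mathrm{-Hilb}(U)\to U/G$, which is the minimal resolution by Theorem \ref{thm: IshiiItoNakamura}, and on the geometric generic fibre it returns $G_{\mathrm{abs}}\mathrm{-Hilb}(\CC^2)\to\CC^2/G_{\mathrm{abs}}$, which is the minimal resolution by the classical work of Ito and Nakamura \cite{IN}. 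The identification of the geometric generic fibre with $\CC^2/G_{\mathrm{abs}}$ and the matching of the representation data is the Lefschetz-principle discussion of Section \ref{subsec: abstract groups} together with Proposition \ref{prop: liftingrepresentation}. Since the relative Hilbert scheme is flat over $W(k)$ with these two smooth surfaces as fibres, $\mathcal{Y}\to\Spec W(k)$ is smooth, hence $\mathcal{Y}$ is regular; being proper, birational, an isomorphism away from the singular section, and fibrewise a minimal resolution, $\widetilde{\pi}$ is then a simultaneous minimal resolution in the sense of Artin--Brieskorn \cite{ArtinBrieskorn}.

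For parts (2) and (3) I would consider the exceptional locus $\mathcal{E}:=\widetilde{\pi}^{-1}(\mathrm{Sing})$, which is proper and flat over $W(k)$, and show its fibrewise dual graphs are constant. Both fibres are linearly reductive quotient singularities whose types are read off the pair (group scheme, very small representation) via the Brieskorn--Hashimoto classification recalled in Remark \ref{rem: type quotient singularity}; since Proposition \ref{prop: liftingrepresentation} matches the representation data on the two fibres, the special and generic types agree with one graph $\Gamma$, and in particular the numbers of exceptional components coincide. Flatness of $\mathcal{E}$ then forces each irreducible component to dominate $\Spec W(k)$ with $\PP^1$-fibres, i.e.\ to be a $\PP^1_{W(k)}$ (a $\PP^1$-bundle over the local base $\Spec W(k)$ is trivial); transversality and the tree structure follow fibrewise from rationality of the singularities. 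Restricting each global component $\PP^1_{W(k)}$ to the two fibres identifies the components of the exceptional loci of $\widetilde{\pi}_k$ and $\widetilde{\pi}_K$, which gives (3), while the locally constant intersection pattern gives (2).

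The crux is uniqueness (1). Let $\mathcal{Y}'\to\mathcal{X}_{\mathrm{can}}$ be a second simultaneous minimal resolution. On each fibre it is the minimal resolution, which is unique; so $\mathcal{Y}'_K$ and $\mathcal{Y}_K$ are both the minimal resolution of $\mathcal{X}_{\mathrm{can},K}$ over $K$ and are canonically isomorphic, and the induced birational map $\mathcal{Y}\dashrightarrow\mathcal{Y}'$ over $\mathcal{X}_{\mathrm{can}}$ is an isomorphism on the generic fibre, so its indeterminacy is confined to the special fibre. To upgrade this to a global isomorphism I would use the universal property of the relative $G$-Hilbert scheme: by Theorem \ref{thm: IshiiItoNakamura} the minimal resolution of each fibre is \emph{canonically} its $G$-Hilbert scheme (via the orbit map), so $\mathcal{Y}'$ carries, fibrewise, a universal $\calG_{\mathrm{can}}$-cluster; assembling these into a flat family of clusters over $\mathcal{Y}'$ would produce a classifying morphism $\mathcal{Y}'\to\calG_{\mathrm{can}}\mathrm{-Hilb}(\calU)=\mathcal{Y}$ over $\mathcal{X}_{\mathrm{can}}$ that is an isomorphism on both fibres, hence an isomorphism of schemes flat over $W(k)$.

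I expect the two delicate points to be the base-change compatibility of the relative $G$-Hilbert scheme in the first paragraph and, above all, the global extension of the fibrewise cluster over the special fibre (equivalently, flatness of the universal subscheme over $\mathcal{Y}'$), which is the heart of the uniqueness statement. As a fallback route to (1), I would invoke that linearly reductive quotient surface singularities are taut (Tanaka \cite{Tanaka} in characteristic $p$, Laufer \cite{Laufer} in characteristic zero) together with the triviality of the Artin--Brieskorn monodromy on exceptional components -- already witnessed by the honest $\PP^1_{W(k)}$-components of $\mathcal{Y}$ from (2), which show that no exceptional curve of the special fibre is rigid -- to rigidify the simultaneous resolution over $W(k)$ and conclude $\mathcal{Y}\cong\mathcal{Y}'$.
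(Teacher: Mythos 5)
Your first two paragraphs follow essentially the same route as the paper: identify the fibres of $\calG_{\mathrm{can}}\mbox{-Hilb}(\calU)$ with the fibrewise $G$-Hilbert schemes via \cite[Remark 3.1]{Blume}, invoke Theorem \ref{thm: IshiiItoNakamura} on the special and geometric generic fibres, and match the two dual graphs through the classification recalled in Remark \ref{rem: type quotient singularity}, so that the equality of component counts lets each exceptional $\PP^1_k$ spread out to a $\PP^1_{W(k)}$. (The paper packages the component-matching as the statement that the specialisation maps $\NS(\mathcal{Y}_K)\leftarrow\NS(\mathcal{Y})\to\NS(\mathcal{Y}_k)$ are isometries; your flatness argument for $\mathcal{E}$ is a slightly looser version of the same point.) Parts (2) and (3) are therefore fine.

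The genuine gap is in part (1), and you have correctly located it yourself. Your primary strategy --- endowing an arbitrary simultaneous resolution $\mathcal{Y}'$ with a flat family of $\calG_{\mathrm{can}}$-clusters and using the universal property of the relative $G$-Hilbert scheme to produce a classifying map $\mathcal{Y}'\to\mathcal{Y}$ --- begs the question: an abstract $\mathcal{Y}'$ comes with no cluster data at all, the fibrewise clusters exist only after you have already identified each fibre of $\mathcal{Y}'$ with the corresponding $G$-Hilbert scheme, and gluing them into a subscheme of $\calU\times\mathcal{Y}'$ flat over $\mathcal{Y}'$ is precisely the kind of extension-across-the-special-fibre statement you are trying to prove. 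The fallback via tautness and ``triviality of the Artin--Brieskorn monodromy'' is also not an argument: tautness identifies the two fibres as abstract singularities but says nothing about whether the birational map $\mathcal{Y}\dashrightarrow\mathcal{Y}'$ over $\mathcal{X}_{\mathrm{can}}$ (an isomorphism on generic fibres, hence small) is biregular --- this is exactly where flops could intervene. The paper closes this by a different mechanism: choose a relatively ample invertible sheaf $\mathcal{L}$ on $\mathcal{Y}_K$, use that both specialisation maps $\NS(\mathcal{Y}_K)\to\NS(\mathcal{Y}_k)$ and $\NS(\mathcal{Y}'_K)\to\NS(\mathcal{Y}'_k)$ are isometries (same rank and discriminant, because the special and generic types coincide) to extend $\mathcal{L}$ and $\alpha^*\mathcal{L}$ to relatively ample sheaves on $\mathcal{Y}$ and $\mathcal{Y}'$, and then apply \cite[Theorem 5.14]{Kovacs} to extend the generic-fibre isomorphism $\alpha$ to an isomorphism $\mathcal{Y}'\to\mathcal{Y}$ over $\mathcal{X}_{\mathrm{can}}$. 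Some step of this kind (comparison of ample models, or an MMP-type argument) is needed; without it your proof of (1) is incomplete.
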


\begin{proof}
We recall that we defined $G_{\mathrm{can}}:=\calG_{\mathrm{can},K}$, which
will simplify the notation in the following.
The generic and special fibre of \eqref{eq: IIN in families}
over $K$ and $k$ are isomorphic to
$$
\begin{array}{cccccc}
&\GHilb{G_{\mathrm{can}}}(\calU_K) &\to& \calU_K/G_{\mathrm{can}} &\to&\Spec K\\
\mbox{and \quad}&
\GHilb{G}(U) &\to& U/G &\to&\Spec k,
\end{array}
$$
respectively, by \cite[Remark 3.1]{Blume}.
Now, 
$\GHilb{G_{\mathrm{can},\overline{K}}}(\calU_{\overline{K}}) \to \calU_{\overline{K}}/G_{\mathrm{can},\overline{K}}$
and
$\GHilb{G}(U) \to U/G$,
are minimal resolutions of singularities by Theorem \ref{thm: IshiiItoNakamura}.
Thus, 
$\GHilb{G_{\mathrm{can}}}(\calU_K) \to \calU_K/G_{\mathrm{can}}$
is a resolution of singularities, which is minimal since it is minimal over $\overline{K}$.
Thus, $\mathcal{Y}\to\mathcal{X}\to\Spec W(k)$ is a simultaneous minimal resolution 
of singularities.

The exceptional fibres $\widetilde{\pi}_K$ and $\widetilde{\pi}_k$ of $\widetilde{\pi}$ over $K$ and $k$ 
are unions of $\PP^1$'s that intersect transversally.
The types, that is dual resolution graphs, associated to $\widetilde{\pi}_K$ and $\widetilde{\pi}_k$
are the same (see also Remark \ref{rem: type quotient singularity}) 
and we denote this graph by $\Gamma$.
In particular, the exceptional fibres of $\widetilde{\pi}_K$ and $\widetilde{\pi}_k$ have
the same numbers of irreducible components.
In particular, the specialisation maps of N\'eron-Severi lattices
$$
\NS(\mathcal{Y}_K) \,\leftarrow\, \NS(\mathcal{Y}) \,\to\, \NS(\mathcal{Y}_k)
$$
are isometries of lattices.
These identify the components of the exceptional loci of $\widetilde{\pi}_K$
and $\widetilde{\pi}_k$.
Moreover, given such a component of the exceptional locus of $\widetilde{\pi}_k$,
it is isomorphic to $\PP^1_k$ and it extends uniquely to a $\PP^1_{W(k)}$ 
in the exceptional locus of $\widetilde{\pi}$.
This establishes claims (2) and (3).

It remains to prove claim (1):
Let $\mathcal{Y}'\to\mathcal{X}\to \Spec W(k)$ be a simultaneous resolution
of singularities that coincides with the minimal resolution on special and generic fibres,
respectively.
Let $\alpha:\mathcal{Y}'_K\,\to\,\mathcal{Y}_K$ be an isomorphism over $\mathcal{X}_K$
and choose a relatively (to $\mathcal{X}$)
ample invertible sheaf $\mathcal{L}$ on $\mathcal{Y}_K$.
Then, $\alpha^*\mathcal{L}$ is relatively ample on $\mathcal{Y}_K$.
Since the types of the singularities of $\mathcal{X}_K$ and $\mathcal{X}_k$
are the same, the fibres of
$\mathcal{Y}'_K\to\mathcal{X}_K$ and $\mathcal{Y}'_k\to\mathcal{Y}_k$
contain the same number of exceptional divisors.
Thus, the specialisation
map of N\'eron-Severi lattices $\NS(\mathcal{Y}'_K)\to\NS(\mathcal{Y}'_k)$
is an injective map between two lattices of the same rank and the same discriminant,
whence an isometry.
Similarly, the specialisation
map of N\'eron-Severi lattices $\NS(\mathcal{Y}_K)\to\NS(\mathcal{Y}_k)$
is an isometry.
In particular, we can identify the components of the exceptional loci 
of $\mathcal{Y}_K$ and $\mathcal{Y}_k$.
(If the types of $\mathcal{X}_k$ and $\mathcal{X}_K$ differ, then this specialisation map is 
usually only injective, there are usually more $(-2)$-curves in the special fibre and this is also the
place where the difficulties with flops begin.)
In particular, $\mathcal{L}$ and $\alpha^*\mathcal{L}$ extend to relative ample
invertible sheaves on $\mathcal{Y}$ and $\mathcal{Y}'$, respectively.
By \cite[Theorem 5.14]{Kovacs}, the isomorphism $\alpha$
extends to an isomorphism $\mathcal{Y}'\to\mathcal{Y}$ over $\mathcal{X}$ and 
the claimed uniqueness follows.
\end{proof}

\subsection{Rational double point singularities and Klein singularities}
We specialise Definition \ref{def: lrq singularity} to the following case.

\begin{Definition}
\label{def: Klein singularity}
 A \emph{Klein singularity} is a linearly reductive quotient singularity
 as in Definition \ref{def: lrq singularity} with $\dim V=2$
 and $\det\rho=1$, that is, $\rho$ is a homomorphism of $G$ 
 to $\SL_{2,k}$.
\end{Definition}

In particular, a Klein singularity is a two-dimensional
and linearly reductive quotient singularity, it is a rational surface singularity,
and since $\det(\rho)=1$, it is Gorenstein.
Quite generally, rational and Gorenstein surface singularities
are precisely the \emph{rational double point singularities} 
\cite{ArtinRational}.
We refer to \cite{Durfee} or \cite{Seminaire} for more background
on surface singularities and rational double point singularities.

If $x\in X$ is a rational double point, then its type $\Gamma$ is a 
simply-laced finite Dynkin graph.
In characteristic zero, every rational double point singularity is taut.
In positive characteristic, a Klein singularity is F-regular and thus, taut.
In the case of rational double points,
this also follows from Artin's explicit classification \cite{ArtinRDP}.
On the other hand, rational double point singularities that are not F-regular 
need not be taut.
We have the following relation between rational double point singularities
and Klein singularities in positive characteristic, see 
\cite[Theorem 11.2]{LMM}.

\begin{Theorem}
\label{thm: classification Klein}
Let $k$ be an algebraically closed field of characteristic $p\geq0$.
Let $x\in X$ be a normal surface singularity over $k$.
\begin{enumerate}
\item If $p=0$ or $p\geq7$, then $x\in X$ is a Klein singularity if and only if $x\in X$ is a rational double point singularity.
\item If $p>0$, then $x\in X$ is a Klein singularity if and only if $x\in X$ is a rational double point singularity and F-regular. 
The finite Dynkin graphs of these singularities are of type
$$
  \begin{array}{lcl}
   A_n, D_n, E_6, E_7, E_8
       &\mbox{\quad if \quad }&    p\geq7, \\
   A_n, D_n, E_6, E_7 &\mbox{\quad if \quad }& p=5,\\
   A_n, D_n &\mbox{\quad if \quad }& p=3, \\
   A_n &\mbox{\quad if \quad }&p=2.
  \end{array}
 $$
\end{enumerate}
\end{Theorem}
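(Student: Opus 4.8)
The plan is to prove each equivalence by its two directions, routing everything through the structural results on linearly reductive quotient singularities in \cite{LMM} together with the classifications of Hashimoto \cite{Hashimoto} and Artin \cite{ArtinRDP}.

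First I would dispose of the implication ``Klein singularity $\Rightarrow$ rational double point and F-regular'', which is one direction of both (1) and (2) at once. As already recorded just before the theorem, a Klein singularity $x\in X=U/G$ with $\rho\colon G\to\SL_{2,k}$ is a two-dimensional linearly reductive quotient singularity that is rational, and is Gorenstein because $\det\rho=1$; by Artin \cite{ArtinRational} a rational Gorenstein surface singularity is precisely a rational double point, and by \cite[Proposition 7.1]{LMM} it is F-regular. This settles the ``only if'' direction in both parts.

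For the converse in (2), I would invoke \cite[Theorem 11.5]{LMM}: every two-dimensional F-regular singularity is analytically a linearly reductive quotient $U/G$ for a very small $G\subset\GL_{2,k}$, and by \cite[Theorem 8.1]{LMM} this $G$ together with its embedding is unique up to conjugacy. The essential step is then to promote $G\subset\GL_{2,k}$ to $G\subset\SL_{2,k}$: since $x\in X$ is a rational double point it is Gorenstein, and for such a quotient the Gorenstein property is governed by the determinant character, being equivalent to the triviality of $\det\rho$. Hence $\rho$ factors through $\SL_{2,k}$ and $x\in X$ is a Klein singularity. Combined with the previous paragraph this yields the equivalence in (2). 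To obtain the Dynkin lists, and to deduce (1), I would then compare two classification tables: Hashimoto \cite[Theorem 3.8]{Hashimoto} classifies finite and linearly reductive subgroup schemes of $\SL_{2,k}$ and records the types of the associated Klein singularities, producing exactly the displayed lists for $p=2,3,5$ and $p\geq 7$, while Artin's classification \cite{ArtinRDP} records which rational double points occur and which are F-regular. For (1) it then suffices to observe that $p=0$ is the classical statement, and that for $p\geq 7$ every rational double point is F-regular, so the equivalence of (2) upgrades to the unconditional ``Klein singularity if and only if rational double point''.

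The hard part will be the small-characteristic bookkeeping in $p\in\{2,3,5\}$: matching Hashimoto's list of $\SL_{2,k}$-subgroup schemes against the sublist of F-regular rational double points in Artin's classification, and checking that precisely the tabulated types survive (for instance that $E_8$ drops out for $p\leq 5$ and that the $D$ and $E$ types thin out as $p$ decreases). A secondary technical point is the equivalence ``Gorenstein if and only if $\det\rho=1$'' for a possibly non-reduced linearly reductive $G$, which needs the descent of the dualizing sheaf in the linearly reductive setting rather than the classical finite-group computation; I expect this to follow from the constructions in \cite{LMM}, but it is the one place where the non-reduced case genuinely differs from the \'etale one.
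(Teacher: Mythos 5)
The paper does not prove this theorem at all: it is quoted verbatim from \cite[Theorem 11.2]{LMM}, so there is no internal argument to compare against. Your reconstruction is nevertheless sound and follows the same architecture as the cited proof. The forward direction is exactly as you say and is even spelled out in the paragraph preceding the theorem (rational plus Gorenstein equals rational double point by \cite{ArtinRational}, F-regularity by \cite[Proposition 7.1]{LMM}). For the converse in (2), your route through \cite[Theorem 11.5]{LMM} and \cite[Theorem 8.1]{LMM} is the intended one, and you have correctly isolated the one step that carries real weight: the equivalence ``$U/G$ Gorenstein $\Leftrightarrow\det\rho=1$'' for a very small, possibly non-reduced linearly reductive $G$. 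This is not automatic from the classical finite-group computation, but it is available: very smallness forces $U\to U/G$ to be \'etale in codimension one (the non-free locus has dimension $\lambda(\rho)=0$), so the dualizing sheaf descends and the usual determinant criterion goes through; this is precisely what \cite{LMM} establishes. Two smaller points to make explicit if you write this up: (i) for part (1) with $p\geq7$ you are implicitly using that \emph{every} rational double point in characteristic $p\geq7$ is F-regular, which you should extract from Artin's lists \cite{ArtinRDP} (or cite the known classification of F-regular rational double points) rather than treat as obvious; and (ii) in the small-characteristic bookkeeping, matching Dynkin types between Hashimoto's list and the F-regular sublist of Artin's classification only pins down the singularity because F-regular rational double points are taut (Tanaka \cite{Tanaka}) --- in characteristics $2,3,5$ a given Dynkin type can otherwise correspond to several non-isomorphic singularities, so tautness is what makes the type comparison conclusive.
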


A Klein singularity $x\in X=U/G$ is a linearly reductive quotient singularity and thus,
determines $G$ and $\rho:G\to\SL_{2,k}$ up to isomorphism and conjugacy, respectively.
Thus, the classification of Klein singularities boils down to the classification
of finite and linearly reductive subgroup schemes of $\SL_{2,k}$ and we refer to
Remark \ref{rem: type quotient singularity} for this classification.

\begin{Remark}
Theorem \ref{thm: main} and Theorem \ref{thm: classification Klein} rely on the
classification of finite and linearly reductive subgroup schemes
of $\SL_{2,k}$.
It is therefore no surprise that the classification lists coincide.
\end{Remark}

\section{Hecke correspondences}
\label{sec: Hecke}

Let $x\in X=U/G$ be a Klein singularity over an algebraically closed field $k$ 
of characteristic $p>0$.
In Section \ref{subsec: abstract groups}, we discussed the
canonical lift $\mathcal{X}\to \Spec W(k)$ of $x\in X$ and 
in Section \ref{subsec: simultaneous resolution} we established
a simultaneous minimal resolution of singularities of the canonical lift.
More precisely, this simultaneous resolution was constructed
using $G$-Hilbert schemes extending previous work of Ishii, Ito, and Nakamura
\cite{IshiiCrelle, Ishii, IN}.

In this section, we refine this resolution of singularities as in
the work of Ito and Nakamura \cite{IN} and Nakajima \cite{Nakajima, NakajimaLectures}:
we eventually obtain a bijection between the components
of the minimal resolution of singularities of the Klein singularity $x\in X=U/G$
and the simple and non-trivial representations of $G$ using special
 \emph{Hecke correspondences}.

\subsection{The Ito-Nakamura resolution revisited}
We  first slightly extend the setup of Section \ref{subsec: Ishii Ito Nakamura}.
Let $k$ be an algebraically closed field of characteristic $p>0$ and
let $x\in X=U/G$ be a Klein singularity as in Definition \ref{def: Klein singularity},
that is, we have $U=\Aff^2_k$ and $G$ a very small, finite, and linearly reductive
subgroup scheme of $\SL_{2,k}$.

Let $\{\rho_i\}_{i\in I}$ be the set of isomorphism classes of simple representations of $G$.
Given a finite-dimensional representation $\rho$ of $G$, there exist
non-negative integers $\nu_i\in\ZZ_+$, such that $\rho$ is isomorphic
to $\bigoplus_i\rho_i^{\oplus \nu_i}$ and we combine these into a multi-index
$\nu=(\{\nu_i\})\in\ZZ_{\geq0}^I$.
We set $\dim(\nu):=\sum_i\nu_i\dim\rho_i$, which is the dimension
of the representation associated to $\nu$.

For any integer $n\geq1$, the $G$-action on $U$ induces
a $G$-action on the Hilbert scheme $\Hilb^n(U)$, which 
parametrises zero-dimensional subschemes of length $n$ of $U$. 
We consider the fixed point scheme
$$
\Hilb^{n,G}(U) \,:=\,
\left( \Hilb^n(U) \right)^G,
$$
that is, the largest subscheme of $\Hilb^n(U)$
on which $G$ acts trivially.
%if referee asks: refer to Fogarty for existence of fixed point schemes
It parametrises $G$-invariant and zero-dimensional subschemes of
length $n$ of $U$.
Given $\nu=(\{\nu_i\})_i\in\ZZ_{\geq0}^I$ with $\dim(\nu)=n$, we define
$$
   H^\nu\,=\,\left\{
   Z\,\in\,\Hilb^{n,G}(U)\,|\,
   H^0(Z,\calO_Z)\,\cong\,
   \bigoplus \rho_i^{\oplus \nu_i}\mbox{ as $G$-representation}
   \right\},
$$
which defines a subscheme of $\Hilb^{n,G}(U)$.
Adapting \cite[Lemma 12.4]{Kirillov}, which follows \cite[Section 9]{IN},
to our situation, we have the following.

\begin{Lemma}
We keep the assumptions and notations and let 
$n\geq1$ be an integer.
\begin{enumerate}
    \item The scheme $\Hilb^{n,G}(U)$ is smooth over $k$.
    \item We have a decomposition
    $$
    \Hilb^{n,G}(U) \,=\, \bigsqcup_{\nu}\, H^\nu,
    $$
    where the disjoint union runs over all multi-indices $\nu\in\ZZ_{\geq0}^I$ 
   of dimension $n$.
    Each $H^\nu$ is a smooth subscheme of $\Hilb^{n,G}(U)$.
\end{enumerate}
\end{Lemma}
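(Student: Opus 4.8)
The plan is to deduce both statements from two inputs: the smoothness of the ambient Hilbert scheme, and the fact that the scheme-theoretic fixed locus of a finite and linearly reductive group scheme acting on a smooth scheme is again smooth. Since $U=\Aff^2_k$ is a smooth surface, the Hilbert scheme $\mathrm{Hilb}^n(U)$ is smooth over $k$ of dimension $2n$ by Fogarty's theorem. The group scheme $G\subseteq\SL_{2,k}$ acts on $U$, hence functorially on $\mathrm{Hilb}^n(U)$, and $\mathrm{Hilb}^{n,G}(U)$ is by definition the fixed locus of this action, which is representable by a closed subscheme since $\mathrm{Hilb}^n(U)$ is quasi-projective. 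Thus part~(1) reduces to the assertion that this fixed locus is smooth, and part~(2) to an analysis of the universal cluster.

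For the smoothness of the fixed locus I would argue by the infinitesimal lifting criterion, using linear reductivity exactly where the classical argument divides by $|G|$. Write $M:=\mathrm{Hilb}^n(U)$, let $A'\onto A$ be a small extension of local Artinian $k$-algebras with kernel $I$, and let $\xi:\Spec A\to M^G$ be an $A$-point. Regarding $\xi$ as a point of $M$ and using smoothness of $M$, there is a lift $\widetilde\xi:\Spec A'\to M$, and the set of all such lifts is a torsor under the finite-dimensional $k$-linear $G$-representation $V:=\Hom_A(\xi^{*}\Omega_{M/k},I)$ with its induced $G$-action. Choosing $\widetilde\xi$ as base point yields a $1$-cocycle $g\mapsto g\cdot\widetilde\xi-\widetilde\xi\in V$, and a $G$-invariant lift exists if and only if its class in $H^1(G,V)$ vanishes. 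Because $G$ is linearly reductive, $\mathrm{Rep}_k(G)$ is semisimple and hence $H^1(G,V)=0$; equivalently, the Reynolds operator of $G$ averages $\widetilde\xi$ to a $G$-invariant lift. Such a lift factors through $M^G$, so $M^G$ is formally smooth, and being of finite type it is smooth over $k$. The same averaging identifies $T_\xi(M^G)=(T_\xi M)^G$.

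For part~(2) I would exploit the universal cluster. Over $M^G=\mathrm{Hilb}^{n,G}(U)$ the universal subscheme $\mathcal{Z}\subset U\times M^G$ is stable under the $G$-action on $U$ (trivial on the base), so its finite flat pushforward $\mathcal{E}:=p_*\calO_{\mathcal{Z}}$ is a $G$-equivariant locally free sheaf of rank $n$. By linear reductivity the functors $\shHom_G(\rho_i,-)$ are exact and commute with base change, so $\mathcal{E}$ decomposes $G$-equivariantly as $\bigoplus_i\rho_i\otimes_k\mathcal{M}_i$ with each multiplicity sheaf $\mathcal{M}_i:=\shHom_G(\rho_i,\mathcal{E})$ locally free, and its rank at a point $[Z]$ equals $\dim_k\Hom_G(\rho_i,H^0(Z,\calO_Z))=\nu_i$. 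Since ranks of locally free sheaves are locally constant, the assignment $[Z]\mapsto\nu$ is locally constant, each $H^\nu$ is open and closed in $\mathrm{Hilb}^{n,G}(U)$, and the claimed disjoint union decomposition follows. Each $H^\nu$ is then smooth as an open-and-closed subscheme of the smooth scheme $M^G$.

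The main obstacle is the smoothness of the fixed locus in the second paragraph: in positive characteristic $G$ may be non-reduced, so one cannot simply average over the finite group $G(k)$. The whole point is to replace $\tfrac1{|G|}\sum_{g\in G}$ by the vanishing $H^1(G,V)=0$ afforded by semisimplicity of $\mathrm{Rep}_k(G)$ (equivalently, by the Reynolds operator of a linearly reductive group scheme); everything else is the standard Ito--Nakamura bookkeeping, carried over via the equivalences of Proposition~\ref{prop: liftingrepresentation}. A secondary point to verify carefully is the compatibility of the isotypic decomposition of $\mathcal{E}$ with base change, which again rests on linear reductivity and ensures that the fibrewise representation type is read off correctly by the locally free sheaves $\mathcal{M}_i$.
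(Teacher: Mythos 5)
Your proof is correct, and for the decisive step --- smoothness of the fixed locus $\mathrm{Hilb}^n(U)^G$ --- it takes a genuinely different route from the paper. The paper isolates that step as a separate statement (Lemma \ref{lem: fixed scheme}) and proves it by completing at a fixed point and \emph{linearising} the $G$-action on $\widehat{\calO}_{X,x}\cong k[[u_1,\dots,u_d]]$ via Satriano's linearisation theorem \cite[proof of Corollary 1.8]{Satriano}; once the action is linear, the fixed subscheme is the formal neighbourhood of a linear subspace and smoothness is immediate. You instead run the infinitesimal lifting criterion and kill the obstruction to a $G$-invariant lift by $H^1(G,V)=0$, which is where linear reductivity enters for you, whereas for the paper it enters through the linearisation theorem; both ultimately rest on the exactness of the invariants functor. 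Your argument is more self-contained (no appeal to Satriano) and yields the tangent-space identity $T_\xi(M^G)=(T_\xi M)^G$ along the way, but it requires one point of care that you only gesture at: since $G$ may be non-reduced, the ``cocycle'' $g\mapsto g\cdot\widetilde\xi-\widetilde\xi$ must be read functorially as an element of $V\otimes_k H^0(G,\calO_G)$ and the relevant cohomology is Hochschild cohomology of the group scheme, whose degree-one vanishing is indeed equivalent to semisimplicity of $\mathrm{Rep}_k(G)$ --- averaging over $G(k)$ would not suffice. For part (2) the paper simply declares the claim obvious; your argument via the $G$-equivariant isotypic decomposition of $p_*\calO_{\calZ}$ into locally free multiplicity sheaves with locally constant ranks is the honest justification that each $H^\nu$ is open and closed in $\mathrm{Hilb}^{n,G}(U)$, and is essentially the bookkeeping of \cite[Section 9]{IN} and \cite[Lemma 12.4]{Kirillov} that the paper cites implicitly.
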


\begin{proof}
Since $U$ is two-dimensional, $\Hilb^n(U)$ is smooth over $k$ by
\cite[Theorem 2.4]{Fogarty}.
To show Claim (1), it remains to show that the fixed point scheme for the
$G$-action is also smooth,
which follows from Lemma \ref{lem: fixed scheme} below.
Then, Claim (2) is obvious.
\end{proof}

\begin{Lemma}
\label{lem: fixed scheme}
Let $k$ be an algebraically closed field,
let $X$ be a scheme that is smooth over $k$,
let $G$ be a finite and linearly reductive group scheme over $k$,
and assume that $G$ acts on $X$.
Then, the fixed point scheme $X^G\subseteq X$ is smooth over $k$.
\end{Lemma}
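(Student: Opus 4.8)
The plan is to prove that the fixed point scheme $X^G$ is smooth by reducing to a local, infinitesimal statement and then linearising the $G$-action via the semisimplicity guaranteed by linear reductivity. First I would recall the functorial definition of $X^G$: for a $k$-scheme $T$, the $T$-points of $X^G$ are exactly the $G$-equivariant maps from $T$ (with trivial $G$-action) to $X$, equivalently the $T$-points of $X$ fixed by $G$. Smoothness is local, so I would fix a closed point $x \in X^G$ and work in the complete local ring $\widehat{\OO}_{X,x}$; the $G$-action (being an action by a finite group scheme fixing $x$) induces an action on this complete local ring and on its maximal ideal $\mm_x$, hence on the cotangent space $\mm_x/\mm_x^2$.

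The key step is a local linearisation: since $G$ is finite and linearly reductive, I would use that the representation of $G$ on $\mm_x/\mm_x^2$ is semisimple and that this linearisation lifts, so that $\widehat{\OO}_{X,x}$ is $G$-equivariantly isomorphic to a formal power series ring $k[[t_1,\dots,t_d]]$ on which $G$ acts linearly through its action on the cotangent space. This is the analogue of the linearisation result \cite[Proposition 6.3]{LMM} cited earlier in the excerpt; the smoothness of $X$ at $x$ supplies the regular system of parameters, and linear reductivity lets one split the filtration $G$-equivariantly to promote the linear action on the cotangent space to a linear action on the whole formal neighbourhood. Once the action is linear, the fixed subscheme is simply the vanishing locus of $(g-\mathrm{id})t_j$ for $g \in G$, and because the representation decomposes as a direct sum of the invariant subspace and its complement, the fixed ring $k[[t_1,\dots,t_d]]^{G,\mathrm{triv}}$ is itself a formal power series ring in the coordinates on the invariant subspace $(\mm_x/\mm_x^2)^G$. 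Hence $\widehat{\OO}_{X^G,x}$ is regular, giving smoothness at $x$.

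To package this cleanly I would verify that taking invariants commutes with the relevant operations, which is exactly where semisimplicity does the work: for a linearly reductive $G$, the invariants functor is exact, so passing to the $G$-fixed subscheme commutes with the completion and with the splitting of $\mm_x/\mm_x^2$ into isotypic pieces. In particular the tangent space to $X^G$ at $x$ is the $G$-invariant part of the tangent space to $X$ at $x$, and I would check that this dimension equals the embedding dimension of $X^G$ at $x$, which yields regularity by the standard criterion. Since this holds at every closed point and smoothness over a field is detected at closed points, $X^G \to \Spec k$ is smooth.

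The main obstacle I anticipate is the local linearisation step in positive characteristic: one cannot average over $G$ as in the classical (order prime to $p$) case, so the lift of the linear action on the cotangent space to the full formal neighbourhood must be obtained purely from the exactness of the invariants functor for linearly reductive $G$ rather than from a Reynolds operator constructed by averaging. I would therefore lean on the existing linearisation machinery of \cite{LMM} rather than reprove it, and the remaining care is bookkeeping: ensuring the $G$-equivariant splittings are compatible across the filtration $\mm_x^n/\mm_x^{n+1}$ so that the coordinates can be chosen $G$-semi-invariant, after which the computation of the fixed ring is routine.
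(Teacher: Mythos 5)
Your proposal is correct and follows essentially the same route as the paper: complete at a point of $X^G$, invoke the linearisation result for linearly reductive group schemes (the paper cites the proof of Corollary 1.8 in Satriano's paper, which underlies \cite[Proposition 6.3]{LMM}) to put the action in linear form on $k[[u_1,\dots,u_d]]$, and observe that the fixed point scheme of a linear action is the linear subspace cut out by the non-trivial isotypic coordinates, hence smooth. The only caveat is notational: write the local ring of $X^G$ at $x$ as the quotient of $k[[t_1,\dots,t_d]]$ by the ideal generated by the non-invariant coordinates, not as an invariant subring, since the latter would describe $X/G$ rather than $X^G$ (your surrounding description of the vanishing locus shows you intend the former).
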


\begin{proof}
Let $x\in X^G$.
Passing to the completion of the local ring $\calO_{X,x}$,
using that $X$ is smooth over $k$, and passing to
coordinates such that the $G$-action is linear
(this is always possible by \cite[proof of Corollary 1.8]{Satriano}),
we may assume that 
$$
\widehat{\calO}_{X,x} \,\cong\, k[[u_1,...,u_d]]
$$
and that the $G$-action is linear.
In this description it is easy to see that the $G$-invariant subscheme
of $\Spec\widehat{\calO}_{X,x}$ is smooth
(see also the proof of \cite[Lemma 9.1]{IN}), which implies that
$X^G$ is smooth near $x$.
\end{proof}

An important special case is the \emph{regular representation} of $G$,
where we have $\nu_i=\dim\rho_i$ for all $i$ and in  this case, 
we will write $\delta$ for the corresponding multi-index.
The dimension of $\delta$ is equal to the length of $G$.
In this case, we have 
$$
   \pi\,:\, H^\delta \,=\, \GHilb{G}(U) \,\to\, U/G \,=\,X,
$$
which we already studied in Section \ref{subsec: Ishii Ito Nakamura}.
There, we saw that $\pi$ is a minimal resolution
of singularities of the Klein singularity $x\in X=U/G$.

\subsection{Hecke correspondences}
We keep the assumptions and notations of the previous section.
We let $n$ be the length of $G$ and let $\delta$ be the multi-index
corresponding to the regular representation.
For $i\in I$, we set $\alpha_i:=(0,...,0,1,0...)$ with the non-zero entry in 
the $i$.th position.
We define
$$
 B_i \,:=\, \left\{ J_1 \,\in\, H^{\delta-\alpha_i},\quad J_2\,\in\, H^\delta\quad |\quad J_2\subseteq J_1 \right\}
 \,\subseteq\, H^{\delta-\alpha_i}\times H^\delta,
$$
which is a \emph{Hecke correspondence},
see \cite{Nakajima}, \cite[Section 6.1]{NakajimaLectures} or \cite[Section 12.4]{Kirillov}.
We let
$$
   E_i \,:=\, \mathrm{pr}_2(B_i)\,\subseteq\, H^\delta \,=\, \GHilb{G}(U)
$$
be the image under the projection $\mathrm{pr}_2$ onto the second factor.

In characteristic zero, the following result is due to Nakajima \cite{Nakajima}
and independently to Ito and Nakamura \cite{IN}, see also 
\cite[Section 12.4]{Kirillov}.

\begin{Theorem}
\label{thm: bijection simple and exceptional}
 Let $k$ be an algebraically closed field of characteristic $p\geq0$.
 Let $G$ be a finite and linearly reductive subgroup scheme of $\SL_{2,k}$,
 let $x\in X:=U/G$ be the associated Klein singularity, and 
 let $\pi:H^\delta\to X$ be the Ito-Nakamura resolution.
 \begin{enumerate}
 \item The assignment $i\mapsto E_i$ defines a bijection
 between the set $\{\rho_i\}_i$ of isomorphism classes of 
 non-trivial simple representations  of $G$ 
 and the set of irreducible components of the exceptional divisor $\mathrm{Exc}(\pi)$ 
 of $\pi$.
 \item If $i\neq j$, then  
  $$
    B_i\cdot B_j \,=\, A_{ij}
  $$
  where the numbering of the $\{\rho_i\}_i$ is as in
  \cite[Section 10]{IN}.
  \end{enumerate}
We thus obtain a bijection between the dual resolution graph of $\pi$
and the graph obtained by removing the vertex $\rho_0$ from the
 McKay graph of $G$ with respect to the embedding $G\to\SL_{2,k}\to\GL_{2,k}$.
\end{Theorem}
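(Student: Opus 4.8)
The plan is to reduce the entire statement to the classical theorem of Nakajima \cite{Nakajima} and Ito-Nakamura \cite{IN} over $\CC$, using the canonical lift together with the equivalence of representation categories of Proposition \ref{prop: liftingrepresentation}, in the same spirit as the proof of Theorem \ref{thm: IN}. Recall from Section \ref{subsec: simultaneous resolution} that we have the canonical lift $\mathcal{X}_{\mathrm{can}} = \calU/\calG_{\mathrm{can}} \to \Spec W(k)$ and the simultaneous minimal resolution $\widetilde{\pi} \colon \mathcal{Y} = \calG_{\mathrm{can}}\mbox{-Hilb}(\calU) \to \mathcal{X}_{\mathrm{can}}$, whose special fibre is the Ito-Nakamura resolution $\pi \colon H^\delta \to X$ and whose geometric generic fibre is, by the Lefschetz principle, the classical resolution of $\CC^2/G_{\mathrm{abs}}$, since $G_{\mathrm{can},\overline{K}}$ is the constant group scheme $\underline{G_{\mathrm{abs}}}$.

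First I would lift the whole Hecke apparatus to a construction over $W(k)$. Using Blume's relative $\calG_{\mathrm{can}}$-Hilbert schemes and the relative Hilbert scheme $\mathrm{Hilb}^n(\calU)$, I would form the relative fixed-point scheme $\mathrm{Hilb}^{n,\calG_{\mathrm{can}}}(\calU)$ and its isotypic decomposition $\bigsqcup_\nu \mathcal{H}^\nu$. The indexing multi-indices $\nu$ are labelled by the simple representations, and Proposition \ref{prop: liftingrepresentation} supplies a canonical bijection of these labelling sets across the fibres over $k$, $K_N$, and $\CC$; on the generic fibre this is the numbering of \cite[Section 10]{IN}. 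Linear reductivity of $\calG_{\mathrm{can}}$ over $W(k)$ ensures that the isotypic decomposition of $H^0(Z,\calO_Z)$ commutes with base change, so each $\mathcal{H}^\nu$ is flat (indeed smooth, by the relative form of Lemma \ref{lem: fixed scheme}) over $W(k)$ with the expected fibres $H^\nu$ and $H^\nu_{\overline{K}}$. I would then define the relative Hecke correspondences $\mathcal{B}_i \subseteq \mathcal{H}^{\delta-\alpha_i}\times_{W(k)}\mathcal{H}^\delta$ and their images $\mathcal{E}_i := \mathrm{pr}_2(\mathcal{B}_i)$, flat over $W(k)$, recovering the $B_i, E_i$ of the statement on the special fibre and their classical counterparts on the generic fibre.

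Next I would apply the classical result on the geometric generic fibre: there $i \mapsto E_{i,\overline{K}}$ is a bijection onto the irreducible components of $\mathrm{Exc}(\widetilde{\pi}_{\overline{K}})$ and $B_{i,\overline{K}}\cdot B_{j,\overline{K}} = A_{ij}$ (see also \cite[Section 12.4]{Kirillov}). To descend to characteristic $p$ I would invoke Theorem \ref{thm: IN}(3): the simultaneous resolution identifies the components of $\mathrm{Exc}(\widetilde{\pi}_K)$ and $\mathrm{Exc}(\widetilde{\pi}_k)$ through the specialisation isometry of N\'eron-Severi lattices. Each generic component $E_{i,K}$ extends uniquely to a $\PP^1_{W(k)}$ in $\mathcal{Y}$ whose special fibre is a component of $\mathrm{Exc}(\pi)$; identifying this with the intrinsically defined $E_i$ gives the bijection (1). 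For (2), the pairing $B_i\cdot B_j$ is a fibrewise intersection number of cycles in a proper flat family over the DVR $W(k)$, hence locally constant, so its value on the special fibre equals the value $A_{ij}$ on the generic fibre.

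The hard part will be the base-change bookkeeping of the second paragraph: checking that the fixed-point scheme, the decomposition $\bigsqcup_\nu \mathcal{H}^\nu$, and the correspondences $\mathcal{B}_i$ all commute with passage to the fibres over $k$ and $\overline{K}$, and---most delicately---identifying the special fibre $\mathcal{E}_{i,k}$ with the $E_i$ built directly in characteristic $p$. This is precisely where linear reductivity of $\calG_{\mathrm{can}}$ over $W(k)$ is indispensable, as it is what makes the isotypic decomposition, and thus the entire Hecke formalism, flat over the base. Granting this, claim (3) is immediate: the McKay graph $\widehat{\Gamma}$ has the $\{\rho_i\}$ as vertices with $A_{ij}$ edges, so deleting the trivial vertex matches its edge data with the intersection pairing on $\mathrm{Exc}(\pi)$, i.e.\ with the dual resolution graph $\Gamma$.
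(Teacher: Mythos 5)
Your proposal coincides with the paper's second proof of this theorem: reduce to the classical Ito--Nakamura/Nakajima result over $\CC$ via the canonical lift, the simultaneous resolution of Theorem \ref{thm: IN}, and the equivalence of representation categories of Proposition \ref{prop: liftingrepresentation}. The paper is terser exactly at the point you rightly flag as delicate---it simply invokes ``the comparison results'' to transfer the Hecke data across the family, whereas you spell out the relative Hecke correspondences over $W(k)$ and their base-change compatibility---and the paper additionally records an independent first proof, namely that the case-by-case analysis of \cite[Section 12]{IN} carries over verbatim to linearly reductive subgroup schemes.
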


\begin{proof}
The case-by-case analysis of \cite[Section 12]{IN} carries directly over
from finite subgroups of $\SL_2(\CC)$ to finite and linearly reductive subgroup
schemes of $\SL_{2,k}$.
\end{proof} 

Here is a second proof of this theorem that uses our lifting results:

\begin{proof}
If $p=0$, then the Lefschetz principle allows to reduce to $k=\CC$,
where the theorem is due to Ito, Nakajima, and Nakamura \cite{IN, Nakajima}.

We now assume $p>0$.
Let  $\mathcal{X}_{\mathrm{can}}=\calU/\calG_{\mathrm{can}}\to\Spec W(k)$
be the canonical lift of $x\in X=U/G$
(see Section \ref{subsec: canonical lift singularity})
and let $\widetilde{\pi}:\mathcal{Y}\to\mathcal{X}_{\mathrm{can}}\to\Spec W(k)$ 
be the simultaneous resolution of singularities
(see Section \ref{subsec: simultaneous resolution}).
Moreover, we have the abstract group $G_{\mathrm{abs}}$
and the canonical lift $G_{\mathrm{can}}$
associated to $G$ (see Section \ref{subsec: abstract groups})
and we have a bijection of simple representations of 
$G_{\mathrm{abs}}$, $G_{\mathrm{can}}$, and $G$
by Proposition \ref{prop: liftingrepresentation}.

The generic fibre $\mathcal{X}_K$ is isomorphic
to $\calU_K/G_{\mathrm{can}}$ and using the Lefschetz principle,
the geometric generic fibre can be identified with $\CC^2/G_{\mathrm{abs}}$.
Over $\CC$, we have Theorem \ref{thm: bijection simple and exceptional}
for $G_{\mathrm{abs}}$, $\CC^2/G_{\mathrm{abs}}$, and its minimal resolution of singularities
by \cite{IN, Nakajima}.

By the Lefschetz principle, we have it 
for $G_{\mathrm{can}}$, 
$\mathcal{X}_K=\calU_{K}/G_{\mathrm{can}}$ 
and $\widetilde{\pi}_K$.
Using the comparison results 
Proposition \ref{prop: liftingrepresentation} and Theorem \ref{thm: IN},
we obtain it for
$\mathcal{X}_k=\calU_k/\calG_{\mathrm{can},k}$ 
and $\widetilde{\pi}_k$, that is,
for $X=U/G$, $G$ and $\pi$.
\end{proof}

\subsection{Local McKay correspondence}
\label{subsec: local McKay}
We now extend work of Ishii and Nakamura \cite{IshiiCrelle, Ishii} to our linearly 
reductive setting.
Let $G$ be a very small, finite, and linearly reductive subgroup scheme of $\GL_{2,k}$
and let $x\in X=U/G$ be the associated two-dimensional linearly reductive quotient
singularity.
Until the end of this section, we do not require it to be a Klein singularity.
Note that in dimension two, linearly reductive quotient singularities
are the same as F-regular singularities, see Remark \ref{rem: f-regular}.

Let $\{\rho_i\}_i$ be the set of simple representations of $G$.
Let $\rho_0$ be the trivial representation and we choose our numbering of the $\rho_i$'s 
to be the one of \cite[Theorem 3.6]{Ishii}.
Let $\pi:Y\to X$ be its minimal resolution
of singularities and let $\{E_i\}$ be the irreducible components of the
exceptional divisor $\textrm{Exc}(\pi)$ of $\pi$.
Then, there is exists a connection between the $\{\rho_i\}$ and the
exceptional divisors of $\pi$ as follows - this is yet another version of the 
McKay correspondence.

\begin{Theorem}
\label{thm: exc pi}
 Keeping assumptions and notations, let $\idealm\subset\OO_U$ be the maximal
 ideal corresponding to the origin, let $y\in Y$ be a closed point, let $Z_y$ be
 the $G$-invariant cluster of $U$ corresponding to $y$, and let $\calI_{Z_y}\subset\OO_U$
 be its ideal sheaf.
 Then, the $G$-representation on $\calI_{Z_y}/\idealm\calI_{Z_y}$ is given by
 $$
    \left\{
    \begin{array}{ll}
    \rho_i\,\oplus\,\rho_0 &\mbox{ \quad if }y\in E_i\,\backslash\,\bigcup_{j\neq i}E_j\\
    \rho_i\,\oplus\,\rho_j\,\oplus\,\rho_0  & \mbox{ \quad if }y\in E_i\cap E_j\mbox{ \quad with \quad }i\neq j.
    \end{array}
    \right. 
 $$
\end{Theorem}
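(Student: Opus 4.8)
The plan is to follow the lifting-based proof of Theorem~\ref{thm: bijection simple and exceptional}: I will deduce the statement from the characteristic-zero result of Ishii and Nakamura \cite{IshiiCrelle, Ishii} for the abstract group $G_{\mathrm{abs}}$ by transporting it along the simultaneous resolution of the canonical lift. If $p=0$, the Lefschetz principle reduces us to $k=\CC$ with $G$ a finite group, where the theorem is exactly \cite[Theorem 3.6]{Ishii}. So assume $p>0$ and, using Section~\ref{subsec: canonical lift singularity} and Section~\ref{subsec: simultaneous resolution}, form the canonical lift $\mathcal{X}_{\mathrm{can}} = \calU/\calG_{\mathrm{can}} \to \Spec W(k)$ together with the simultaneous minimal resolution $\widetilde{\pi}\colon \mathcal{Y} = \calG_{\mathrm{can}}\mbox{-Hilb}(\calU) \to \mathcal{X}_{\mathrm{can}}$ of Theorem~\ref{thm: IN}. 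On $\mathcal{Y}\times_{W(k)}\calU$ sits the universal $\calG_{\mathrm{can}}$-cluster $\calZ$ with ideal sheaf $\calI_{\calZ}$; restricting $\calI_{\calZ}$ along the $\calG_{\mathrm{can}}$-fixed origin section $\idealm\subset\OO_{\calU}$ and pushing forward to $\mathcal{Y}$ yields a $\calG_{\mathrm{can}}$-equivariant coherent sheaf $\calF$ supported on the exceptional locus, whose fibre at a closed point $y$ is precisely the representation $\calI_{Z_y}/\idealm\calI_{Z_y}$ to be computed.

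Next I would run the comparison across the two fibres. Over the geometric generic fibre I identify $\mathcal{X}_{\overline{K}}$ with $\CC^2/G_{\mathrm{abs}}$ and $\widetilde{\pi}_{\overline{K}}$ with its minimal resolution; there \cite{IshiiCrelle, Ishii} gives $\calF_{\overline{K}}\cong\rho_{\mathrm{abs},i}\oplus\rho_{\mathrm{abs},0}$ at a smooth point of $E_i$ and $\rho_{\mathrm{abs},i}\oplus\rho_{\mathrm{abs},j}\oplus\rho_{\mathrm{abs},0}$ on $E_i\cap E_j$. By Theorem~\ref{thm: IN}(3) the specialisation of N\'eron--Severi lattices identifies the exceptional components in the generic and special fibres, so it remains only to check that the $G$-representation type of $\calF$ is unchanged under specialisation from $K$ to $k$. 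Granting this, Proposition~\ref{prop: liftingrepresentation}, which matches $\rho_{\mathrm{abs},i}\leftrightarrow\rho_i$ and respects direct sums, yields the asserted description in characteristic $p$.

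The hard part is precisely this last point, since forming $\calI_{Z_y}/\idealm\calI_{Z_y}$ is \emph{not} a flat operation and the minimal number of generators of $\calI_{Z_y}$ (equivalently $\dim_k\calF_y$) could a priori jump under specialisation. I would control it by upper semicontinuity of $y\mapsto\dim_k\calF_y$ on $\mathcal{Y}$ together with the fact, from Theorem~\ref{thm: IN}, that the dual graph $\Gamma$ is the same for $\widetilde{\pi}_K$ and $\widetilde{\pi}_k$, so the combinatorial structure of $\mathcal{Y}$ near an identified component $E_i$ is locally constant and leaves no room for a jump. The geometric meaning of the non-trivial summand is transparent and serves as a check: for $y$ a smooth point of $E_i$, the Hecke correspondence $B_i$ of Section~\ref{sec: Hecke} exhibits a $G$-invariant $W\subseteq Z_y$ with $\calI_W/\calI_{Z_y}\cong\rho_i$, producing the summand $\rho_i$, while the remaining trivial summand is forced once the generic computation is pulled back.

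An alternative that sidesteps the flatness bookkeeping altogether is to observe that the case-by-case analysis of \cite[Section~3]{Ishii} uses only the semisimplicity of $\mathrm{Rep}_k(G)$ and the cluster geometry of $G\mbox{-Hilb}(U)$, both of which are available in our setting by Theorem~\ref{thm: IshiiItoNakamura} and Proposition~\ref{prop: liftingrepresentation}; this analysis then transcribes essentially verbatim from finite subgroups of $\GL_2(\CC)$ to finite and linearly reductive subgroup schemes of $\GL_{2,k}$. I expect the lifting route to be cleaner, with the specialisation of $\calF$ the only genuine obstacle.
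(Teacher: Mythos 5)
Your closing ``alternative'' is in fact the paper's entire proof: the paper simply records that the statement is \cite[Theorem 7.1]{IshiiCrelle} over $\CC$ and \cite[Theorem 3.6]{Ishii} for a very small finite subgroup of $\GL_2(k)$, and observes that those proofs go through unchanged when $G$ is a very small, finite, and linearly reductive subgroup scheme of $\GL_{2,k}$. So if you lead with that observation, you have reproduced the paper's argument. (One small point of hygiene: this theorem lives in the $\GL_{2,k}$ setting, where the singularity need not be Klein, so the Hecke correspondences $B_i$ of Section \ref{sec: Hecke}, which were set up for $G\subseteq\SL_{2,k}$, are not available as stated for your ``geometric check''.)

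Your preferred route, via the canonical lift and the simultaneous resolution of Theorem \ref{thm: IN}, is genuinely different from the paper's, and as you yourself flag, it has a gap at the specialisation step that your proposed fix does not close. The function $y\mapsto\dim_{k(y)}\calF\otimes k(y)$ is upper semicontinuous, so specialising from the generic fibre of $E_i\cong\PP^1_{W(k)}$ to a closed point $y$ of $E_{i,k}$ only gives $\dim_k\calI_{Z_y}/\idealm\calI_{Z_y}\geq\dim\rho_i+1$; the inequality you need to rule out is a jump \emph{upwards} at the special point, and semicontinuity permits exactly that. Constancy of the dual graph $\Gamma$ across the two fibres is a statement about the configuration of exceptional curves and says nothing about the minimal number of generators of the ideal of an individual cluster, so it ``leaves room'' for a jump after all. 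To make the lifting route work you would need an actual flatness or base-change statement for $\calI_{\calZ}\otimes\OO_{\{0\}\times\mathcal{Y}}$ over $W(k)$ (which does not follow from flatness of $\calO_{\calZ}$, since tensoring with the structure sheaf of the origin section is not exact), or an independent upper bound on the number of generators of $\calI_{Z_y}$ at the origin together with a matching of the $G$-isotypic pieces, not just dimensions. Absent such an argument, the lifting route is incomplete, and the verbatim-transcription argument you relegate to a fallback is the one that actually proves the theorem.
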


\begin{proof}
For $k=\CC$, this is \cite[Theorem 7.1]{IshiiCrelle}.
For $k$ algebraically closed of arbitrary characteristic and $G$ a very small and finite
subgroup of $\GL_{2}(k)$ of order prime to $p$, this is \cite[Theorem 3.6]{Ishii}.
However, these proofs also work if $G$ is a very small and finite and linearly reductive
subgroup scheme of $\GL_{2,k}$.
\end{proof}

\begin{Remark}
In \cite{Ishii}, the $G$-quiver structure of $\GHilb{G}(U)$
was studied in the case where $G$ is a very small and finite subgroup of
$\GL_{2}(k)$. 
We leave the extension of these results to the linearly reductive case to the reader.
\end{Remark}

\subsection{Reflexive sheaves on the minimal resolution}
We end this section by shortly digressing on work of Wunram \cite{Wunram},
Ishii and Nakamura \cite{Ishii}, which generalises work of
Artin and Verdier \cite{AV}.
We keep the assumptions and notations from Section \ref{subsec: local McKay}.
Let $F\subset Y$ be the \emph{fundamental divisor} of $\pi$,
see \cite{ArtinRational}.

\begin{Theorem}
\label{thm: reflexive}
Keeping assumptions and notations, there exists a bijection between
\begin{enumerate}
\item the set of irreducible components $\{E_i\}$ of $\mathrm{Exc}(\pi)$ and
\item the set of non-trivial indecomposable full $\calO_Y$-modules $\{\calM_i\}$,
special in the sense that $H^1(Y,\calM_i^{\vee})=0$
\end{enumerate}
This correspondence $\calM_i\mapsto E_i$ is defined by
$$
 c_1(\calM_i)\cdot E_j \,=\, \delta_{i,j}\,.
$$
The rank of $\calM_i$ is equal to $c_1(\calM_i)\cdot F$, the 
multiplicity of $E_i$ in $F$.\qed
\end{Theorem}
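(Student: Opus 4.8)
The plan is to realise the modules $\calM_i$ as tautological bundles on $Y=G\mathrm{-Hilb}(U)$ and then to argue that the proofs of Artin--Verdier \cite{AV}, Wunram \cite{Wunram}, and Ishii--Nakamura \cite{Ishii} carry over verbatim once the classical McKay machinery is replaced by the comparison results of Proposition \ref{prop: liftingrepresentation}. Concretely, the universal $G$-cluster $\calZ\subset Y\times_k U$ yields a $G$-equivariant sheaf $\pi_{Y,*}\calO_\calZ$ on $Y$ which is fibrewise the regular representation; decomposing into isotypical pieces gives $\pi_{Y,*}\calO_\calZ\cong\bigoplus_i\calM_i\otimes_k\rho_i$ with $\calM_i$ locally free of rank $\dim\rho_i$ and $\calM_0\cong\calO_Y$. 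These are the candidate special full sheaves.

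First I would settle the numerics. The class $c_1(\calM_i)$ is supported on $\mathrm{Exc}(\pi)$, and Theorem \ref{thm: exc pi} describes, at each point $y$ of the exceptional locus, precisely which isotypical summands occur in $\calI_{Z_y}/\idealm\calI_{Z_y}$; this local data records along which component $E_j$ the bundle $\calM_i$ degenerates and hence computes the pairing, yielding $c_1(\calM_i)\cdot E_j=\delta_{i,j}$. Since the intersection matrix $(E_i\cdot E_j)$ is negative definite, this characterises each $c_1(\calM_i)$ as the basis dual to $\{E_j\}$, giving injectivity of $i\mapsto\calM_i$ together with the inverse assignment $\calM_i\mapsto E_i$. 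The rank formula $\mathrm{rk}\,\calM_i=\dim\rho_i=c_1(\calM_i)\cdot F$ is then the numerical shadow of the regular-representation property of clusters, evaluated on the fundamental cycle $F$ exactly as in \cite{Wunram}.

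It remains to identify these $\calM_i$ with the non-trivial indecomposable special full sheaves and to prove that this list is exhaustive. Over $\CC$ this is Wunram's theorem together with Ishii--Nakamura's identification of the tautological bundles with the special full sheaves; the arguments are purely homological and rest only on the rationality of $x\in X$ (so that $R^1\pi_*\calO_Y=0$), on the regular-representation property of $G$-clusters, and on the semisimplicity of $\mathrm{Rep}_k(G)$ used to split $\pi_{Y,*}\calO_\calZ$ and to run the orthogonality computations. For finite linearly reductive $G$ every one of these inputs is available: semisimplicity is built into the definition, and Proposition \ref{prop: liftingrepresentation} supplies the character-theoretic identities needed to match multiplicities. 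As a robustness check one may instead specialise from characteristic zero: by Theorem \ref{thm: IN} the canonical lift $\mathcal{X}_{\mathrm{can}}\to\Spec W(k)$ admits a simultaneous minimal resolution $\widetilde\pi:\mathcal{Y}\to\mathcal{X}_{\mathrm{can}}$ on which the tautological sheaves are defined over $W(k)$, the geometric generic fibre is the classical quotient $\CC^2/G_{\mathrm{abs}}$ to which \cite{Wunram, AV} apply, and Theorem \ref{thm: IN}(3) identifies the exceptional components of the two fibres.

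The point that requires care---and the place where positive characteristic could in principle interfere---is the speciality condition $H^1(Y,\calM_i^\vee)=0$, since $H^1$ is not in general stable under specialisation. Here the rigidity of Theorem \ref{thm: IN} is decisive: the special and generic fibres have the \emph{identical} resolution graph $\Gamma$, so the exceptional configuration does not jump, the relevant $h^1$ is forced to be constant along the flat family $\mathcal{Y}\to\Spec W(k)$, and the characteristic-zero vanishing transports to the special fibre. Combined with the numerical normalisation above, this upgrades the injection $i\mapsto\calM_i$ to the asserted bijection and completes the proof.
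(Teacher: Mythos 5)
Your bottom line --- that the proofs of Wunram \cite{Wunram} and of Ishii--Nakamura \cite[Theorem 2.8]{Ishii} use only inputs (rationality of $x\in X$, the regular-representation property of $G$-clusters, semisimplicity of $\Rep_k(G)$ together with Proposition \ref{prop: liftingrepresentation}) that remain available for a linearly reductive $G$ --- is exactly the paper's proof, which consists of precisely this observation and nothing more. The additional scaffolding you build around it, however, contains two genuine problems. The first concerns the numerics. The theorem is stated for a very small linearly reductive subgroup scheme of $\GL_{2,k}$, not of $\SL_{2,k}$ (the section explicitly drops the Klein hypothesis). In that generality the isotypical decomposition of the pushforward of $\calO_{\calZ}$ to $Y$ produces one tautological sheaf $\calM_i$ for \emph{every} irreducible $\rho_i$, and there are in general strictly more non-trivial irreducibles than irreducible exceptional curves (already for cyclic quotients of type $\frac{1}{n}(1,q)$ with $q\neq n-1$). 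The identity $c_1(\calM_i)\cdot E_j=\delta_{ij}$ therefore cannot hold for all $i$: it holds only for the \emph{special} representations, which is the entire content of the ``special'' McKay correspondence that Theorem \ref{thm: reflexive} encodes. As written, your second paragraph asserts a duality between two sets of different cardinalities; the selection of the special sub-collection by the condition $H^1(Y,\calM_i^{\vee})=0$ is not an afterthought but the crux, and Theorem \ref{thm: exc pi} already presupposes the Ishii--Nakamura numbering in which only those representations appear.

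The second problem is the specialisation argument for the vanishing $H^1(Y,\calM_i^{\vee})=0$. Semicontinuity in a flat family over $\Spec W(k)$ allows $h^1$ to jump \emph{up} at the closed point, so vanishing on the geometric generic fibre does not transport to the special fibre, and the constancy of the resolution graph $\Gamma$ in Theorem \ref{thm: IN} controls the exceptional configuration, not the cohomology of an auxiliary sheaf; ``the graph does not jump, hence $h^1$ is constant'' is not an argument. To make that route work one would need constancy of $\chi$ together with control of the other cohomology groups, or the numerical characterisation of speciality --- at which point one is simply rerunning the characteristic-free proof of \cite{Ishii}, which is what the paper actually invokes. Since you present specialisation only as a ``robustness check,'' the proposal survives on its primary line, but that line should then be stated as the proof rather than as a remark, and the claim $c_1(\calM_i)\cdot E_j=\delta_{ij}$ must be restricted to the special sheaves.
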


\begin{proof}
For $k=\CC$, this is \cite{Wunram}.
For $k$ algebraically closed of arbitrary characteristic and $G$ a very small and finite
subgroup of $\GL_{2,k}$ of order prime to $p$, this is \cite[Theorem 2.8]{Ishii}.
However, these proofs also work if $G$ is a very small, finite, and linearly reductive
subgroup scheme of $\GL_{2,k}$.
\end{proof}

Here, a \emph{full} $\OO_Y$-module is as defined in \cite[Definition 2.4]{Ishii}.
By  \cite[Corollary 2.5]{Ishii}, the assignment $\calM\mapsto\pi_*\calM$
sets up a bijection between the set of (indecomposable) 
full $\calO_Y$-modules with the set of (indecomposable) 
reflexive $\OO_X$-modules.
In particular, the above theorem yields a bijection between the set of irreducible
components of $\mathrm{Exc}(\pi)$ and  the set of non-trivial, indecomposable, and
reflexive $\OO_X$-modules.

\begin{Remark}
\label{rem: f-regular Artin Verdier}
The group scheme $G$ plays no r\^{o}le in this theorem and the discussion thereafter.
Probably, these results should be viewed as results 
on two-dimensional F-regular 
singularities (see Remark \ref{rem: f-regular}).
\end{Remark}

\section{Conjugacy classes and Ito-Reid correspondence}
\label{sec: ItoReid}

For a finite group, the number of isomorphism classes of
complex simple representations is equal to the number 
of conjugacy classes.
Thus, one can \emph{choose} a bijection between these two sets.
In particular, for a finite subgroup of $\SL_2(\CC)$
one can \emph{choose} a bijection between the vertices of its 
McKay graph and the conjugacy classes of $G$.
In \cite{ItoReid}, Ito and Reid gave a \emph{canonical}
bijection between these two sets.
In this section, we extend this \emph{Ito-Reid correspondence}
to finite and linearly reductive subgroup schemes of
$\SL_{2,k}$.
The main difficulty is to define a suitable notion of \emph{conjugacy classes}
for finite and linearly reductive group schemes.

\subsection{Conjugacy classes}
\label{subsec: conjugacy classes}

Given a finite group $G_{\mathrm{abs}}$, a
representation $\rho:G_{\mathrm{abs}}\to\GL_n(\CC)$,
and an element $g\in G_{\mathrm{abs}}$, we have the 
trace $\mathrm{tr}(\rho(g))\in\CC$.
This pairing $(\rho,g)\mapsto \mathrm{tr}(\rho(g))$
induces a non-degenerate pairing between
isomorphism classes of 
simple representations of $G_{\mathrm{abs}}$
and conjugacy classes.
Thus, conjugacy classes can be thought of as being
``dual'' to semi-simple representations, which
can be used to give an unusual definition of
conjugacy classes.
This definition generalises to 
finite and linearly reductive group schemes.
More precisely, we make the following definition,
which is inspired by a result
of Serre \cite[Section 11.4]{Serre} and which we discuss
in detail in Appendix \ref{subsec: second approach}.

\begin{Definition}
  \label{def: conjugacy class}
 Let $G$ be a finite and linearly reductive group scheme over an
 algebraically closed field $k$ of characteristic $p\geq0$.
 Then, the \emph{set of conjugacy classes} is defined to be
 $\Spec \CC\otimes K_k(G)$.
\end{Definition} 

We discuss several approaches to conjugacy classes for finite group schemes 
in Appendix \ref{app: conjugacy class} -
there, we hope to convince the reader that Definition \ref{def: conjugacy class}
is the best for the purposes of this article.
For example, by Proposition \ref{prop: conjugacy lift and bijection}
it is compatible with canonical lifts and there is a natural
bijection with the set of conjugacy classes of the abstract group 
$G_{\mathrm{abs}}$ associated to $G$.
Concerning the choice of field $\CC$ in Definition \ref{def: conjugacy class},
we refer to Remarks \ref{rem: conjugacy classes}.

\subsection{An explicit bijection}
For a finite and linearly reductive subgroup scheme $G$ of $\SL_{2,k}$,
we have an associated embedding of the finite group
$G_{\rm abs}$ into $\SL_2(\CC)$.
Let $\{\rho_i\}_i$ be the set of isomorphism classes of
semi-simple representations of $G$ and 
let $\{\rho_{\mathrm{abs},i}\}_i$ be the corresponding
set for $G_{\mathrm{abs}}$ obtained by Proposition \ref{prop: liftingrepresentation}.
The associated McKay graphs
$\Gamma(G,\rho,\{\rho_i\}_i)$ and 
$\Gamma(G_{\mathrm{abs}},\rho_{\mathrm{abs}}, \{\rho_{\mathrm{abs},i}\}_i)$
coincide by Proposition \ref{prop: McKay Quiver lift}
and we denote both by $\widehat{\Gamma}$.
Next, the group $G_{\mathrm{abs}}$ admits a presentation of the form
$$
 G_{\rm abs} \,=\, \left\langle A,B,C \,|\, A^r=B^s=C^t=ABC \right\rangle
$$
for suitable non-negative integers $r,s,t$.
The non-trivial conjugacy classes of $G_{\rm abs}$ can be uniquely represented
by the following elements
$$
ABC,\quad A^i \mbox{ with }1\leq i\leq r-1,
 \quad B^i \mbox{ with }1\leq i\leq s-1,
 \quad C^i \mbox{ with }1\leq i\leq t-1.
$$
This allows to give an explicit bijection between the 
conjugacy classes of $G_{\mathrm{abs}}$ and the vertices of 
$\widehat{\Gamma}$.
We refer to \cite[Section 2]{Kirillov} for details.
Using Proposition \ref{prop: conjugacy lift and bijection}, we obtain 
an explicit bijection between the conjugacy classes of $G$
and the vertices of $\widehat{\Gamma}$.

\subsection{The Ito-Reid correspondence}
\label{subsec: Ito-Reid}
If $G$ is a finite subgroup of $\SL_2(\CC)$, then Ito
and Reid \cite{ItoReid} (see also \cite{Reid}) 
constructed a \emph{canonical} bijection.
Let us run through \cite[Section 2]{Reid} and show that this
can be carried over to our linearly reductive setting:
let $k$ be an algebraically closed field of characteristic $p>0$
and let $G$ be a finite and linearly reductive subgroup scheme
of $\SL_{2,k}$.
\begin{enumerate}
\item Associated to $G$, we have the abstract group $G_{\mathrm{abs}}$.
By Proposition \ref{prop: liftingrepresentation}, an embedding
$G\to\SL_{n,k}$ yields an embedding $G_{\mathrm{abs}}\to\SL_n(\CC)$.
By Proposition \ref{prop: conjugacy lift and bijection}, we can identify
conjugacy classes of $G$ (in the sense of Definition \ref{def: conjugacy class}) 
and conjugacy classes of $G_{\mathrm{abs}}$, which allows
us to define the \emph{age} of a conjugacy class of $G$ 
via the corresponding notion for $G_{\mathrm{abs}}$ as, for example,
in \cite[Section 2]{Reid}.
A conjugacy class of age 1 is called \emph{junior} and if $n=2$,
then all conjugacy classes are junior.
\end{enumerate}

\begin{Remark}
Using the ``toric mechanism'' mentioned in \cite[Section 2]{Reid},
one can define the age directly and without referring to lifts,
but we will not pursue this here.
\end{Remark}

\begin{enumerate}
\setcounter{enumi}{1}
\item Let $x\in X:=U/G$ be the associated Klein singularity. 
We have the canonical lift $\mathcal{X}_{\mathrm{can}}\to \Spec W(k)$ and the 
simultaneous resolution of singularities 
$\widetilde{\pi}:\mathcal{Y}\to\mathcal{X}_{\mathrm{can}}\to\Spec W(k)$ 
by Theorem \ref{thm: IN}.
Passing to geometric generic fibres and using the Lefschetz principle,
we obtain the minimal resolution of singularities of $\CC^2/G_{\mathrm{abs}}$.
The special fibre of $\widetilde{\pi}$, the geometric generic fibre
of $\widetilde{\pi}$ and the minimal resolution of $\CC^2/G_{\mathrm{abs}}$
are \emph{crepant} in the sense of \cite{YPG} and we can identify the
exceptional divisors of these three resolutions with each other,
see Theorem \ref{thm: IN}.
This way, we obtain an Ito-Reid correspondence between
junior conjugacy classes of $G$ and crepant divisors of the resolution
\cite[Theorem 2.1]{Reid}.
\end{enumerate}

\begin{Remark}
It seems reasonable that one can extend this correspondence
to finite and linearly reductive subgroup schemes of $\SL_{n,k}$ with $n\geq3$,
but we will not pursue this here.
\end{Remark}

\section{Derived categories}
\label{sec: outlook}

Let $G$ be a very small, finite, and linearly reductive subgroup scheme of $\GL_{2,k}$,
let $x\in X:=U/G$ be the associated  linearly reductive
quotient singularity, and let $\pi:Y\to X$ be its minimal resolution of singularities.
Gonzalez-Sprinberg and Verdier \cite{GSV} gave an interpretation of the McKay correspondence
as an isomorphism between the K-groups $K^G(U)$ and $K(Y)$.
Kapranov and Vasserot \cite{KV} and Bridgeland, King, and Reid \cite{Bridgeland}  generalised this
to an equivalence of derived categories $\calD^G(U)$ and $\calD(Y)$. 
In this section, we extend this to our setting, following
Ishii, Ito, Nakamura, and Ueda \cite{IshiiCrelle, Ishii, IshiiUeda}.

We have a commutative diagram
 $$
 \xymatrix{
  U\times_k Y \ar[r]^{\pi_U}\ar[d]^{\pi_Y} &U\ar[d]^\varpi\\
   Y \ar[r]^\pi & X.
  }
 $$
By Theorem \ref{thm: IshiiItoNakamura}, the minimal resolution $\pi$ can be constructed by
the Ishii-Ito-Nakamura resolution
$$
 \GHilb{G}(U) \,\to\, U/G
$$
We let $\calZ$ be the universal cluster over $\GHilb{G}(U)$,
we identify $\GHilb{G}(U)$ with $Y$
and then, we have a commutative diagram
  $$
 \xymatrix{
  \calZ \ar[r]^{q}\ar[d]^{p} &U\ar[d]^\varpi\\
   Y \ar[r]^\pi & X.
  }
 $$
Let $\calD(Y)$ be the derived category of coherent
sheaves on $Y$.
Let $\calD^G(U)$ be the derived category of
$G$-equivariant coherent sheaves on $U$.
Following \cite{IshiiCrelle} and \cite{Ishii}, we define two functors
$$
\begin{array}{ccccc}
 \Psi &:& \calD^G(U) &\to& \calD(Y)\\
 \Phi &:& \calD(Y) &\to& \calD^G(U)
\end{array}
$$
by
$$
\Psi(-) \,:=\, [p_*\, \mathbf{L}q^*\, (-)]^G
$$
and
$$
\Phi(-) \,:=\, \mathbf{R}\pi_{U,*} \left(
\OO_{\calZ}^\vee \,\otimes^{\mathbf{L}}\,\pi_Y^*(-\otimes\rho_0)\,\otimes^{\mathbf{L}}\, \pi_U^*K_{U}\right) [2],
$$
where $\OO_{\calZ}^\vee:=\mathbf{R}Hom(\OO_{\calZ},\OO_{Y\times U})$ denotes the dual of $\OO_{\calZ}$,
where $-\otimes\rho_0:\calD(Y)\to\calD^G(Y)$ denotes the functor that attaches the trivial $G$-action, and where
$K_U$ denotes the canonical sheaf of $U$.
We refer to \cite[Section 3.1]{Ishii} for details, conventions, and notations.

\begin{Theorem}
\label{thm: derived}
 Keeping assumptions and notations, 
 $\Phi$ is fully faithful and $\Psi$ is a left adjoint of $\Phi$.
\end{Theorem}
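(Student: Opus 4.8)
The plan is to follow the arguments of Ishii and Nakamura \cite{Ishii} (which themselves build on Bridgeland, King, and Reid \cite{Bridgeland} and Kapranov and Vasserot \cite{KV}) and to observe that the only property of $G$ these arguments require is the semisimplicity of $\mathrm{Rep}_k(G)$, which holds by definition for a linearly reductive $G$. The geometric inputs needed --- that $\pi:Y\to X$ is the minimal resolution realised as $G\mathrm{-Hilb}(U)$, and the structure of the universal cluster $\calZ$ with $p:\calZ\to Y$ finite and flat --- are already available from Theorem \ref{thm: IshiiItoNakamura}, so the task reduces to checking the two categorical assertions.

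First I would establish the adjunction $\Psi\dashv\Phi$. The point is that $\Psi$ and $\Phi$ are Fourier--Mukai-type integral transforms whose kernels are interchanged by Grothendieck--Serre duality. Indeed $\Psi(-)=[p_*\,\mathbf{L}q^*(-)]^G$ uses the kernel $\OO_\calZ$ together with the functor $[-]^G$ of taking $G$-invariants, which is exact and adjoint on both sides to the trivial-action inclusion precisely because $G$ is linearly reductive. On the other side, the appearance of the dual $\OO_\calZ^\vee$, the twist by $K_U$, and the shift $[2]$ in the definition of $\Phi$ is exactly the recipe producing the kernel dual to $\OO_\calZ$ relative to the smooth surfaces $U$ and $Y$. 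Grothendieck duality for the finite flat morphism $p$, combined with relative duality along the projections $\pi_U,\pi_Y$, then yields the adjunction formally, in any characteristic.

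For the fully faithfulness of $\Phi$ I would carry over the fibrewise analysis of \cite{Ishii} essentially verbatim. Its core is the computation of the $G$-representations $\calI_{Z_y}/\idealm\calI_{Z_y}$ attached to points $y\in Y$ (as in Theorem \ref{thm: exc pi}), together with the intersection theory of the Hecke correspondences (Theorem \ref{thm: bijection simple and exceptional}); this identifies the images $\Phi(\OO_y)$ of skyscrapers as a strongly simple collection, which is equivalent to fully faithfulness. Every representation-theoretic step --- the isotypic decompositions, the orthogonality of distinct simple summands, and the exactness of $[-]^G$ --- uses only that $\mathrm{Rep}_k(G)$ is semisimple, so the passage from finite groups of order prime to $p$ to arbitrary finite and linearly reductive $G$ is automatic. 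As an independent check one may reduce to characteristic zero: by Lefschetz and Proposition \ref{prop: liftingrepresentation} the geometric generic fibre of the simultaneous resolution $\widetilde\pi:\mathcal{Y}\to\mathcal{X}_{\mathrm{can}}$ of Theorem \ref{thm: IN} is the classical situation for $G_{\mathrm{abs}}\subset\GL_2(\CC)$, where fully faithfulness is known.

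The main obstacle is the fully faithfulness, and specifically the need to confirm that the technical heart of \cite{Ishii} --- the local cohomology and base-change computations around clusters --- is insensitive to the possible non-reducedness of $G$. The delicate point is that the infinitesimal part $G^\circ$ contributes to the cluster structure and hence to the sheaves $\calI_{Z_y}/\idealm\calI_{Z_y}$, so one must verify that these computations depend genuinely only on semisimplicity and on the regular-representation property of clusters, and not on $G$ being \'etale. I expect a pure specialisation route to be less robust here, because upper semicontinuity of $\Ext$-dimensions could in principle destroy the orthogonality of the $\Phi(\OO_y)$ on the special fibre; this is why I would favour the direct argument, using the lift to $\CC^2/G_{\mathrm{abs}}$ only as a consistency check.
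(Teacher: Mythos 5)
Your proposal takes essentially the same route as the paper: the paper's proof simply cites \cite[Section 6]{IshiiCrelle}, \cite[Proposition 1.1 and Lemma 2.9]{IshiiUeda}, and \cite[Theorem 3.2]{Ishii} and observes that those arguments go through verbatim for a finite linearly reductive subgroup scheme, which is exactly the strategy you outline (your added detail on the duality underlying the adjunction and on the strongly simple collection is a faithful expansion of what those references do, not a different argument). Your closing caution about non-reducedness and semicontinuity is reasonable but does not change the approach.
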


\begin{proof}
For $G$ is a very small subgroup of $\GL_2(\CC)$, this is \cite[Section 6]{IshiiCrelle} and 
\cite[Proposition 1.1 and Lemma 2.9]{IshiiUeda}.
For $k$ algebraically closed of arbitrary characteristic and $G$ a very small and finite
subgroup of $\GL_{2}(k)$ of order prime to $p$, this is \cite[Theorem 3.2]{Ishii}.
However, these proofs also work if $G$ is a very small, finite, and linearly reductive
subgroup scheme of $\GL_{2,k}$.
\end{proof}

\appendix
\section{Hopf algebras}
\label{sec: Hopf}

In the first section of the appendix, we study finite group schemes from the point of view of 
finite-dimensional Hopf algebras.
We also recall the quantum double of a Hopf algebra,  as well as the adjoint and
the extended adjoint representation.
Many results of this section should be well-known to the experts,
but are somewhat scattered over the literature - especially, since
many sources (sometimes implicitly) assume characteristic zero 
or work even over the complex numbers.

\subsection{Generalities}
\label{subsec: Hopf}
If $G$ is a finite group scheme over a field $k$, then the $k$-algebra
$A:=H^0(G,\calO_G)$ is commutative and finite-dimensional as $k$-vector space.
The multiplication $m:G\times G\to G$, the inverse $i:G\to G$, and 
the neutral element $e:\Spec k\to G$
induce $k$-algebra homomorphisms
$m^*:A\to A\otimes_k A$, $i^*:A\to A$, and $e^*:A\to k$,
which turn $A$ into a co-algebra over $k$ with co-multiplication
$m^*$ and antipode $S:=i^*$, and thus,
into a \emph{Hopf algebra} over $k$.
Since $i$ is the inverse of $G$, the antipode $S$ satisfies
$S^2={\rm id}_A$, that is, $A$ is an \emph{involutive} Hopf algebra.

Conversely, if $A$ is a finite-dimensional and commutative Hopf algebra
over $k$, then it is involutive and $\Spec A$ is a finite group scheme over $k$.
Moreover, $A$ is a co-commutative Hopf algebra if and only if $G$ is a 
commutative group scheme.

\begin{Example}
\label{ex: group-like}
 Let $G_{\rm abs}$ be a finite group.
 The group algebra $k[G_{\rm abs}]$ becomes a Hopf algebra 
 by defining the co-multiplication to be $\Delta(g)=g\otimes g$ and the antipode 
 to be $S(g)=g^{-1}$.
 \begin{enumerate}
 \item Clearly, $k[G_{\rm abs}]$ is an involutive and co-commutative Hopf algebra.
 Moreover, $k[G_{\rm abs}]$ is commutative if and only if $G_{\rm abs}$ is commutative.
\item There exist examples of non-isomorphic finite groups $H_{\rm abs}$ and $G_{\rm abs}$,
whose group rings $k[H_{\rm abs}]$ and $k[G_{\rm abs}]$
are isomorphic as $k$-algebras.

However, they are not isomorphic as Hopf algebras:
Recall that an element $x\in B$ in a Hopf algebra $B$
is called \emph{group-like} if $\Delta(x)=x\otimes x$.
The set of group-like elements of $B$ form a group.
If $B=k[G_{\rm abs}]$, then
the group of group-like elements of $B$ is isomorphic to
$G_{\rm abs}$ and thus, recovers the group.
 \item There exists an isomorphism of finite group schemes over $k$
$$
  G\,\cong\,\Spec k[G_{\rm abs}]^*, 
$$
where $k[G_{\rm abs}]^*$ denotes the dual Hopf algebra and where $G$ denotes
the constant group scheme over $k$ associated to $G_{\rm abs}$.
\end{enumerate}
\end{Example}

\subsection{Simplicity}
A Hopf algebra $A$ is \emph{co-semi-simple} 
if its dual Hopf algebra $A^*$ is semi-simple and $A$ is \emph{bi-semi-simple}
if both $A$ and $A^*$ are semi-simple.

Let $G$ be a finite group scheme over $k$
and let $A:=H^0(G,\calO_G)$ be the associated Hopf algebra.
To give a finite-dimensional
and $k$-linear representation $\rho:G\to{\rm GL}(V)$ is the
same as to give an $A$-co-module $V\to V\otimes_k A$,
see \cite[Section 3.2]{Waterhouse} for details.
Using this equivalence, we see that $G$ is linearly reductive if and only if 
$A$ is co-semi-simple.
For the classification of linearly reductive group schemes
(see Section \ref{sec: linearly reductive})
in the language of Hopf algebras we refer to \cite{Chin}.

The following equivalences are probably well-known to the experts.

\begin{Proposition}
\label{prop: hopf simple}
 Let $G$ be a finite group scheme
 over an algebraically closed field $k$ of characteristic $p\geq0$.
 Let $A:=H^0(G,\calO_G)$ be the Hopf algebra associated to $G$.
 \begin{enumerate}
     \item $A$ is semi-simple if and only if 
     $G$ is \'etale.
     \item $A$ is co-semi-simple if and only if $G$ is linearly
     reductive.
     \item $A$ is bi-semi-simple if and only if $G$ is 
     of length prime to $p$.
 \end{enumerate}
\end{Proposition}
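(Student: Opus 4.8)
The plan is to establish (1) by a direct structure-theoretic argument, to reduce (2) to the comodule--module dictionary recalled above, and to deduce (3) by combining the two together with a short length computation.

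For (1), I would use that $A$ is a commutative, finite-dimensional $k$-algebra, hence a finite product of local Artinian $k$-algebras whose residue fields, since $k$ is algebraically closed, all equal $k$. Thus $A$ is semi-simple if and only if each local factor is a field, which happens if and only if $A$ is reduced, i.e.\ $A\cong k^n$. On the other hand $A=H^0(G,\calO_G)$ is reduced precisely when the finite scheme $G=\Spec A$ is reduced; as $k$ is perfect, a reduced finite group scheme is smooth, hence \'etale, while conversely an \'etale group scheme over $k$ has coordinate ring $k^n$ and is in particular reduced. This proves (1).

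For (2), I would invoke the equivalences already recalled before the proposition: a finite-dimensional representation $\rho:G\to\mathrm{GL}(V)$ is the same as an $A$-comodule structure on $V$, and for the finite-dimensional Hopf algebra $A$ an $A$-comodule is the same as an $A^*$-module. Hence $\mathrm{Rep}_k(G)$ is equivalent to the category of finite-dimensional $A^*$-modules. By definition $G$ is linearly reductive exactly when $\mathrm{Rep}_k(G)$ is semi-simple, equivalently when every finite-dimensional $A^*$-module is semi-simple, and since $A^*$ is finite-dimensional this holds if and only if the algebra $A^*$ is semi-simple, that is, if and only if $A$ is co-semi-simple. This is the assertion already stated in the discussion above; the argument only makes it explicit.

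For (3), I would combine the two: $A$ is bi-semi-simple if and only if both $A$ and $A^*$ are semi-simple, which by (1) and (2) means that $G$ is simultaneously \'etale and linearly reductive. It then remains to match this condition with ``length of $G$ prime to $p$''. If the length of $G$ is prime to $p$, then the infinitesimal group scheme $G^\circ$, whose length is a power of $p$ dividing that of $G$, is trivial, so $G=G^{\et}$ is \'etale, and being the constant group scheme attached to the finite group $G(k)$ of order prime to $p$ it is linearly reductive by Maschke's theorem. Conversely, if $G$ is \'etale and linearly reductive, then $\mathrm{Rep}_k(G)$ is the category of $k$-linear representations of $G(k)$, whose semi-simplicity forces $p\nmid\abs{G(k)}$ by Maschke; since the length of an \'etale group scheme equals $\abs{G(k)}$, the length is prime to $p$. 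The whole argument is essentially formal once the two dictionaries are in place; the only point needing care is the length bookkeeping in (3), namely that $G^\circ$ is infinitesimal of $p$-power length so that primality to $p$ forces it to vanish, and I expect this to be the mildest of the obstacles rather than a genuine difficulty.
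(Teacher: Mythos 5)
Your proposal is correct. Part (2) is exactly the paper's argument: both of you observe that $\mathrm{Rep}_k(G)$ is equivalent to finite-dimensional $A^*$-modules, so linear reductivity is semi-simplicity of the algebra $A^*$, i.e.\ co-semi-simplicity of $A$. For part (1) you take a genuinely different and more elementary route: you exploit that $A$ is a \emph{commutative} Artinian $k$-algebra over an algebraically closed field, so that semi-simplicity, reducedness, and $A\cong k^n$ all coincide, and the group structure is barely needed. The paper instead argues Hopf-theoretically: for the forward direction it identifies $A$ with $k[G_{\mathrm{abs}}]^*$ and uses co-semi-simplicity of group algebras, and for the converse it invokes the Hopf-algebra Maschke theorem (non-vanishing of $\langle\varepsilon,\textstyle\int_H^r\rangle$) to conclude that $A$ is separable. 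Your argument is shorter and self-contained; the paper's has the advantage of fitting into the general Hopf-algebra framework used throughout the appendix. For part (3) you carry out in detail the deduction from (1) and (2) together with the connected--\'etale sequence and Maschke's theorem, which is precisely the route the paper declares to ``follow immediately'' before opting instead to cite the Larson--Radford/Etingof--Gelaki theorem that an involutive finite-dimensional Hopf algebra is bi-semi-simple if and only if $p\nmid\dim_k A$; your explicit length bookkeeping (the $p$-power length of $G^\circ$ forcing $G^\circ$ to be trivial, and the converse of Maschke for $G(k)$) is correct and fills in what the paper leaves implicit.
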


\begin{proof}
We already established Claim (2) above.

If $G$ is \'etale, then $A\cong k[G_{\rm abs}]^*$,
where $G_{\rm abs}:=G(k)$ is the abstract group associated to $G$.
Since group rings are co-semi-simple, $A$ is semi-simple.
Conversely, if $A$ is semi-simple, then $\langle \varepsilon,\int_H^r\rangle\neq0$
by Maschke's theorem for Hopf algebras
(see, for example, \cite[Section 12.3.1]{Lorenz} for notation, statement, and proof), 
which implies that $A$ is a separable $k$-algebra,
which implies that $G$ is \'etale over $k$.
This establishes Claim (1).

Of course, (3) follows immediately from (1) and (2), but we can also give an independent
proof: Being an involutive Hopf algebra, $A$ is bi-semi-simple
if and only if $p$ does not divide $\dim_k A$, see 
\cite[Corollary 3.2]{EG} or \cite[Corollary 2.6]{LR}.
The latter is equivalent to $G$ being of length prime to $p$.
This establishes Claim (3).
\end{proof}

\subsection{Adjoint representation}
\label{subsec: adjoint}
Let $G_{\rm abs}$ be a finite group and let $k$ be an algebraically closed field
of characteristic $p\geq0$
($p$ may or may not divide the order of $G_{\rm abs}$).
Then, the action of $G_{\rm abs}$ on itself by conjugation is a permutation representation 
and we denote by 
$$
 \rho_{\rm ad} \,:\, G_{\rm abs} \,\to\, \GL(V_{\rm ad})
$$
the associated $k$-linear representation.

\begin{Remark}
To be more precise:
Depending on whether one considers $x\mapsto gxg^{-1}$ or $x\mapsto g^{-1}xg$,
one should speak about \emph{left} or \emph{right} adjoint actions and representations.
For our discussion, this is not important, as long as one chooses one of them
and stays with it.
\end{Remark}

\begin{Remark}
\label{rem: adjoint rep group}
Let us recall a couple of general results about $\rho_{\rm ad}$, which are
well-known (I thank Frank Himstedt for explaining them to me):
\begin{enumerate}
\item Let $C$ be the set of conjugacy classes of $G_{\rm abs}$ and let $\{g_c\}_{c\in C}$ be a system
of representatives. 
Then, we have an isomorphism of $k$-linear representations
$$
 \rho_{\mathrm{ad}} \,\cong\, \bigoplus_{c\in C}\, \mathrm{Ind}_{C(g_c)}^{G_{\rm abs}}\, \mathbb{1},
$$
where $C(g_c)$ denotes the centraliser of $g_c\in G_{\rm abs}$, where $\mathbb{1}$ denotes
the one-dimensional trivial representation, and where $\mathrm{Ind}$ denotes
induction from a subgroup.
See, for example, \cite[page 171 and Exercise 1.5]{NT}.
\item Let $g\in G_{\rm abs}$ and let
$$
 V\,:=\,\mathrm{Ind}_{C(g)}^{G_{\mathrm{abs}}}\, \mathbb{1} \,\cong\,\bigoplus_i V_i
$$
be a decomposition into indecomposable representations,
which is unique up to isomorphism and numbering.

There is precisely one summand, say $V_{i_1}$, that 
has a subrepresentation that is isomorphic to $\mathbb{1}$.
Moreover, this subrepresentation is unique.
Also, there  is precisely one summand, say $V_{i_2}$, that 
has a quotient representation that is isomorphic to $\mathbb{1}$.
Moreover, this quotient representation is unique.
Then, we have $i_1=i_2$ and set
    $$
       S(V) \,:=\, V_{i_1}\,=\, V_{i_2},
    $$
which is called the \emph{Scott representation} of $V$.
See, for example, \cite[page 296 and Theorem 8.4]{NT}. 
\end{enumerate}

In particular, the dimension of the largest trivial subrepresentation (resp. quotient representation) 
of $\rho_{\rm ad}$ is equal to the number of conjugacy classes of $G_{\rm abs}$.
\end{Remark}

If $A$ is a finite dimensional Hopf algebra over $k$, then there is an \emph{adjoint action}
of $A$ on itself (in fact, a \emph{left} adjoint action and a \emph{right} adjoint action), 
see, for example, \cite[Definition 3.4.1]{Montgomery}.
We will write
$$
 {\rm ad}\,:\,A \,\to\, {\rm End}(A)
$$
or simply ${}^{\rm ad} A$ for this representation.

\begin{Remarks}
\quad
\begin{enumerate}
\item
If $G_{\mathrm{abs}}$ is a finite group, then the adjoint representation
%${}^{\mathrm{ad}}H$ 
of $H:=k[G_{\mathrm{abs}}]$ can be identified with the $k$-linear extension
from $G_{\mathrm{abs}}$ to $H$
of the conjugation action of  $G_{\rm abs}$ on itself.
\item Let $G$ be a finite group scheme over $k$ and let $A:=H^0(G,\OO_G)$ be the 
$A:=H^0(G,\OO_G)$.
There is an adjoint representation 
$$
 \rho_{\rm ad} \,:\,G\,\to\,\GL(V_{\rm ad}),
$$
which corresponds to the adjoint representation of the \emph{dual} Hopf algebra $A^*$.
\begin{enumerate}
\item If $G_{\rm abs}$ is a finite group and $G$ is the constant 
group scheme associated to it, then $A$
 is isomorphic to the dual of
$k[G_{\rm abs}]$ equipped with its usual Hopf algebra structure.
This shows that $\rho_{\rm ad}$ should be defined
via the adjoint representation of $A^*$ rather than $A$.
\item The representation $\rho_{\rm ad}$ should not be confused with the adjoint representation of $G$ 
on its Lie algebra, see for example, \cite[page 100, Exercise 13]{Waterhouse}.
The latter can be identified with a subquotient of $\rho_{\rm ad}$.
\end{enumerate}
\end{enumerate}
\end{Remarks}

\subsection{Quantum doubles}
\label{subsec: quantum doubles}
If $A$ is a finite-dimensional Hopf algebra over a field $k$,
then Drinfeld \cite{Drinfeld} defined a Hopf algebra 
$D(A):=(A^{\rm op})^* \bowtie A$, called
the \emph{quantum double} or \emph{Drinfeld double},
where the bicrossed product structure is defined using 
the co-adjoint representation of $A$ on $A^*$ and the co-adjoint
representation of $A^*$ on $A$, 
see also \cite[Definition 10.3.1]{Montgomery}.

\begin{Remarks}
\label{rem: quantum elementary}
Let $A$ be a finite-dimensional Hopf algebra over $k$
and let $D(A):=(A^{\mathrm{op}})^* \bowtie A$ be its quantum double.
\begin{enumerate}
\item As $k$-vector space, $D(A)$ is of dimension $(\dim_kA)^2$.
\item $A$ is a Hopf subalgebra of $D(A)$ via $\varepsilon\bowtie A$
and $(A^{\mathrm{op}})^*$ is a Hopf subalgebra of $D(A)$ via 
$(A^{\mathrm{op}})^*\bowtie 1$.
\item If $A$ is commutative and co-commutative, then we have
$A^{\mathrm{op}}=A$ and using the definition of the
quantum double, we obtain isomorphisms
$$ 
  D(A)\,\cong\,A^*\otimes_k A
  \,\cong\, D(A^*)
$$ 
of Hopf algebras over $k$, where the tensor product is 
the trivial tensor product of Hopf algebras.
If  we set $G:=\Spec A$, 
then we have $G^D\cong \Spec A^*$, where $-^D$ denotes
the Cartier dual group scheme, and we have
an isomorphism
$$
 \Spec D(A) \,\cong\, G\times_{\Spec k} G^D
$$
of finite and commutative group schemes over $k$.
\item In particular,
$D(A)$ is commutative if and only if $A$ is commutative and co-commutative.
\end{enumerate}
\end{Remarks}

The assertions on simplicity in the next proposition extend results of
Witherspoon \cite[Proposition 1.2]{WitherspoonRepDouble} from groups 
to group schemes.
They are also related to general semi-simplicity 
results of quasi-triangular Hopf algebras in positive characteristic 
due to Etingof and Gelaki \cite{EG} and in this form, they might be known to the experts.

\begin{Proposition}
\label{prop: quantum properties}
 Let $G$ be a finite group scheme over an algebraically closed
 field $k$ of characteristic $p\geq0$ and let 
 $A:=H^0(G,\calO_G)$ be the associated Hopf algebra.
 Then, the following  are equivalent
 
 \begin{tabular}{llll}
 \emph{(1)} & $D(A)$ is semi-simple  & \emph{(1')} & $D(A^*)$ is semi-simple \\  
 \emph{(2)} & $D(A)$ is co-semi-simple \qquad \qquad \ & \emph{(2')} & $D(A^*)$ is co-semi-simple \\
 \emph{(3)} & $A$ is bi-semi-simple & \emph{(3')} & $A^*$ is bi-semi-simple \\
 \emph{(4)} & $G$ is of length prime to $p$ 
 \end{tabular}
\end{Proposition}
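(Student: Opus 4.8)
The plan is to reduce every one of the seven conditions to the single numerical statement $p\nmid\dim_k A$, which by Proposition \ref{prop: hopf simple}(3) is exactly (4) (recall that the length of $G$ equals $\dim_k A$). First I would dispose of the formal equivalences. Bi-semi-simplicity of a Hopf algebra $B$ means that $B$ \emph{and} $B^*$ are semi-simple, so it is manifestly invariant under $B\mapsto B^*$; since $A^{**}\cong A$, conditions (3) and (3') are literally the same condition, and Proposition \ref{prop: hopf simple}(3) identifies them with (4). I would also record, using Remarks \ref{rem: quantum elementary}(1), that $\dim_k D(A)=\dim_k D(A^*)=(\dim_k A)^2$, whence $p\nmid\dim_k D(A)\iff p\nmid\dim_k D(A^*)\iff p\nmid\dim_k A$; this is the bridge through which the dimension criterion is transported to the doubles.

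The heart of the argument is the lemma, valid for any finite-dimensional Hopf algebra $H$, that \emph{$D(H)$ is semi-simple if and only if $H$ is bi-semi-simple}. I would prove this with integrals and the Maschke theorem of Larson--Sweedler (see \cite{Montgomery}): $D(H)$ is semi-simple iff it admits a left integral $t$ with $\varepsilon_{D(H)}(t)\neq 0$. The key structural input is that the Drinfeld double is unimodular with a two-sided integral factoring as $t=\lambda\bowtie\Lambda$, where $\Lambda$ is a left integral of $H$ and $\lambda$ is a left integral of $(H^{\mathrm{op}})^*$ (see \cite{Montgomery}; \cite{WitherspoonRepDouble} for the group case). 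Since the counit of $D(H)=(H^{\mathrm{op}})^*\bowtie H$ is $\varepsilon_{D(H)}(f\bowtie h)=\langle f,1_H\rangle\,\varepsilon_H(h)$, one obtains
\[
  \varepsilon_{D(H)}(t)\,=\,\langle\lambda,1_H\rangle\cdot\varepsilon_H(\Lambda).
\]
Applying Larson--Sweedler to $H$ and to $H^*$, the second factor is nonzero iff $H$ is semi-simple and the first is nonzero iff $H^*$ is semi-simple, i.e. iff $H$ is co-semi-simple. Hence $D(H)$ is semi-simple iff $H$ is bi-semi-simple. Taking $H=A$ yields (1)$\iff$(3), and $H=A^*$ yields (1')$\iff$(3').

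It remains to fold in the co-semi-simple conditions, for which I would use the standard isomorphism $D(H)^*\cong D(H^*)$ of Hopf algebras (up to an $\mathrm{op}/\mathrm{cop}$, which affects neither semi-simplicity nor co-semi-simplicity, as one checks directly from the definitions). Then $D(A)$ is co-semi-simple iff $D(A)^*\cong D(A^*)$ is semi-simple, i.e. iff (1') holds; and $D(A^*)$ is co-semi-simple iff $D(A^*)^*\cong D(A)$ is semi-simple, i.e. iff (1) holds. Combining with the lemma gives (2)$\iff$(1')$\iff$(3') and (2')$\iff$(1)$\iff$(3), so together with the first paragraph all seven conditions collapse to $p\nmid\dim_k A$, that is (4). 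As the remark preceding the proposition suggests, one may instead deduce (1)$\iff$(2) from the Etingof--Gelaki dichotomy that a quasi-triangular Hopf algebra is semi-simple iff co-semi-simple (the double being quasi-triangular by Drinfeld) and then close up via the involutive criterion \cite[Corollary 3.2]{EG}; but that route must first establish that $D(A)$ is involutive, which is precisely the modular-element bookkeeping the integral computation avoids.

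The main obstacle is the lemma, and within it the factorization $t=\lambda\bowtie\Lambda$ together with the unimodularity of $D(H)$: these are the structural facts about the Drinfeld double on which the whole equivalence turns, and verifying or correctly citing them for an arbitrary finite group scheme --- rather than a finite group, as in Witherspoon \cite{WitherspoonRepDouble} --- is where the genuine content lies. Everything else is formal manipulation of integrals, counits, and the behaviour of $D(-)$ under duality.
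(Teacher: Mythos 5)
Your argument is correct, and its logical skeleton coincides with the paper's: the paper likewise disposes of $(3)\Leftrightarrow(3')$ as trivial, invokes Proposition \ref{prop: hopf simple} for $(3)\Leftrightarrow(4)$, and obtains the remaining equivalences $(1)\Leftrightarrow(2)\Leftrightarrow(3)$ and $(1')\Leftrightarrow(2')\Leftrightarrow(3')$ by citing \cite[Corollary 10.3.13]{Montgomery} wholesale. The only substantive difference is that you unpack that citation: you prove the key lemma (semi-simplicity of $D(H)$ is equivalent to bi-semi-simplicity of $H$) via the unimodularity of the double, the factorisation $t=\lambda\bowtie\Lambda$ of its two-sided integral, and Larson--Sweedler Maschke applied to each factor of $\varepsilon_{D(H)}(t)=\langle\lambda,1_H\rangle\,\varepsilon_H(\Lambda)$; and you recover the co-semi-simplicity conditions $(2)$, $(2')$ from the duality $D(H)^*\cong D(H^*)$ (up to op/cop) rather than reading them off the two-sided statement of the cited corollary. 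Both of these ingredients are exactly what underlies Radford's theorem as presented in \cite{Montgomery}, so nothing new is needed beyond correctly quoting the integral factorisation for a general finite-dimensional Hopf algebra (not just a group algebra as in \cite{WitherspoonRepDouble}), which you flag appropriately; your self-contained route buys independence from the black-box citation at the cost of that bookkeeping, and your closing remark about the Etingof--Gelaki alternative and the involutivity of $D(A)$ is a fair assessment of the other available path.
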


\begin{proof}
The equivalences (1) $\Leftrightarrow$ (2) $\Leftrightarrow$ (3) and
(1') $\Leftrightarrow$ (2') $\Leftrightarrow$ (3') are shown in \cite[Corollary 10.3.13]{Montgomery}.
The equivalence (3) $\Leftrightarrow$ (3') is trivial and
finally, the equivalence (3) $\Leftrightarrow$ (4) was shown in Proposition \ref{prop: hopf simple}.
\end{proof}

\subsection{Extended adjoint representation}
\label{subsec: extended adjoint}
Given a Hopf algebra $A$, we defined the adjoint representation
${}^{\mathrm{ad}}A$ in Section \ref{subsec: adjoint} above.
There is a way to extend this to a representation 
$$
\mathrm{Ad} \,:\, D(A) \,\to\, {\rm End}(A)
$$
the \emph{extended adjoint representation} of $A$, which is 
denoted by ${}^{\mathrm{Ad}} A$.
For its definition, we refer to Zhu's article \cite{Zhu}, as well as \cite{CW} for
subsequent work on this representation.
We note that it
can be described as an induced representation
\begin{equation}
\label{eq: Jacoby}
{}^{\mathrm{Ad}}\,A \,\cong\, {\mathrm{Ind}}_A^{D(A)}\,\mathbb{1},
\end{equation}
where $\mathbb{1}$ denotes the trivial one-dimensional representation
and where we consider $A$ as a subalgebra of $D(A)$ via 
$\varepsilon\bowtie A$, see \cite{Burciu} or \cite[Section 4.3.4]{Jacoby}.

\begin{Example}
\label{ex: Jacoby completely reducible} 
 Let $G_{\rm abs}$ be a finite group
 and let $H:=k[G_{\rm abs}]$ be the group algebra equipped with
 its usual Hopf algebra structure.
 The elements $g\in G_{\rm abs}$ form a basis of $H$ as $k$-vector space
 and we denote by $\rho_g\in H^*$ the dual basis elements.
 \begin{enumerate}
 \item The multiplication of $D(H)$ is given by
 $$
   (\rho_h\bowtie g)\cdot(\rho_k\bowtie \ell) \,=\, \delta_{h,gkg^{-1}}\cdot \rho_h\bowtie g\ell,
 $$
 see, for example, \cite[Example 2.4.2]{Jacoby}.
 \begin{enumerate}
 \item The extended adjoint representation ${}^{\rm Ad} H$ 
 is given by
 $$
 (\rho_h\bowtie k)\,\mapsto\,
 (g\,\mapsto\, \delta_{h^{-1},kgk^{-1}}\,\cdot\,
 kgk^{-1}
 )
 $$
 see, for example, \cite[Example 2.5.3]{Jacoby}.
 \item The unit of $H^*$ is $\varepsilon:=\sum_{g\in G_{\rm abs}}\rho_g$ and
 the restriction of ${}^{\mathrm{Ad}}H$ to $\varepsilon\bowtie H$ 
 is  ${}^{\rm ad}H$.
 Thus, the extended adjoint representation 
 extends the adjoint representation from an $A$-representation to 
 a $D(A)$-representation, whence the name.
 \item The extended adjoint representation ${}^{\rm Ad}H$  
 is semi-simple.
 More precisely, the $k$-subvector space of $H$ generated by
 the elements of a conjugacy class of $G_{\mathrm{abs}}$
 is a simple $D(H)$-subrepresentation
 of ${}^{\mathrm{Ad}}H$.
 This gives a bijection between conjugacy classes
 of $G_{\mathrm{abs}}$ and simple subrepresentations of 
 ${}^{\mathrm{Ad}}H$.
 By \cite[Lemma 4.3.2]{Jacoby}, this is also true if $p$ divides the order of 
 $G_{\mathrm{abs}}$.
 \end{enumerate}
 We refer to \cite{Burciu, Dijkgraaf, Gould, WitherspoonRepDouble} 
 for more results about the representation theory of $D(H)$
 and ${}^{\mathrm{Ad}}H$.
 \item 
 If we identify $H^{**}$ with $H$, then
 the multiplication of $D(H^*)$ is given by
  $$
   (g\bowtie \rho_h)\cdot(\ell\bowtie\rho_k) \,=\, 
   \delta_{h,k} \,\cdot\, (g\cdot \ell) \bowtie \rho_h
 $$
 and the extended adjoint representation ${}^{\rm Ad} (H^*)$ 
 is given by
 $$
 (h\bowtie \rho_k)\,\mapsto\,
 (\rho_g\,\mapsto\, \rho_{gh^{-1}}),
 $$
 which can be identified with the dual of the regular representation of $G_{\rm abs}$.
 The restriction of ${}^{\rm Ad}(H^*)$ to $1\bowtie H^*$ is trivial, which is equal
 to the adjoint representation $^{\rm ad}(H^*)$, which is also trivial.
 \end{enumerate}
\end{Example}

\begin{Example}
\label{ex: commutative extended adjoint}
Let $A$ be a finite-dimensional Hopf algebra over $k$ that is
commutative and co-commutative. 
\begin{enumerate}
\item The adjoint representations ${}^{\rm ad}A$ and ${}^{\rm ad}(A^*)$ are trivial.
\end{enumerate}
By Remark \ref{rem: quantum elementary}, we have isomorphisms
$$ 
  D(A)\,\cong\,A^*\otimes_k A
  \,\cong\, D(A^*)
$$ 
of Hopf algebras over $k$, where the tensor product is 
the trivial tensor product of Hopf algebras.
Thus, to give a representation $\rho:D(A)\to\mathrm{End}(V)$ is equivalent to 
giving two representations $\rho_1:A\to\mathrm{End}(V)$ and $\rho_2:A^*\to\mathrm{End}(V)$,
whose actions on $V$ commute.
\begin{enumerate}
\setcounter{enumi}{1}
\item Using the description \eqref{eq: Jacoby} of ${}^{\rm Ad}A$
as induced representation, we obtain ${}^{\rm Ad}A$ from the
the trivial $A$-action on $A$ (this is $\rho_1$) and 
the dual of the regular representation of $A^*$ (this is $\rho_2$).
\item Similarly, we obtain ${}^{\rm Ad}(A^*)$ from the
trivial $A^*$-action on $A^*$ and the 
dual of the regular representation of $A$.
\end{enumerate}
\end{Example}

\section{Conjugacy classes for finite group schemes}
\label{app: conjugacy class}

Let $G$ be a finite group scheme over an algebraically closed field 
$k$ of characteristic $p\geq0$.
In the second section of the appendix, we discuss several approaches toward
the notion of a \emph{conjugacy class} for $G$.
If $p=0$, then all of them lead to the same notion, namely, the familiar one.
All approaches look reasonable at first sight if $p>0$
and they lead to essentially ``the same'' answer if $G$ is 
of length prime to $p$.
In general however, they lead to different notions, all of which have
their merits and drawbacks.
This appendix serves as a motivation for Definition \ref{def: conjugacy class},
but the discussion and results may be interesting in themselves.

\subsection{First approach: via rational points}
Let $G(k)$ be the group of $k$-rational points.
We obtain an equivalence relation $\sim$ on this set
by defining $g_1\sim g_2$ if and only
if there exists $h\in G(k)$ such that $g_1=hg_2h^{-1}$.
The quotient $G(k)/\sim$ is the first candidate
for the set of conjugacy classes.

\begin{Example}
If $G$ is \'etale over $k$, then it is 
the constant group scheme associated
to the finite group $G_{\rm abs}:=G(k)$.
In this case, $G(k)/\sim$ coincides with the set of conjugacy
classes of $G_{\rm abs}$.
\end{Example}

If $G$ is \'etale, which is automatic if $p=0$,
then this approach is satisfactory.
However, if $p>0$, then the connected-\'etale sequence \eqref{connected etale}
induces a bijection
$$
 G(k) \,\cong\, G^{\et}(k),
$$
which is an isomorphism of finite groups.
In particular, $G(k)/\sim$ depends on the maximal \'etale quotient
$G^{\et}$ of $G$ only.
For example, in the extremal case where $G$ is connected, 
we have $G(k)=\{1\}$ and then, $G(k)/\sim$ consists of one element, 
and we do not gain much information about $G$.

Concerning functoriality: if $\varphi:G\to H$ is a homomorphism of
finite group schemes over $k$, then we have induced morphisms
$G(k)\to H(k)$, $G^{\et}\to H^{\et}$, and 
$G(k)/\sim\to H(k)/\sim$.

\subsection{Second approach: via representations and K-theory}
\label{subsec: second approach}
If $G_{\rm abs}$ is a finite group of order
prime to $p$, then the category $\Rep_k(G)$
of $k$-linear and finite-dimensional $G_{\rm abs}$-representations
is semi-simple. 
In this case, the number of isomorphism classes of simple representations 
is equal to the number of conjugacy classes.
More precisely, if $\rho$ is a $k$-linear and finite-dimensional
representation of $G_{\rm abs}$ and $g\in G_{\rm abs}$ is an element,
then 
$$
 (\rho,g) \,\mapsto\,{\rm Tr}\left(\rho(g)\right)\,\in\, k
$$ 
induces a pairing
between representations and conjugacy classes.
Since the character table of a finite group
is a quadratic and invertible matrix
(see, for example, \cite[Proposition I.7]{Serre}),
this pairing is non-degenerate and
one can think of conjugacy classes as being ``dual'' to 
simple representations.

This idea can be made precise as follows:
Let $F$ be a field, let $\mathrm{Cl}(G_{\mathrm{abs}})$ be the set of conjugacy 
classes of $G_{\mathrm{abs}}$, and let $F^{\mathrm{Cl}(G_{\mathrm{abs}})}$ be
the ring of class functions on $G_{\mathrm{abs}}$ with values in $F$.
Let $g$ be the order of $G_{\mathrm{abs}}$, let
$\zeta_g\in\CC$ be a primitive $g$.th primitive root of unity,
and assume that $F$ contains $\QQ(\zeta_g)$.
Since the characters of $G_{\mathrm{abs}}$ take values in $F$
(here, we use $\QQ(\zeta_g)\subseteq F$), we have injective ring homomorphisms
$$
 F \,\to\, F\otimes_\ZZ K_k(G_{\mathrm{abs}}) \,\stackrel{\gamma}{\longrightarrow}\, F^{\mathrm{Cl}(G_{\mathrm{abs}})}
$$
and $\gamma$ is an isomorphism, see \cite[Section 11.4]{Serre} and
the first paragraph of \cite[Section 9.1]{Serre}.
In particular, 
$$
\Spec\, F\otimes_\ZZ K_k(G_{\mathrm{abs}})
$$
is a finite set consisting of maximal ideals only and the topology is discrete.
Since $\gamma$ is an isomorphism, the cardinality of this set is equal to that
of $\mathrm{Cl}(G_{\mathrm{abs}})$.
This carries over to finite and linearly reductive schemes as follows.

\begin{Proposition}
\label{prop: conjugacy lift and bijection}
Let $G$ be a finite and linearly reductive group scheme over $k$
and let $G_{\mathrm{abs}}$ be the associated abstract group.
Let $g$ be the length of $G$, fix a $g$.th root of unity $\zeta_g\in\CC$, and
let $F$ be a field that contains $\QQ(\zeta_{g})$.
Then, there exista canonical bijection of sets
$$
\left\{\begin{array}{l}
\mbox{conjugacy classes of $G_{\rm abs}$}
\end{array}\right\}
\,\to\,
\Spec\, F\otimes_\ZZ K_k(G_{\rm abs})
$$
and homeomorphisms
$$
\Spec\, F\otimes_\ZZ K_k(G_{\rm abs})
\,\leftarrow\,
\Spec\, F\otimes_\ZZ K_{\overline{K}}(G_{\rm abs})
\,\leftarrow\,
\Spec\, F\otimes_\ZZ K_k(G).
$$
\end{Proposition}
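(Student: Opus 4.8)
The plan is to reduce the entire statement to the classical character theory of the finite group $G_{\mathrm{abs}}$ over a field of characteristic zero, and then to transport it along the K-theoretic comparisons established earlier. First I would settle the left-hand bijection. The discussion preceding the statement provides an isomorphism of $F$-algebras $\gamma\colon F\otimes_\ZZ K_k(G_{\mathrm{abs}}) \xrightarrow{\sim} F^{\mathrm{Cl}(G_{\mathrm{abs}})}$. As $F^{\mathrm{Cl}(G_{\mathrm{abs}})}$ is a finite product of copies of the field $F$ indexed by $\mathrm{Cl}(G_{\mathrm{abs}})$, its spectrum is canonically the discrete set $\mathrm{Cl}(G_{\mathrm{abs}})$, the point attached to a class $c$ being the kernel of evaluation at $c$. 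Transporting this along $\Spec\gamma$ yields the canonical bijection $\mathrm{Cl}(G_{\mathrm{abs}})\to\Spec F\otimes_\ZZ K_k(G_{\mathrm{abs}})$ and shows that this spectrum is finite and discrete.

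For the right-hand homeomorphism I would invoke Corollary \ref{cor: Ktheory} together with the identification $G_{\mathrm{can},\overline{K}}\cong(\underline{G_{\mathrm{abs}}})_{\overline{K}}$ from Section \ref{subsec: abstract groups}; since representations of the constant group scheme $\underline{G_{\mathrm{abs}}}$ over $\overline{K}$ are the same as representations of the abstract group, this gives a ring isomorphism $K_k(G)\cong K_{\overline{K}}(G_{\mathrm{abs}})$. Applying $-\otimes_\ZZ F$ and then $\Spec$ (which sends a ring isomorphism to a homeomorphism of spectra) produces the homeomorphism $\Spec F\otimes_\ZZ K_{\overline{K}}(G_{\mathrm{abs}})\leftarrow\Spec F\otimes_\ZZ K_k(G)$. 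The middle homeomorphism is then formal: the classical form of Serre's result \cite{Serre}, applied over the characteristic-zero field $\overline{K}$, gives a second isomorphism $\gamma'\colon F\otimes_\ZZ K_{\overline{K}}(G_{\mathrm{abs}})\xrightarrow{\sim} F^{\mathrm{Cl}(G_{\mathrm{abs}})}$, and $\gamma^{-1}\circ\gamma'$ is a ring isomorphism whose spectrum is the required homeomorphism.

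What remains, and what I expect to be the main obstacle, is the compatibility of all these identifications, so that the whole chain labels each space uniformly by $\mathrm{Cl}(G_{\mathrm{abs}})$ and, reading from right to left, identifies the conjugacy classes of Definition \ref{def: conjugacy class} with those of $G_{\mathrm{abs}}$. Concretely, one must check that $\gamma$, $\gamma'$, and the comparison isomorphism of Corollary \ref{cor: Ktheory} fit into a commuting triangle over $F^{\mathrm{Cl}(G_{\mathrm{abs}})}$. This amounts to verifying that the equivalences of Proposition \ref{prop: liftingrepresentation} preserve characters, i.e.\ that a representation of $G$ over $k$ and its transport to $G_{\mathrm{abs}}$ have equal trace functions; this is precisely the content of the character table of $G$ being defined through $G_{\mathrm{abs}}$, and it follows from the compatibility of those equivalences with degrees, direct sums, tensor products, and duals. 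Once this compatibility is recorded, the canonical bijection and both homeomorphisms assemble into the assertion of the proposition.
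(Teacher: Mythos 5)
Your proposal is correct and follows essentially the same route as the paper: the bijection comes from Serre's description (\cite[Section 11.4]{Serre}) of $\Spec F\otimes_\ZZ K(G_{\mathrm{abs}})$ via the class-function isomorphism $\gamma$ (the paper phrases this as $c\mapsto P_{0,c}$, which is exactly the kernel of evaluation at $c$ that you describe), and the homeomorphisms come from the ring isomorphisms of Corollary \ref{cor: Ktheory}. Your only deviations are routing the middle homeomorphism through $\gamma^{-1}\circ\gamma'$ rather than directly through Corollary \ref{cor: Ktheory}, and spelling out the compatibility of the identifications, which the paper leaves implicit; both are harmless elaborations of the same argument.
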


\begin{proof}
If $c$ is a conjugacy class of $G_{\rm abs}$,
then $P_{0,c}$ (notation as in \cite[Section 11.4, Proposition 30]{Serre})
is an element of $\Spec F\otimes K_k(G_{\rm abs})$
and by loc. cit. this defines a bijection.
The isomorphisms of Corollary \ref{cor: Ktheory} induce homeomorphisms and thus,
bijections of spectra as stated.
\end{proof}

\begin{Remarks}
\label{rem: conjugacy classes}
\quad
\begin{enumerate}
\item The maximal abelian extension $\QQ\subset\QQ^{\rm ab}$ 
is generated by all roots of unity by the
Kronecker-Weber theorem.
Thus, if we have  $\QQ^{\mathrm{ab}}\subseteq F$, then we have a field
that works independent of the length of $G$.
In Definition \ref{def: conjugacy class}, we have chosen $F=\CC$
as this field may be more familiar than $\QQ^{\mathrm{ab}}$.

\item 
In \cite[Section 11.4]{Serre}, Serre described
$\Spec A\otimes_\ZZ K_k(G_{\rm abs})$, where $A=\ZZ[\zeta_g]$.
Using Corollary \ref{cor: Ktheory}, we obtain a homeomorphism
$$
 \Spec A\otimes_\ZZ K_k(G) \,\to\,\Spec A\otimes_\ZZ K_k(G_{\rm abs}).
$$
In particular, Serre's results from loc.cit. carry over to $A\otimes_\ZZ K_k(G)$.
For the purposes of this article, we are only interested
in the fibre over $0\in\Spec F$ with $F=\mathrm{Frac}(A)=\QQ(\zeta_g)$, 
that is, the Zariski open subset 
$\Spec F\otimes K_k(G)\subset\Spec A\otimes K_k(G)$.
\end{enumerate}
\end{Remarks}

\begin{Example}
 Let $G$ be the group scheme $\balpha_p$ or $\C_p$ 
 over the algebraically closed field $k$ of characteristic $p>0$.
 Then, $\Rep_k(G)$ is not semi-simple:
 the only simple $k$-linear representation of $G$
 is the trivial one-dimensional representation $\mathbb{1}$.
 Thus, $\mathbb{1}\mapsto1$ induces an isomorphism of rings
 $K_k(G)\cong\ZZ$ and
 $\Spec F\otimes_\ZZ K_k(G)$ consists of one point only.
 The approach to conjugacy classes in this subsection 
 may therefore lead to somewhat unexpected results if 
 $G$ is not linearly reductive.
\end{Example}

\begin{Proposition}
 Let $\varphi:G\to H$ be a morphism of finite and linearly reductive group schemes
 over $k$.
 Let $\varphi_{\rm abs}:G_{\rm abs}\to H_{\rm abs}$ be the induced homomorphism
 of their associated abstract groups.
 Let $F$ be a field that contains $\QQ(\zeta_g,\zeta_h)$, where $g$ (resp. $h$)
 denotes the length of $G$ (resp. $H$).
 \begin{enumerate}
 \item There maps $\varphi$ and $\varphi_{\mathrm{abs}}$ induce ring homomorphisms
 $K_k(H)\to K_k(G)$ and $K_{\overline{K}}(H_{\mathrm{abs}})\to K_{\overline{K}}(G_{\mathrm{abs}})$,
 respectively.
 We obtain a commutative diagram of continuous maps
 $$
 \xymatrix{
  \Spec F\otimes_\ZZ K_k(G) \ar[r]\ar[d] &\Spec F\otimes_\ZZ K_k(H)\ar[d]\\
  \Spec F\otimes_\ZZ K_{\overline{K}}(G_{\mathrm{abs}}) \ar[r] & \Spec F\otimes_\ZZ K_{\overline{K}}(H_{\mathrm{abs}}),
  }
 $$
 whose vertical arrows are the homeomorphisms 
from Proposition \ref{prop: conjugacy lift and bijection}.
\item Let  $G_{\mathrm{abs}}/\sim\to H_{\mathrm{abs}}/\sim$ be the map on conjugacy classes
induced by $\varphi_{\mathrm{abs}}$.
We obtain a commutative diagram of maps of sets
 $$
 \xymatrix{
  G_{\rm abs}/\sim \ar[r]\ar[d] &H_{\rm abs}/\sim\ar[d]\\
  \Spec F\otimes_\ZZ K_{\overline{K}}(G_{\mathrm{abs}}) \ar[r] & \Spec F\otimes_\ZZ K_{\overline{K}}(H_{\mathrm{abs}}),
  }
 $$
whose vertical maps are the bijections from  Proposition \ref{prop: conjugacy lift and bijection}.
\end{enumerate}
\end{Proposition}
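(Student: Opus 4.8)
The plan is to reduce both parts to the functoriality of the two constructions that produce the vertical (homeo)morphisms, namely the K-theoretic comparison of Corollary \ref{cor: Ktheory} and Serre's identification underlying Proposition \ref{prop: conjugacy lift and bijection}. First I would set up the horizontal maps. A homomorphism $\varphi:G\to H$ induces a restriction (pullback) functor $\mathrm{Rep}_k(H)\to\mathrm{Rep}_k(G)$; since it preserves direct sums and tensor products it descends to a ring homomorphism $K_k(H)\to K_k(G)$, and applying $\Spec(F\otimes_\ZZ-)$ reverses the arrow to give the top horizontal map. The same recipe applied to $\varphi_{\mathrm{abs}}:G_{\mathrm{abs}}\to H_{\mathrm{abs}}$ produces the bottom horizontal map. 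Continuity is then automatic, since all the rings in sight are finite over $F$ and the spectra are finite and discrete.

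For the commutativity in (1) the essential point is that the equivalences of Proposition \ref{prop: liftingrepresentation} are natural in the group scheme, and I would establish this through the functoriality of the canonical lift. The connected--\'etale decomposition and the reduction $G_{\mathrm{red}}\to G$ are functorial, so $\varphi$ respects the semi-direct product splitting and induces $\varphi^\circ:G^\circ\to H^\circ$ and $\varphi^{\et}:G^{\et}\to H^{\et}$. By rigidity of diagonalisable group schemes, $\varphi^\circ$ lifts uniquely to the (unique) lifts over $W(k)$, while $\varphi^{\et}$ lifts uniquely since \'etale group schemes deform uniquely together with their morphisms; assembling these yields a unique morphism $\calG_{\mathrm{can}}\to\calH_{\mathrm{can}}$ compatible with the splittings, recovering $\varphi_{\mathrm{abs}}$ after base change to $\overline{K}$ under Lemma \ref{lem: lrequivalence}. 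Because restriction along $\varphi$ lifts to restriction along $\calG_{\mathrm{can}}\to\calH_{\mathrm{can}}$ and specialises correctly, the restriction functors intertwine the equivalences of Proposition \ref{prop: liftingrepresentation}. Passing to K-groups, the square
$$
\xymatrix{
 K_k(H)\ar[r]\ar[d] & K_k(G)\ar[d]\\
 K_{\overline{K}}(H_{\mathrm{abs}})\ar[r] & K_{\overline{K}}(G_{\mathrm{abs}})
}
$$
commutes, the vertical maps being the isomorphisms of Corollary \ref{cor: Ktheory}; applying $\Spec(F\otimes_\ZZ-)$ gives (1).

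For (2) I would argue by unwinding Serre's bijection. Under the isomorphism $\gamma$ of Section \ref{subsec: second approach}, the point of $\Spec F\otimes_\ZZ K_{\overline{K}}(G_{\mathrm{abs}})$ attached to a conjugacy class $c$ is the maximal ideal given by evaluation of characters at $c$, i.e. $[\rho]\mapsto\chi_\rho(g_c)$ for a representative $g_c$. The bottom horizontal map is dual to restriction, and for $\sigma\in\mathrm{Rep}_{\overline{K}}(H_{\mathrm{abs}})$ one has the elementary identity $\chi_{\sigma\circ\varphi_{\mathrm{abs}}}(g)=\chi_\sigma(\varphi_{\mathrm{abs}}(g))$. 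Hence evaluation at $g$ pulls back to evaluation at $\varphi_{\mathrm{abs}}(g)$, which is exactly the statement that the bottom map carries the point attached to $[g]$ to the point attached to $[\varphi_{\mathrm{abs}}(g)]$; this is the desired commutativity.

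The hard part will be the naturality claim inside (1): Proposition \ref{prop: liftingrepresentation} only asserts the equivalences for a fixed $G$, so the real work is to promote them to a construction compatible with a morphism $\varphi$. Everything hinges on the unique functorial lifting of $\varphi$ to the canonical lifts, which rests on the rigidity of diagonalisable group schemes and the unique deformation of \'etale morphisms; granting this (as developed in \cite{LMM}), Part (2) reduces to the routine character computation above.
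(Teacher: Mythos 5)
Your proposal is correct and follows essentially the same route as the paper: restriction along $\varphi$ induces the ring homomorphisms on K-theory, and commutativity of the second square is the observation that $\varphi_{K,\mathrm{abs}}^{-1}(P_{0,g})=P_{0,\varphi_{\mathrm{abs}}(g)}$ in Serre's notation, which is exactly your character identity $\chi_{\sigma\circ\varphi_{\mathrm{abs}}}(g)=\chi_\sigma(\varphi_{\mathrm{abs}}(g))$. The only difference is that the paper leaves the compatibility in part (1) ``to the reader,'' whereas you actually sketch it via the functoriality of the canonical lift (rigidity of the diagonalisable part, unique lifting of the \'etale part); that is a welcome addition rather than a deviation.
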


\begin{proof}
Clearly, $\varphi$ induces a morphism $K_k(H)\to K_k(G)$ of rings since every 
$H$-representation becomes a $G$-representation via $\varphi$.
Similarly, $\varphi_{\mathrm{abs}}$ induces a ring homomorphism 
$\varphi_{K,\mathrm{abs}}\,:\,K_{\overline{K}}(H_{\mathrm{abs}})\to K_{\overline{K}}(G_{\mathrm{abs}})$.
We leave it to the reader to check the compatibility of these maps with the homeomorphisms
of Proposition \ref{prop: conjugacy lift and bijection}.

To check commutativity of the second diagram, let $g\in G_{\rm abs}$.
With the notations and definitions of \cite[Section 11.4]{Serre}, it is easy to see that
we have 
$$
 P_{0,\varphi_{\mathrm{abs}}(g)} \,=\,\varphi_{K,\mathrm{abs}}^{-1}(P_{0,g}) \,=\,
 \varphi_{K,\mathrm{abs}}^\sharp(P_{0,g}),
$$
where 
$$
\varphi_{K,\mathrm{abs}}^\sharp \,:\,\Spec F\otimes K_{\overline{K}}(G_{\mathrm{abs}}) \,\to\, 
\Spec F\otimes K_{\overline{K}}(H_{\mathrm{abs}})
$$
is the induced map on spectra.
Since the image of the conjugacy class $[g]$ of $G_{\mathrm{abs}}$ is the
conjugacy class $[\varphi_{\mathrm{abs}}(g)]$ of $H_{\mathrm{abs}}$, the assertion follows.
\end{proof}

\subsection{Third approach: the scheme of conjugacy classes}
\label{subsec: scheme of conjugacy classes}
Just as group schemes generalise the notion of a group,
one could try to replace the \emph{set} of conjugacy classes 
by a suitable notion of \emph{scheme} of conjugacy classes.
More precisely, let $G$ be a finite group scheme over $k$
and let $\underline{\mathrm{Aut}}(G)$ be the automorphism group
scheme of $G$.
For every scheme $T\to\Spec k$, we have the set $G(T)$ of $T$-valued
points of $G$ and a conjugation action of $G(T)$ on $G(T)$.
This induces a morphism of schemes $G\to\underline{\mathrm{Aut}}(G)$.
We will say that two elements of $G(T)$ are \emph{equivalent} if they differ
by such an automorphism and we denote the resulting equivalence relation by
$\sim$.

We obtain a functor from the category of schemes over $k$ to sets
$$
\begin{array}{cccccc}
\underline{{\rm Conj}}_G &:& ({\rm Schemes}/k) &\to& ({\rm Sets})\\
&& T &\mapsto& G(T)/\sim &.
\end{array}
$$
This functor should somehow represent the
conjugacy classes of $G$ in the sense of schemes.

\begin{Proposition}
 Let $G$ be a finite group scheme over $k$.
 Then, the functor $\underline{{\rm Conj}}_G$ is representable by a 
 scheme ${\rm Conj}_G$, which is finite over $\Spec k$.
\end{Proposition}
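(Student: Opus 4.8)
The plan is to represent $\underline{\mathrm{Conj}}_G$ by the categorical quotient of $G$ by its conjugation action, and the first step is to exhibit the candidate. Since $G$ is finite over $k$ it is affine, say $G=\Spec A$ with $A=H^0(G,\OO_G)$ a finite-dimensional commutative Hopf algebra. The conjugation morphism $c\colon G\times_k G\to G$, $(g,x)\mapsto gxg^{-1}$, corresponds to a $k$-algebra homomorphism $\alpha\colon A\to A\otimes_k A$, and I would form the invariant subalgebra
$$
 A^G \,:=\, \{\, a\in A \,:\, \alpha(a)=1\otimes a \,\}
$$
and set $\mathrm{Conj}_G:=\Spec A^G$. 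As a $k$-subspace of the finite-dimensional algebra $A$, the ring $A^G$ is again finite-dimensional, so $\mathrm{Conj}_G$ is automatically finite over $\Spec k$; this already yields the asserted finiteness, independently of the representability.

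Next I would construct the comparison morphism and identify $\mathrm{Conj}_G$ as a quotient. For every $k$-scheme $T$, restriction along the inclusion $A^G\hookrightarrow A$ sends a point $x\in G(T)$ to a point of $\mathrm{Conj}_G(T)$, and since $A^G$ consists precisely of the conjugation-invariant functions, two conjugate points have the same image. This produces a natural transformation
$$
 \theta\colon \underline{\mathrm{Conj}}_G\longrightarrow h_{\mathrm{Conj}_G},
 \qquad \theta_T\colon G(T)/\!\sim\ \to\ \mathrm{Conj}_G(T).
$$
That $A^G$ is the equaliser of $\alpha$ and $a\mapsto 1\otimes a$ shows, by the standard theory of quotients of affine schemes by finite group schemes, that $\mathrm{Conj}_G$ is the categorical quotient of $G$ by its conjugation action; moreover $A$ is finite over $A^G$, so the projection $G\to\mathrm{Conj}_G$ is finite and its geometric fibres are exactly the conjugation orbits. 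The whole point is then to upgrade this categorical/geometric quotient to a \emph{functorial} one, i.e.\ to prove that $\theta$ is an isomorphism.

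The main obstacle is exactly this last upgrade, and it is where I expect the real work to lie. Proving $\theta_T$ bijective for arbitrary test schemes $T$ means showing, with no auxiliary base change, both that every $T$-valued point of $\mathrm{Conj}_G$ lifts to a $T$-valued point of $G$ (surjectivity) and that two points of $G(T)$ with equal image are conjugate by a genuine element of $G(T)$ (injectivity); each must cope with the failure of $A$ to be flat over $A^G$ for non-free actions in characteristic $p$. Equivalently, since $h_{\mathrm{Conj}_G}$ is an fppf sheaf, verifying that $\theta$ is an isomorphism forces one to check that the naive orbit presheaf $T\mapsto G(T)/\!\sim$ is \emph{already} a sheaf, which is the delicate descent point. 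I would attack this by reducing to Artinian test rings — legitimate because everything is finite over the field $k$ — and by splitting off the étale part via the decomposition $G\cong G^\circ\rtimes G^{\et}$, where over the étale factor $\theta$ is just the classical quotient of a finite group by conjugation, while the finiteness of $G^\circ$ controls the infinitesimal contribution. It is precisely the finiteness of $G$ over $k$, rather than any formal property of the construction of $\mathrm{Conj}_G$, that I expect to rescue representability of the naive functor.
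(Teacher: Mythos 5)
Your candidate scheme is exactly the paper's: the paper sets $V:=H^0(G,\calO_G)$, views $\underline{\mathrm{Conj}}_G$ as the quotient functor of the adjoint (conjugation) action of $G$ on $\Spec V$, and represents it by the spectrum of the invariant ring $\Spec V^G$; the finiteness over $\Spec k$ is, as you say, immediate from $V^G\subseteq V$. The divergence is in what happens next. The paper's proof closes by invoking, in one line, the representability of the quotient of an affine scheme by a finite group scheme via its invariant ring. You, by contrast, correctly isolate the identification $\theta:\underline{\mathrm{Conj}}_G\to h_{\mathrm{Conj}_G}$ as the actual content of the statement --- and then stop, offering only a programme (reduction to Artinian test rings, splitting off $G^{\et}$) in place of an argument. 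Since representability of the \emph{functor} is the entire assertion of the proposition, what you have written establishes the existence and finiteness of a candidate but not the theorem; this is a genuine gap, not a presentational one.

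Moreover, the step you defer is not bookkeeping that the proposed d\'evissage would dispatch. Take $G=\balpha_p\rtimes_\varphi\bmu_p$ with $\varphi$ nontrivial and $T=\Spec k[\epsilon]/(\epsilon^2)$. Writing elements as pairs $(a,u)$ with $(a,u)(b,v)(a,u)^{-1}=(u^jb+(1-v^j)a,\,v)$, every point of $G(T)$ has $a,\,b\in\epsilon k$ and $u,\,v\in 1+\epsilon k$, so the correction term $(1-v^j)a$ lands in $(\epsilon^2)=0$ and conjugation acts trivially on $G(T)$; thus $G(T)/\!\sim$ is two-dimensional over $k$. On the other hand the invariant ring of the adjoint action is $k[y]/(y^p-1)$, whose $T$-points form a one-dimensional space, so $\theta_T$ is not injective. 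Hence the naive orbit presheaf is not the functor of points of $\Spec V^G$ for general finite $G$, and no reduction to Artinian rings or to the \'etale quotient will make $\theta$ an isomorphism; a completed proof must either reinterpret the quotient (categorically, or after sheafification --- the reading under which the paper's one-line identification goes through) or restrict the class of group schemes. As it stands, the representability claim is absent from your argument, and the route you sketch for supplying it runs into this obstruction.
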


\begin{proof}
Set $V:=H^0(G,\calO_G)$ and 
let $\rho_{{\rm ad}}:G\to \GL(V)$ be the adjoint representation of $G$
(see Appendix \ref{subsec: adjoint}).
Then, the functor $\underline{\mathrm{Conj}}_G$ can be rephrased
as the functor that associates to each $T\to\Spec k$ the quotient
of $V\times\OO_T$ modulo $G(T)$.
This amounts to representing the quotient $V/G$ by a scheme.
Since $V$ is a vector space and thus, can be identified with an affine scheme,
and since $G$ is a finite group scheme, this
quotient is representable by a scheme.
In fact, it is representable by the spectrum of the invariant
ring $\Spec V^G$.
\end{proof}

\begin{Definition}
 $\mathrm{Conj}_G$ is called the \emph{scheme of conjugacy classes}
 of $G$.
\end{Definition}

\begin{Remark}
\label{rem: adjoint}
 If $\rho_{\mathrm{ad}}:G\to \GL(H^0(G,\calO_G))$ is the adjoint representation, 
 then the previous proof shows that the length of $\mathrm{Conj}_G$ over $\Spec k$ 
 is equal to the dimension
 of the maximal trivial subrepresentation of $\rho_{\mathrm{ad}}$.
 If $G_{\rm abs}$ is a finite group or if $G$ is a finite and \'etale group scheme 
 over $\Spec k$, then we gave an explicit description of this maximal trivial 
 subrepresentation of $\rho_{\mathrm{ad}}$ in Remark \ref{rem: adjoint rep group}.
\end{Remark}

\begin{Examples}
\label{ex: conj scheme}
\quad 
\begin{enumerate}
\item If $G$ is \'etale over $k$, then $G$ is isomorphic to the constant
group scheme associated to $G_{\mathrm{abs}}:=G(k)$.
In this case, $\mathrm{Conj}_G$ is a disjoint union
of copies of $\Spec k$, one copy for each conjugacy class of the
abstract group $G_{\mathrm{abs}}$.
\item If $G$ is commutative, then the adjoint representation is trivial
and thus, $\mathrm{Conj}_G$ is isomorphic to the scheme underlying $G$.
For example, if $G=\balpha_p$ or $G=\bmu_p$, then 
$\mathrm{Conj}_G$ is a non-reduced scheme of length $p$
with reduction $(\mathrm{Conj}_G)_{\mathrm{red}}\cong\Spec k$.
\end{enumerate}
\end{Examples}

In characteristic zero, all finite group schemes are \'etale and 
Case (1) applies.
However, in characteristic $p>0$, the second example shows that 
for non-reduced group schemes $\mathrm{Conj}_G$ may also be non-reduced.
The reduction $(\mathrm{Conj}_G)_{\mathrm{red}}$ is related to our
first approach to conjugacy classes: 

\begin{Proposition}
\quad
 \begin{enumerate}
 \item A morphism $G\to H$ of finite group schemes over $k$ induces
a morphism $\mathrm{Conj}_G\to \mathrm{Conj}_H$ of schemes over $k$.
 \item Assume $p>0$ and let $G\to G^{\et}$ be the maximal \'etale quotient
 of the finite group scheme $G$ over $k$.
 Then, reduction identifies $G_{\mathrm{red}}$ with $G^{\et}$ and
 the induced natural inclusion $G^{\et}\to G$ induces an isomorphism
 $$
  \mathrm{Conj}_{G^{\et}} \,\to\, \left( \mathrm{Conj}_G \right)_{\mathrm{red}}
 $$
 of schemes over $k$.
 In particular, $(\mathrm{Conj}_G)_{\mathrm{red}}$ is 
 a disjoint union of copies of $\Spec k$ with one copy 
 for each conjugacy class of the abstract group
 $G_{\mathrm{abs}}:=G(k)=G^{\et}(k)$.
 \end{enumerate}
\end{Proposition}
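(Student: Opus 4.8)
The plan is to handle both parts through the functor of points, using the representability of $\underline{\mathrm{Conj}}_G$ established above, together with the description of $\mathrm{Conj}_G$ in the étale case and the invariant-ring presentation $\mathrm{Conj}_G=\Spec H^0(G,\calO_G)^G$. For part (1) I would argue by Yoneda. A homomorphism $\varphi\colon G\to H$ induces for every $k$-scheme $T$ a group homomorphism $G(T)\to H(T)$, and since $\varphi$ commutes with conjugation, $\varphi(g_1g_2g_1^{-1})=\varphi(g_1)\varphi(g_2)\varphi(g_1)^{-1}$, this descends to a map of sets $G(T)/\!\sim\,\to\,H(T)/\!\sim$ that is natural in $T$. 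This is a natural transformation $\underline{\mathrm{Conj}}_G\to\underline{\mathrm{Conj}}_H$, hence by representability corresponds to a unique morphism of schemes $\mathrm{Conj}_G\to\mathrm{Conj}_H$. (On coordinate rings this is simply the statement that $\varphi^*\colon H^0(H,\calO_H)\to H^0(G,\calO_G)$ is equivariant for the conjugation actions, with $G$ acting on the source through $\varphi$, so that it restricts to a homomorphism $H^0(H,\calO_H)^H\subseteq H^0(H,\calO_H)^G\to H^0(G,\calO_G)^G$ of invariant subalgebras.)

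For part (2) I would first recall that over the perfect field $k$ the reduction $G_{\mathrm{red}}$ is an étale subgroup scheme mapping isomorphically onto $G^{\et}$, which gives the canonical closed immersion $G^{\et}\cong G_{\mathrm{red}}\hookrightarrow G$ splitting the connected-étale sequence \eqref{connected etale}. Applying part (1) to this inclusion yields a morphism $\mathrm{Conj}_{G^{\et}}\to\mathrm{Conj}_G$. Since $G^{\et}$ is étale, $\mathrm{Conj}_{G^{\et}}$ is a disjoint union of copies of $\Spec k$, in particular reduced, so by the universal property of reduction the morphism factors through a morphism $f\colon\mathrm{Conj}_{G^{\et}}\to(\mathrm{Conj}_G)_{\mathrm{red}}$.

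To show that $f$ is an isomorphism I would observe that both its source and target are reduced and finite over the algebraically closed field $k$, hence each is a finite disjoint union of copies of $\Spec k$; for such schemes it suffices to check that $f$ is bijective on $k$-points. Using representability I would compute $\mathrm{Conj}_{G^{\et}}(k)=G^{\et}(k)/\!\sim$ and $(\mathrm{Conj}_G)_{\mathrm{red}}(k)=\mathrm{Conj}_G(k)=G(k)/\!\sim$, where in each case $\sim$ is conjugation by the relevant group of $k$-points. The connected-étale sequence gives $G^{\et}(k)=G(k)$, and under this identification the two conjugation relations agree, so $f$ induces the identity on the set of conjugacy classes of $G_{\mathrm{abs}}:=G(k)=G^{\et}(k)$ and is therefore an isomorphism. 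The concluding description of $(\mathrm{Conj}_G)_{\mathrm{red}}$ then follows by transporting the étale case through $f$.

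I expect the one genuinely delicate point to be the upgrade from a bijection on $k$-points to an isomorphism of schemes: this step is legitimate here only because both schemes have already been cut down to be reduced and finite, so that each is literally a finite set of reduced $k$-points, and it would fail for $\mathrm{Conj}_G$ itself, which is typically non-reduced (for instance $\mathrm{Conj}_{\bmu_p}$ has length $p$). This is exactly why one must pass to $(\mathrm{Conj}_G)_{\mathrm{red}}$ before comparing. A secondary point demanding care is the identification of $\mathrm{Conj}_G(k)$ with the honest quotient $G(k)/\!\sim$, rather than some coarser or finer set, which rests on evaluating the representing property at $T=\Spec k$ and on the fact that reduction leaves the $k$-points unchanged.
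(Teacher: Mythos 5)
Your proof is correct and follows essentially the same route as the paper's: part (1) via functoriality of $T\mapsto G(T)/\!\sim$ and Yoneda, and part (2) by factoring $\mathrm{Conj}_{G^{\et}}\to\mathrm{Conj}_G$ through the reduction and then checking bijectivity on $k$-points of reduced finite schemes over an algebraically closed field. The only cosmetic difference is that the paper additionally uses the retraction $G\to G^{\et}$ to obtain $\mathrm{Conj}_{G^{\et}}\to(\mathrm{Conj}_G)_{\mathrm{red}}\to\mathrm{Conj}_{G^{\et}}$ with identity composite, whereas you verify the bijection on $k$-points directly from $G(k)=G^{\et}(k)$.
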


\begin{proof}
We first prove Claim (1).
If $G_{\rm abs}\to H_{\rm abs}$ is a homomorphism of groups,
then we get a well-defined induced map of conjugacy classes.
Thus, if $G\to H$ is as in (1) and if $T$ is a scheme over $k$,
then we get a well-defined map $(G(T)/\sim)\to (H(T)/\sim)$.
This induces a morphism of functors
$\underline{\mathrm{Conj}}_G\to\underline{\mathrm{Conj}}_H$
and thus, a morphism of schemes
$\mathrm{Conj}_G\to\mathrm{Conj}_H$.

Let $G$ be as in Claim (2).
Let $G_{\mathrm{red}}\to G$ be the reduction, which is a morphism
of group schemes.
We thus obtain canonical homomorphisms
$G^{\et}\to G\to G^{\et}$ of group schemes over $k$, which
(by Claim (1)) induce morphisms of their associated schemes of 
conjugacy classes
$$
  \mathrm{Conj}_{G^\et} \,\to\, \mathrm{Conj}_G \,\to\, \mathrm{Conj}_{G^\et}.
$$
The composition is the identity. 
Since $\mathrm{Conj}_{G^{\et}}$ is reduced, we obtain a factorisation
\begin{equation}
\label{eq: conj}
  \mathrm{Conj}_{G^\et} \,\to\, \left(\mathrm{Conj}_G\right)_{\mathrm{red}} \,\to\, 
  \mathrm{Conj}_{G^\et}.
\end{equation}
All these schemes are reduced and finite over the algebraically closed
field $k$.
Thus, all of them are finite disoint unions of copies of $\Spec k$.
To prove that the morphisms in \eqref{eq: conj} are isomorphisms, it
suffices to check that the induced maps on $k$-rational points
are bijections.
This follows easily from the fact that
the maps $G_{\mathrm{red}}(k)\to G(k)\to G^{\et}(k)$ are 
bijections.
\end{proof}

If $G$ is moreover linearly reductive, 
then the length of $\mathrm{Conj}_G$ is related to the approach to conjugacy classes
from Section \ref{subsec: second approach}.

\begin{Proposition}
 Let $G$ be a finite and linearly reductive group scheme over $k$,
 let $G_{\mathrm{abs}}$ be the abstract group associated to $G$, 
 and let $F$ be a field as in Proposition \ref{prop: conjugacy lift and bijection}.
 Then,  
 $$
  \mathrm{length}_k\, \mathrm{Conj}_G \,=\, | \Spec F\otimes_\ZZ K_k(G) |
 $$
 and this length agrees with the number of conjugacy classes
 of $G_{\mathrm{abs}}$.
\end{Proposition}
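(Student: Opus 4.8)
The plan is to prove the two asserted equalities separately, deducing the second almost for free and concentrating the work on the first. The equality
$$
 |\Spec F\otimes_\ZZ K_k(G)| \,=\, (\text{number of conjugacy classes of } G_{\mathrm{abs}})
$$
is simply the cardinality statement contained in Proposition \ref{prop: conjugacy lift and bijection}, whose canonical bijection identifies $\Spec F\otimes_\ZZ K_k(G)$ with the set of conjugacy classes of $G_{\mathrm{abs}}$. So the entire content lies in computing $\mathrm{length}_k\,\mathrm{Conj}_G$ and matching it with this number.

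First I would invoke Remark \ref{rem: adjoint}, which identifies $\mathrm{length}_k\,\mathrm{Conj}_G$ with the dimension of the maximal trivial subrepresentation of the adjoint representation $\rho_{\mathrm{ad}}\colon G\to\GL(V_{\mathrm{ad}})$. Since $G$ is linearly reductive, $\mathrm{Rep}_k(G)$ is semisimple, so this dimension is just the multiplicity of the trivial representation $\rho_0$ in $V_{\mathrm{ad}}$. The task thus reduces to showing that this multiplicity equals the number of conjugacy classes of $G_{\mathrm{abs}}$.

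The key step is to transport $\rho_{\mathrm{ad}}$ across the equivalence $\mathrm{Rep}_k(G)\to\mathrm{Rep}_\CC(G_{\mathrm{abs}})$ of Proposition \ref{prop: liftingrepresentation}. The point I would emphasize is that $\rho_{\mathrm{ad}}$ is not an arbitrary representation but is manufactured functorially from the Hopf algebra $A:=H^0(G,\calO_G)$ (via the dual $A^*$, as in Section \ref{subsec: adjoint}), and that this Hopf algebra lifts canonically: $A$ is the special fibre of the finite flat Hopf algebra $\mathcal{A}:=H^0(\mathcal{G}_{\mathrm{can}},\calO)$ over $W(k)$, whose base change to $\overline{K}$ governs the right-hand end of the equivalence. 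Because the adjoint construction commutes with reduction and base change of the Hopf algebra, $\rho_{\mathrm{ad}}$ is compatible with the canonical lift and hence corresponds to the adjoint representation of $G_{\mathrm{can},\overline{K}}$. By the lemma following Definition \ref{def: canonical lift}, $G_{\mathrm{can},\overline{K}}\cong\underline{G_{\mathrm{abs}}}$ is the constant group scheme, and for a constant group scheme the adjoint representation is exactly the conjugation representation of $G_{\mathrm{abs}}$ on its group algebra, as recorded in the Remarks of Section \ref{subsec: adjoint}. Thus $\rho_{\mathrm{ad}}$ of $G$ is matched with $\rho_{\mathrm{ad}}$ of $G_{\mathrm{abs}}$.

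Since the equivalence of Proposition \ref{prop: liftingrepresentation} is compatible with direct sums and simplicity, it preserves the multiplicity of the trivial representation; hence the multiplicity of $\rho_0$ in $V_{\mathrm{ad}}$ equals that of the trivial representation in the adjoint representation of $G_{\mathrm{abs}}$. By Remark \ref{rem: adjoint rep group}, the latter equals the number of conjugacy classes of $G_{\mathrm{abs}}$, and combining with the first paragraph gives both equalities. I expect the main obstacle to be the compatibility used in the third paragraph: the equivalence of Proposition \ref{prop: liftingrepresentation} is a priori only an abstract tensor equivalence, and one must genuinely check that it carries the two adjoint representations to one another. My plan is to settle this at the level of Hopf algebras — verifying that the canonical lift of $G$ induces a lift of $A$ as a Hopf algebra and that the adjoint construction is stable under the reduction and base-change maps relating $A$, $\mathcal{A}$, and $\mathcal{A}\otimes_{W(k)}\overline{K}$ — rather than appealing to any intrinsic tensor-categorical characterisation of $\rho_{\mathrm{ad}}$.
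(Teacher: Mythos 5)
Your proposal follows the paper's own proof essentially step for step: Remark \ref{rem: adjoint} reduces $\mathrm{length}_k\,\mathrm{Conj}_G$ to the multiplicity of $\mathbb{1}$ in $\rho_{\mathrm{ad}}$, semisimplicity plus the equivalence of Proposition \ref{prop: liftingrepresentation} transfer this multiplicity to the adjoint representation of $G_{\mathrm{abs}}$, and Remark \ref{rem: adjoint rep group} together with Proposition \ref{prop: conjugacy lift and bijection} complete the count. The only difference is that you explicitly flag, and propose to verify via Hopf-algebra lifting, the compatibility of $\rho_{\mathrm{ad}}$ with that equivalence --- a point the paper's proof simply asserts --- which is a reasonable refinement rather than a change of method.
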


\begin{proof}
By Remark \ref{rem: adjoint}, the length of $\mathrm{Conj}_k(G)$
is equal to the dimension of the 
largest trivial subrepresentation of $\rho_{\mathrm{ad}}$.
Since ${\rm Rep}_k(G)$ is semi-simple, this is the same as the
multiplicity of $\mathbb{1}$ in $\rho_{\mathrm{ad}}$.
Using the isomorphism from Proposition \ref{prop: liftingrepresentation},
this multiplicity is the same as the multiplicity of $\mathbb{1}$
in the adjoint representation of $G_{\mathrm{abs}}$.
This latter multiplicity is equal to the number of conjugacy classes of $G_{\mathrm{abs}}$,
see, for example, Remark \ref{rem: adjoint rep group}.
By Proposition \ref{prop: conjugacy lift and bijection},
this number is equal to the cardinality of $\Spec F\otimes_\ZZ K_k(G)$.
\end{proof}

\subsection{Fourth approach: via lifting to characteristic zero}
Another idea might be to use lifting to characteristic zero:
Let $k$ be an algebraically closed field of characteristic $p>0$, 
let $W(k)$ be the ring of Witt vectors of $k$, and let $K$ be field 
of fractions of $W(k)$.

\begin{Example}[Mumford--Oort]
Let $\varphi:\bmu_p\to\underline{\mathrm{Aut}}(\balpha_p)\cong\GG_m$ be a non-trivial
homomorphism and let $G:=\balpha_p\rtimes_\varphi\bmu_p$ be the
corresponding semi-direct product group scheme.
Then, $G$ is a non-commutative group scheme of length $p^2$ over $k$.
Oort and Mumford \cite[Introduction, Example (-B)]{MumfordOort}
(but see also \cite[page 266]{Oort}) showed that there does \emph{not}
exist a lift of $G$ to any extension of $W(k)$.
\end{Example}

In particular, one \emph{cannot} define
conjugacy classes by first lifting $G$ over some possibly ramified extension of
$W(k)$, then passing to the geometric generic fibre of the lift, which would
be the constant group associated to an abstract finite group $G_{\mathrm{abs}}$,
and finally use the conjugacy classes of $G_{\mathrm{abs}}$ as a replacement
for the conjugacy classes of $G$.
The reason is simply that lifts may not exist to start with.

In the following two cases, lifts do exist and we leave the straightforward
proofs of our assertions to the reader:

\begin{Remark}
Assume that $G$ is \'etale over $k$.
Then, there exists a unique flat lift of $G$ over $W(k)$, which is 
the constant group scheme associated to
$G_{\mathrm{abs}}:=G(k)$ over $W(k)$.
The constant scheme associated to the set 
of conjugacy classes of $G_{\mathrm{abs}}$
is the unique a flat lift of the scheme $\mathrm{Conj}_G$ over $W(k)$,
which is  \'etale over $W(k)$.
More precisely, it is the constant scheme associated to the
set of conjugacy classes of $G_{\mathrm{abs}}$ over $W(k)$.
\end{Remark}

\begin{Remark}
Assume that $G$ is linearly reductive.
Let $G_{\mathrm{abs}}$ abstract group associated to $G$
and recall that their representation categories are equivalent
by  Proposition \ref{prop: liftingrepresentation}.
Proposition \ref{prop: conjugacy lift and bijection}
shows that Definition \ref{def: conjugacy class}
is compatible with $G_{\rm abs}$. 

If $\calG\to \Spec W(k)$ is a flat lift of $G$ over $W(k)$, for example, the canonical lift
$\calG_{\mathrm{can}}$, then
$$
\Spec\, H^0(\calG,\calO_{\calG})^{\calG}\,\to\,\Spec W(k),
$$
where the invariants are taken with respect to the adjoint representation,
is a flat lift of $\mathrm{Conj}_G$ over $W(k)$, whose geometric generic
fibre is the constant scheme associated to the set of conjugacy classes of $G_{\mathrm{abs}}$.
(Flatness follows from the fact that the special and the geometric generic
fibre have the same length by Remark \ref{rem: adjoint}.)

The length of $\mathrm{Conj}_G$ is equal to the number
of conjugacy classes of $G_{\rm abs}$.
The length of $\mathrm{Conj}_G$ is at least
the number of $k$-rational points $\mathrm{Conj}_G(k)$
and in general not equal, since 
$\mathrm{Conj}_G$ may not be \'etale over $k$.
\end{Remark}

If $G$ is of length prime to $p$, then it is \'etale and linearly reductive
and in this case, all previous approaches yield essentially the same
notion of conjugacy class.
On the other hand, the examples of the above subsections show that
if $G$ is \'etale of length divisible by $p$ or if $G$ is linearly reductive but not
\'etale, then the various approaches of the above subsections usually
lead to different notions of conjugacy classes.

\subsection{Fifth approach: via adjoint representation}
Let $G_{\rm abs}$ be a finite group and let $\rho_{\mathrm{ad}}:G_{\mathrm{abs}}\to\GL(V_{\mathrm{ad}})$
be its adjoint representation over $k$.
By Remark \ref{rem: adjoint rep group}, the dimension of 
the largest trivial subrepresentation of $\rho_{\mathrm{ad}}$ 
is equal to the dimension of largest trivial quotient
representation  of $\rho_{\mathrm{ad}}$ (even if $p$ divides the order of $G_{\mathrm{abs}}$) and
these dimensions are equal to the number of conjugacy classes of $G_{\mathrm{abs}}$.
Unfortunately, these trivial sub- or quotient representations do not admit
\emph{canonical} decompositions into one-dimensional subspaces.
Thus, there is no canonical bijection between the conjugacy classes of $G_{\mathrm{abs}}$
with trivial sub- or quotient representations of $\rho_{\mathrm{ad}}$.

\begin{Remark}
Let $H:=k[G_{\mathrm{abs}}]$ be the group algebra with its usual Hopf algebra structure.
The set of group-like elements of $H$ recovers the group $G_{\mathrm{abs}}$,
see Example \ref{ex: group-like}.
This suggests to use the adjoint representation ${}^{\mathrm{ad}}H$ together with
the Hopf algebra structure of $H$ - in particular, its co-algebra structure -
to define a useful notion of conjugacy class.

As we have seen (somewhat implicitly) in Section \ref{subsec: scheme of conjugacy classes}
above, this works:
Let $G:=\Spec H^*$ be the constant group scheme associated to $G_{\mathrm{abs}}$.
Then, the topological space underlying $\mathrm{Conj}_G$ is the set of group-like elements of the 
Hopf algebra $H$ modulo the adjoint representation. 
This set can be identified with the set of conjugacy classes of $G_{\mathrm{abs}}$.
\end{Remark}

Unfortunately, the previous remark does not carry over to finite group schemes 
that are not \'etale, as the following example shows.

\begin{Example}
Let $G=\bmu_p$ or $G=\balpha_p$ over the algebraically closed field $k$
of characteristic $p>0$.
\begin{enumerate}
\item The adjoint representation $\rho_{\mathrm{ad}}$ of $G$ is trivial of dimension $p$,
but there is no canonical way to decompose it into one-dimensional subspaces.
This would suggest to have $p$ conjugacy classes, but without being able to distinguish them.
\item
Moreover, there is only one group-like element in the Hopf algebra $H^0(G,\OO_G)^*$
and the adjoint representation on this element is trivial as well.
This would suggest to have one conjugacy class only,
see also Example \ref{ex: conj scheme}.(2). 
\end{enumerate}
\end{Example}

\subsection{Sixth approach: via extended adjoint representation}
Given a finite-dimensional Hopf algebra $A$ over a field $k$,
we have the adjoint representation ${}^{\rm ad}A$, see Appendix \ref{subsec: adjoint}.
In Appendix \ref{subsec: quantum doubles}, we recalled the quantum double
$D(A)=(A^{\mathrm{op}})^*\bowtie A$.
In Appendix \ref{subsec: extended adjoint}, we recalled that ${}^{\mathrm{ad}}A$
can be extended to a representation of $D(A)$ on $A$, the
extended adjoint representation ${}^{\mathrm{Ad}}A$.

Generalising work of Witherspoon \cite{Witherspoon}, Cohen and Westreich \cite{CW}
defined the set of conjugacy classes of $A$ for a finite-dimensional 
semi-simple Hopf algebra over $\CC$ to be the set of 
simple subrepresentations of ${}^{\mathrm{Ad}}A$.
In this context, we also refer to the work of Jacoby \cite{Jacoby} and Zhu \cite{Zhu}.

If $G$ is a finite group scheme over $k$ with Hopf algebra $A:=H^0(G,\OO_G)$, then
$D(A)$ is semi-simple if and only if $D(A^*)$ is semi-simple if and only if
$G$ is of length prime to $p$, see Proposition \ref{prop: quantum properties}.
In particular, the (extended) adjoint representations
of $A$ or $A^*$ may not be semi-simple.
We will now study these representations and their relation to conjugacy classes.

\begin{Example}
Let $G$ be \'etale over $k$.
Then it is the constant group scheme
associated to $G_{\mathrm{abs}}:=G(k)$.
If $A:=H^0(G,\OO_G)$ is the associated Hopf algebra, then we have
$A^*\cong k[G_{\mathrm{abs}}]$.
As seen in Example \ref{ex: Jacoby completely reducible},
the extended adjoint representation ${}^{\mathrm{Ad}}(A^*)$ is semi-simple
and there is a natural bijection between conjugacy classes of $G_{\mathrm{abs}}$
and simple subrepresentations of ${}^{\mathrm{Ad}}(A^*)$.
We stress that this is also true if $p$ divides the order of $G_{\mathrm{abs}}$.
\end{Example}

\begin{Example}
Let $G$ be a finite and linearly reductive group scheme over $k$, let $G_{\mathrm{abs}}$
be the associated abstract finite group, and let $A:=H^0(G,\OO_G)$ be the associated Hopf
algebra.
Since $A$ is commutative, ${}^{\mathrm{ad}}A$ is trivial.
Using the description \eqref{eq: Jacoby} of ${}^{\mathrm{Ad}}A$ as an induced representation,
we can identify it with the dual of the regular representation 
$$
\rho_{\mathrm{reg}}^\vee\,:\, A^*\,\to\, \mathrm{End}(A)
$$
together with the trivial representation of $A$.
By Proposition \ref{prop: liftingrepresentation},
representations of $A^*$ can be identified with representations
of $\CC[G_{\mathrm{abs}}]$ and thus, representations of $G_{\mathrm{abs}}$.
Thus, we can decompose ${}^{\rm Ad}A$ like the dual of the regular representation 
of $G_{\mathrm{abs}}$:
the number of isotypical components of this representation is equal to
the number of conjugacy classes of $G_{\mathrm{abs}}$.
Thus, one can think of the conjugacy classes of $G_{\mathrm{abs}}$ as
being ``dual'' to these isotypical components similarly to
Appendix \ref{subsec: second approach}.
\end{Example}

The upshot of this discussion is the following:
Let $G$ be a finite group scheme over $k$ and let $A:=H^0(G,\OO_G)$ be the associated
Hopf algebra.
\begin{enumerate}
\item If $G$ is \'etale over $k$, then simple subrepresentations of ${}^{\mathrm{Ad}}(A^*)$ 
are in bijection with conjugacy classes of $G_{\mathrm{abs}}:=G(k)$.
\item  If $G$ is linearly reductive, then the isotypical components of ${}^{\mathrm{Ad}}A$ give a 
reasonably good definition for the ``dual'' of a conjugacy class.
\end{enumerate}
The following example shows that in general, neither ${}^{\mathrm{Ad}}A$ nor ${}^{\mathrm{Ad}}(A^*)$
leads to  a good approach toward the notion of a conjugacy class - at least none that is 
better than the ones already discussed.

\begin{Example}
Let $G$ be a finite and commutative group scheme over $k$ and let
$A:=H^0(G,\OO_G)$ be the associated Hopf algebra.
As a consequence of Example \ref{ex: commutative extended adjoint}, we have the
following.
\begin{enumerate}
\item If $G=\bmu_p$, then we have $A=k[\C_p]$.
Thus, ${}^{\mathrm{Ad}}A$ splits into the direct sum of $p$ pairwise non-isomorphic 
one-dimensional representations,
which correspond to the characters of $G$.
On the other hand, ${}^{\mathrm{Ad}}(A^*)$ is a non-trivial 
successive $p$-fold and non-split extension of the trivial representation $\mathbb{1}$,
whose semi-simplification is trivial of dimension $p$.
\item If $G=\C_p$, then we obtain the same as before with the r\^{o}les of
$A$ and $A^*$ interchanged.
\item If $G=\balpha_p$, then $A\cong A^*$ and 
${}^{\mathrm{Ad}}A\cong{}^{\mathrm{Ad}}(A^*)$
is a successive $p$-fold and non-split extension of the trivial
representation $\mathbb{1}$.
\end{enumerate} 
\end{Example}

\end{document}